\theoremstyle{plain}
\newtheorem{thm}{Theorem}[section]
\newtheorem{lem}[thm]{Lemma}
\newtheorem{prop}[thm]{Proposition}
\newtheorem{cor}[thm]{Corollary}
\newtheorem{prob}[]{Problem}
\theoremstyle{definition}
\newtheorem{de}[thm]{Definition}
\newtheorem{rem}[thm]{Remark}
\newtheorem{exam}[thm]{Example}
\newcommand{\Z}{\mathbb{Z}}
\newcommand{\modone}{\,\,(\textrm{mod}\,1)}
\newcommand{\R}{\mathbb{R}}
\newcommand{\N}{\mathbb{N}}
\newcommand{\X}{\mathbb{X}}
\newcommand{\T}{\mathbb{T}}
\newcommand{\U}{\mathcal{U}}
\newcommand{\V}{\mathcal{V}}
\newcommand{\W}{\mathcal{W}}
\newcommand {\ol}{\overline}
\newcommand{\ep}{\varepsilon}
\newcommand{\ra}{\rightarrow}
\newcommand{\id}{\mathrm{id}}
\DeclareMathOperator{\diam}{diam}
\DeclareMathOperator{\BD}{BD}
\DeclareMathOperator{\Aut}{Aut}
\numberwithin{equation}{section}
\begin{document}
\title{When all closed subsets are recurrent?}

\author{Jie Li}
\author{Piotr Oprocha}
\author{Xiangdong Ye}
\author{Ruifeng Zhang}
\address[J.~Li, X.~Ye]{Wu Wen-Tsun Key Laboratory of Mathematics, USTC, Chinese Academy of Sciences and
School of Mathematics, University of Science and Technology of China,
Hefei, Anhui, 230026, P.R. China}
\email{jiel0516@mail.ustc.edu.cn, yexd@ustc.edu.cn}
\address[P.~Oprocha] {AGH University of Science and Technology, Faculty of Applied Mathematics, al.~A. Mickiewicza 30, 30-059 Krak\'ow, Poland
	-- and -- 
	National Supercomputing Centre IT4Innovations, Division of the University of Ostrava,
	Institute for Research and Applications of Fuzzy Modeling,
	30. dubna 22, 70103 Ostrava,
	Czech Republic }
\email{oprocha@agh.edu.pl}
\address[R.~Zhang] {School of Mathematics, Hefei University of Technology, Hefei, Anhui, 230009, P.R. China}
\email{rfzhang@mail.ustc.edu.cn}
\subjclass[2010]{54B20, 54H20, 37B20, 37B40}
\keywords{Hyperspace, equicontinuous, uniformly rigid, recurrent,  distal, minimal, entropy}

\begin{abstract}
In the paper we study relations of rigidity, equicontinuity and pointwise recurrence between
a t.d.s. $(X,T)$ and the t.d.s. $(K(X),T_K)$ induced on the hyperspace
$K(X)$ of all compact subsets of $X$, and provide some characterizations.

Among other examples, we construct a minimal, non-equicontinuous, distal and uniformly rigid t.d.s. and a t.d.s. which has dense small periodic sets but does not have dense distal points, solving that way open questions existing in the literature.
\end{abstract}

\date{\today}

\maketitle

\section{Introduction}

A \textit{topological dynamical system}, referred to more succinctly as just a t.d.s. is a pair $(X, T)$,
where $X$ is a compact metric space and $T\colon X \to X$ is a continuous surjective map from $X$ into
itself. A well known result of Birkhoff states that every t.d.s. has a recurrent point, i.e. there
are $x\in X$ and a sequence $n_i\ra\infty$ with $T^{n_i}x\ra x$. In \cite{KW} the question when
all points are recurrent was discussed by
Katznelson and Weiss, and transitive non-minimal systems
with all points are recurrent were constructed (see also \cite{AAB}). Strengthening the notion of pointwise recurrence,
Glasner and Maon \cite{GM89} introduced the notions of $n$-rigidity, weak
rigidity, rigidity and uniform rigidity, and showed  among other things that every rigid system
has zero topological entropy. Weiss in \cite{W95} proved that in fact positive $2$-rigidity
implies zero topological entropy. Note that this result is also followed by some conclusion
in \cite{BHR}, and it is an open question that if $2$-rigidity implies zero topological entropy when $(X,T)$ is a homeomorphism.

\medskip
Sensitivity is a key notion in the definition of Devaney's chaos.
When considering the system which is not sensitive, the notions of equicontinuous point and
almost equicontinuous system appear naturally. Similarly to an equicontinuous system, each almost
equicontinuous system is uniformly rigid,
see \cite{AAB, GW93}. A more striking result due to Hochman \cite{Hoch} is that each zero-entropy
ergodic measure preserving transformation is isomorphic to a positively $2$-rigid system.  Thus, from the point of view of measure theory, the class of rigid systems is very large, making then an important object of study in the framework of theory of dynamical systems.

\medskip
A t.d.s. $(X,T)$ induces in a natural way the system $(K(X),T_K)$ on the hyperspace of all compact sets (more details on particular hyperspaces can be found in \cite{Nad92}).
Bauer and Sigmund \cite{BS75} initiated a systematic study on the connections between
dynamical properties of $(X,T)$ and $(K(X),T_K)$. Particularly, they showed that $(X,T)$
is equicontinuous (resp. weakly mixing) if and only if so is $K(X)$, and provided an example which is distal but
$(K(X),T_K)$ is not distal.
Banks later in \cite{Ban05} showed that the transitivity of $(K(X),T_K)$ is coincident to its weak mixing property.
In \cite{LYY13} the authors further exploit these connections, and focus on periodic systems, $P$-systems,  $M$-systems, $E$-systems and disjointness.

\medskip
Following Bauer and Sigmund \cite{BS75} and Katznelson and Weiss \cite{KW} we study the question when
all closed subsets are recurrent. It turns out that $(K(X),T_K)$ is pointwise minimal if and only if $(X,T)$
is equicontinuous (see Theorem \ref{p.w. min.}), and that $K(K(X))$ is pointwise recurrent if and only
if $K(X)$ is weakly rigid if and only if $(X,T)$ is uniformly rigid (see Theorem \ref{uni. rigid} and
Corollary \ref{two-sided UR}). Specially, $K(X)$ is pointwise recurrent if and only if $(X,T)$ is
uniformly rigid whenever $X$ has a countable cardinality (see Theorem \ref{countable}). It is also shown
that the topological entropy of $(K(X),T_K)$ is zero when $K(X)$ is pointwise recurrent, i.e. 1-rigid (see Corollary \ref{zero ent.}).
Moreover, for a class of minimal distal systems and $n\in\N$ equivalent conditions when $K(X)$ is
$n$-rigid are given (see Corollary \ref{n-rigid}), and an example of a minimal distal non-equicontinuous
uniformly rigid t.d.s. is constructed (see Example \ref{ex2}).

Systems whose hyperspaces have dense recurrent points are considered in Section~ \ref{weak}. Among other things,
an example of a t.d.s. $(X,T)$ such that $(K(X),T_K)$ has dense distal points and $(X,T)$ does not have
the property is displayed (see Theorem~ \ref{K(X) P-system}). It answers a question in the positive left open in \cite{LYY13}.
We note that it was shown in \cite{LYY13} that a weakly mixing system $(X,T)$ satisfying
that $(K(X),T_K)$ has a dense set of distal points is disjoint from all minimal systems; and the fact
that a t.d.s. $(X,T)$ with a dense set of distal points is disjoint from all minimal systems was obtained previously
in \cite{O10, DSY12}.

\medskip

We note that when writing the final version of the paper we found a preprint \cite{AAN}, where the authors
study the dynamical properties on the induced space $K(X)$. Among other things, they showed that (\cite[Theorem 5.4]{AAN})
$(X,T)$ is equicontinuous if and only if $K(X)$ is distal using a different method.

\section{Preliminaries}

Throughout this paper, denote by $\N$, $\Z_+$, $\Z$ and $\R$ the sets of positive integers,
nonnegative integers, integers and real numbers, respectively.

For a t.d.s. $(X, T)$ and $x\in X$, let $\textrm{orb}(x,T)=\{T^m x\colon m\in\Z_+\}$ be the
(positive) orbit of $x$. Fix $n\in \N$, write $(X^n, T^{(n)})$ as the $n$-fold product system
$(X\times X\times\dots\times X,T\times T\times\dots\times T)$, and set $\Delta_n=\{(x,x,\dots, x)\in X^n\colon x\in X\}$.

\subsection{Recurrence and its stronger forms}\label{sec:recurrencestronger}

Let $(X,T)$ be an invertible t.d.s. A point $x\in X$ is said to be \textit{positively recurrent}
(resp. \textit{negatively recurrent}) if there exists a sequence $n_i\ra +\infty$
(resp. $n_i\ra -\infty$) such that $T^{n_i}x \ra x$. Denote by $\textrm{Rec}(T)$
(resp. $\textrm{Rec}(T^{-1})$) the set of all positively recurrent (resp. negatively recurrent)
points. We say that  $(X,T)$ is \textit{pointwise positively recurrent}
(resp. \textit{pointwise negatively recurrent}) if $\textrm{Rec}(T)=X$ (resp.
$\textrm{Rec}(T^{-1})=X$). Note that there exists an example of a t.d.s. which
is pointwise positively recurrent but not pointwise negatively recurrent
(see \cite{AGW07}). We say $x\in X$ is \textit{recurrent}
if $x$ is either positively recurrent or negatively recurrent; and $(X,T)$ is
\textit{pointwise recurrent} if each point is recurrent.

A t.d.s. $(X, T)$ is \textit{(topologically) transitive} if for any two non-empty open
sets $U$ and $V$, the transfer time set $N(U,V)=\{n\in\Z_+\colon \ U\cap T^{-n} V\neq \emptyset \}$
is infinite; is \textit{weakly mixing} if $(X\times X,T\times T)$ is transitive; and is
\textit{mildly mixing} if $(X\times Y,T\times S)$ is transitive for any transitive t.d.s.
$(Y,S)$. We call $x\in X$ is \textit{a transitive point} if its orbit closure
$\ol{\textrm{orb}(x,T)}=X$. Let $\textrm{Trans}(T)$ be the set of transitive points.
It is well known that the orbit closure of a recurrent point is transitive.

A t.d.s. $(X, T)$ is \textit{minimal} if $\mathrm{Trans}(T)=X$. Call $x \in X$ a
\textit{minimal point} or \textit{almost periodic point} if the subsystem
$(\ol{\mathrm{orb}(x, T)}, T)$ is minimal. We say that $x \in X$ is a
{\it periodic point} if $T^n x=x$ for some $n \in \N$; and $(X,T)$ is
\textit{pointwise periodic} if each point in $X$ is periodic. The set of
all periodic points (resp. minimal points) of $(X, T)$ is denoted by
$\textrm{P}(T)$ (resp. $\textrm{AP}(T)$). It is easy to see that for
a transitive system we have $\textrm{P}(T)\subset \textrm{AP}(T)\subset
\mathrm{Trans}(T)\subset \textrm{Rec}(T)$.

A pair $(x, y) \in X^2$ is called \textit{proximal} if there is a sequence
$n_i\ra +\infty$ such that $d(T^{n_i} x, T^{n_i} y)\ra 0$; and  \textit{regionally proximal}
if for each $\ep>0$ there are $x',y'\in X$ and $k\in \N$ with $d(x,x')<\ep,\ d(y,y')<\ep$ and
$d(T^k x', T^k y') <\ep$.  The subset consisting of all proximal (resp. regionally proximal)
pairs is denoted by $P(X, T)$ (resp. $Q(X, T)$). $x\in X$ is said to be \textit{a distal point}
if $x$ is only proximal to itself in its orbit closure; and be \textit{an equicontinuous point}
if for each $\ep>0$ there is $\delta>0$ such that $\diam T^n (B(x, \delta)) < \ep$ for each $n\geq 0$,
where $B(x, \delta)$ is the open ball centered at $x$ with radius $\delta$. Now we can say a
t.d.s. $(X,T)$ is \textit{distal} if all points are distal, or if $P(X,T)=\Delta_2$; and is
\textit{equicontinuous} if all points are equicontinuous.

Since $T$ is surjective, it is not hard to see that when $(X,T)$ is equicontinuous it is distal.
It is also not hard to see that if $T$ is equicontinuous then $Q(X,T^{-1})=\Delta_2$.
It was first proved by Veech \cite{V68}
that maximal equicontinuius factor is induced by the smallest closed equivalence relation containing $Q(X,T)$,
in particular $Q(X,T^{-1})=\Delta_2$ implies that $T^{-1}$ is equicontinuous.

\medskip
Another direction to strengthen the recurrence are various notions of rigidity (e.g. see \cite{GM89}).
Let $n\in\N$. An invertible t.d.s. $(X,T)$ is called \textit{$n$-rigid} if each $n$-tuple $(x_1,\dots,x_n)\in
(X^n,T^{(n)})$ is a recurrent point; \textit{weakly rigid} if $(X,T)$ is $n$-rigid for each $n\in\N$;
\textit{rigid} if there is $m_i\ra\infty$ such that $T^{m_i} \to \id$ pointwise, where $\id$ is
the identity map; and \textit{uniformly rigid} if there is $m_i\ra\infty$ such that $T^{m_i} \to \id$
uniformly on $X$. It is equivalent to say that for each $\ep>0$ there is $m\in\Z$ such that $d(T^m x,x)<\ep$
for each $x\in X$.
The same way  (when the map is not necessarily invertible) we can define \textit{positively $n$-rigid}
and \textit{positively weakly rigid} systems, replacing
recurrence by the positive recurrence in the definition. Also when defining rigid and uniformly rigid,
we can drop the assumption that $(X,T)$ is invertible.

It is known that a minimal equicontinuous system is uniformly rigid and there are minimal weakly mixing
uniformly rigid systems \cite{GM89}. A transitive system with an equicontinuous, transitive point is
called \textit{almost equicontinuous}, and such systems  are uniformly rigid which may be proximal,
see \cite{GW93}. A minimal rigid but not uniformly rigid system is constructed in \cite{K}.
A minimal distal system is weakly rigid, and the system $(X,T)$ defined by $T(x,y)=(x+\alpha,x+y)$
on $\T^2$ is not rigid, see \cite{GM89}.

It is clear that a minimal system is $1$-rigid, and it is easy to see that the Denjoy minimal system on the circle
is $1$-rigid but not $2$-rigid. It is an open question if for $n\ge 2$ there is a system which is $n$-rigid but not $n+1$-rigid,
though the general opinion is that such examples should exist.

\subsection{Factor and extension}

Let $(X, T)$ and $(Y,S)$ be two systems and $\pi\colon X\to Y$. We say $X$ is an \textit{extension}
of $Y$ or $Y$ is a \textit{factor} of $X$ if $\pi$ is continuous onto and interwines the actions,
i.e. $\pi\circ T=S\circ \pi$. In this case call $\pi$ a \textit{factor map}.

Let $(X,T)$ be a t.d.s. A self homeomorphism $\xi$ of $X$ is an automorphism of $(X,T)$ if it
commutes with $T$, i.e. $\xi \circ T=T\circ \xi$. We let $\Aut(X,T)$ be the collection of all
automorphisms of $(X,T)$. If $K$ is a compact subgroup of $\Aut(X,T)$, then the map $x\mapsto
Kx$ defines a factor map $\pi\colon(X,T)\to (Z,\widetilde{T})$ with $Z=X/K$ and $R_\pi
=\{(x,kx)\colon x\in X, k\in K\}$. Such an extension is called a \textit{group extension}.
A special group extension will be considered in this paper, and we refer it to be
\textit{skew product} \cite{Fur81}, i.e. for some t.d.s. $(Y,S)$ and compact group $G$,
form $X=Y\times G$ with $T(y,g)=(S y, \phi(y) g)$, where $\phi\colon Y\to G$ is a continuous map.

\subsection{Subset of integers}

Let $S$ be a subset of $\Z$ (resp. $\Z_+$). We say that $S$ is \textit{syndetic} (resp.
\textit{positively syndetic}) if it has a bounded gap, i.e. there is $N\in \N$
with $\{i,i+1,\dots, i+N\}\cap S\neq\emptyset$ for all $i\in \Z$ (resp. $i\in \Z_+$).

Let $\{p_i\}_{i=1}^\infty$ be an infinite sequence in $\N$. Set
$$FS(\{p_i\}_{i=1}^{\infty})=\{p_{i_1}+p_{i_2}+\dots+p_{i_n}\colon 1 \le i_1<i_2<\dots<i_n,\ n \in \N\}.$$
A subset $A\subset \N$ is said to be an \textit{$IP$-set} if it contains some
$FS(\{p_i\}_{i=1}^{\infty})$, and to be an \textit{$IP^*$-set} if it has non-empty intersection with any $IP$-sets.

\medskip
Let $(X, T)$ be a t.d.s. For a point $x \in X$ and open subsets $U, V \subset X$, we put the transfer times sets:
\begin{align*}
  N_T(x, U)& =\{n \in \Z_+\colon T^n x \in U\},\ \mbox{and} \\
   N_T(U,V) & =\{n\in \Z_+\colon U\cap T^{-n} V\neq\emptyset\}.
\end{align*}
When the acting map $T$ is clear from the context, we simply write $N(x,U)$ and $N(U,V)$.

The following characterizations are well known, see for instance \cite{Fur81}.
\begin{lem}
Let $(X, T)$ be a t.d.s. If $x\in X$ and $U$ is any neighborhood of $x$, then
\begin{enumerate}

  \item $x$ is positively recurrent if and only if $N(x, U)$ is an $IP$-set;

  \item $x$ is minimal if and only if $N(x, U)$ is a positively syndetic set;

  \item $x$ is distal if and only if $N(x, U)$ is an $IP^*$-set.

\end{enumerate}
\end{lem}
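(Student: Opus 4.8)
The three parts are of rather different flavour, and I would prove them separately: \emph{(2)} is a routine compactness argument, \emph{(1)} contains the one genuine construction, and \emph{(3)} is where one has to invoke the structure theory of the enveloping semigroup.

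\emph{Part (1).} The ``if'' direction is immediate: an $IP$-set $FS(\{p_i\}_{i=1}^{\infty})$ contains the strictly increasing partial sums $p_1<p_1+p_2<\cdots$, hence is infinite, so taking $U=B(x,1/k)$ yields $n_k\to\infty$ with $T^{n_k}x\to x$. For ``only if'' I would build $\{p_i\}$ recursively together with a decreasing sequence of open neighbourhoods $V_0\supseteq V_1\supseteq\cdots$ of $x$ contained in $U$, maintaining the invariant $T^{\sum_{i\in F}p_i}(V_n)\subseteq U$ for every $F\subseteq\{1,\dots,n\}$ (the case $F=\varnothing$ giving $V_n\subseteq U$). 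Having chosen $p_1<\dots<p_n$ and $V_n$, positive recurrence makes $N(x,V_n)$ infinite, so I pick $p_{n+1}>p_n$ with $T^{p_{n+1}}x\in V_n$; then for $\varnothing\ne F\subseteq\{1,\dots,n+1\}$ one gets $T^{\sum_{i\in F}p_i}x\in U$ by splitting on whether $n+1\in F$ and applying the invariant (to $V_n$, resp.\ to $T^{p_{n+1}}x\in V_n$), and finally one passes to a smaller $V_{n+1}\subseteq V_n$ by continuity of the finitely many maps $T^{\sum_{i\in F}p_i}$ at $x$. This produces $FS(\{p_i\}_{i=1}^{\infty})\subseteq N(x,U)$.

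\emph{Part (2).} If $x$ is minimal, set $M=\overline{\mathrm{orb}(x,T)}$; then $U\cap M$ is nonempty open in $M$, by minimality $M=\bigcup_{n\ge0}T^{-n}(U\cap M)$, and compactness gives $M=\bigcup_{n=0}^{N}T^{-n}(U)$ for some $N$. Since $T^mx\in M$ for all $m$, every block $\{m,\dots,m+N\}$ meets $N(x,U)$, so $N(x,U)$ is positively syndetic. Conversely, if $x$ is not minimal, $M$ contains a minimal subset $M'$ with $x\notin M'$; choose an open $U\ni x$ with $\overline{U}\cap M'=\varnothing$ and some $z\in M'$, and take $m_i\to\infty$ with $T^{m_i}x\to z$. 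For any fixed $N$, continuity gives $T^{m_i+j}x\to T^jz\in M'$ for $j=0,\dots,N$, so for large $i$ the block $\{m_i,\dots,m_i+N\}$ misses $N(x,U)$; hence $N(x,U)$ is not syndetic, which proves the contrapositive.

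\emph{Part (3), and the main obstacle.} Here I would combine two classical inputs: the Galvin--Glazer--Hindman correspondence (a set is an $IP$-set iff it belongs to some idempotent ultrafilter $p\in\beta\Z_+$, and each such $p$ induces the idempotent $w\mapsto p\text{-}\lim_n T^nw$ of the enveloping semigroup $E=E(X,T)$) and the Ellis theory of minimal left ideals of $E$ (existence of idempotents, the right-identity property $\ell u=\ell$ for $\ell$ in a minimal left ideal $L$ and $u$ an idempotent of $L$, and ``$z$ is a minimal point iff some idempotent of each minimal left ideal fixes $z$''). For ``$x$ distal $\Rightarrow N(x,U)$ is $IP^*$'' (with $U$ open): if some $IP$-set $A$ missed $N(x,U)$, fix an idempotent ultrafilter $p\ni A$ and put $z=p\text{-}\lim_n T^nx\in\overline{\mathrm{orb}(x,T)}\setminus U$; idempotency gives $p\text{-}\lim_n T^nz=(p+p)\text{-}\lim_n T^nx=z$, so the associated $P\in E$ has $Px=Pz$, i.e.\ $(x,z)$ is a proximal pair with $z\ne x$ --- contradicting distality. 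For the converse, note first that every $IP^*$-set is syndetic (a set with unbounded gaps has a thick complement, and a thick set contains an $IP$-set), so by part (2) $x$ is minimal and $M=\overline{\mathrm{orb}(x,T)}$ is a minimal set; if $x$ were not distal there is a proximal pair $(x,y)$ with $y\in M$, $y\ne x$, hence a minimal left ideal $L$ with $qx=qy$ for all $q\in L$ and (since $y$ is a minimal point) an idempotent $v\in L$ with $vy=y$, so that $vx=vy=y\ne x$; writing $v$ as $p\text{-}\lim_nT^n$ for an idempotent $p$ and choosing disjoint open sets $V\ni y$ and $U\ni x$, the set $\{n:T^nx\in V\}$ lies in $p$, hence is an $IP$-set disjoint from $N(x,U)$, so $N(x,U)$ is not $IP^*$, a contradiction. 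The recursion in (1) is the only step demanding a real computation, while (3) is the genuinely heavy one, resting on the idempotent-ultrafilter description of $IP$-sets and on Ellis-semigroup facts; if one prefers to keep the preliminaries light, (3) may simply be cited from Furstenberg's treatment of $IP$-sets in \cite{Fur81}.
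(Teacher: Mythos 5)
Your proof is correct, but there is nothing in the paper to compare it against: the authors state this lemma without proof, as a well-known fact cited from Furstenberg's book \cite{Fur81}. Your three arguments are the standard ones — the recursive construction of $FS(\{p_i\}_{i=1}^{\infty})$ inside $N(x,U)$ for (1), the finite-subcover argument and its contrapositive for (2), and the idempotent-ultrafilter/enveloping-semigroup description of proximality for (3) — and all the steps go through. Two points you pass over quickly and should make explicit if you write this up: in (1), the continuity step producing $V_{n+1}$ needs $U$ open, so replace $U$ by its interior at the outset (harmless, since the conclusion for the interior implies it for $U$); and in (3), the assertion that the idempotent $v\in E(X,T)$ fixing $y$ can be written as $p\text{-}\lim_n T^n$ for an idempotent ultrafilter $p\in\beta\Z_+$ rests on the canonical surjective homomorphism $\beta\Z_+\to E(X,T)$ together with the observation that the fibre over an idempotent is a nonempty closed subsemigroup, to which the Ellis--Namakura lemma applies. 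Also, in the converse of (2), one should note the degenerate case where the chosen $z\in M'$ lies on the orbit of $x$: then $T^nx\in M'$ for all large $n$ and $N(x,U)$ is finite, so non-syndetic trivially; otherwise $m_i\to\infty$ is automatic. Finally, Furstenberg's own treatment of (3) runs through IP-limits and Hindman's theorem rather than $\beta\Z_+$, but the two routes are interchangeable, and citing \cite{Fur81} — as the paper does — is a perfectly acceptable alternative to reproving it.
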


When related to the weak mixing property, we have the following results
(see \cite[Lemma 5.1]{HY05} and \cite[Theorem 9.12]{Fur81}, respectively):

\begin{lem}\label{w.m.}
  Let $(X, T)$ be a t.d.s. with infinite cardinality. Then
  \begin{enumerate}

    \item $X$ is weakly mixing if and only if $X$ is transitive and for any non-empty open subset $U\subset X$, there is $n\in \Z_+$ such that $n, n+1\in N(U,U)$ ;

    \item if $(X, T)$ is minimal and weakly mixing, then no point of $X$ is distal.

  \end{enumerate}
\end{lem}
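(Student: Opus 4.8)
My plan is to establish part~(1) --- the topological characterization of weak mixing --- first, and then deduce part~(2) from it together with the $IP^*$-characterization of distal points in the preceding lemma. For the forward implication of (1): if $X$ is weakly mixing then $(X\times X,T\times T)$ is transitive, and projecting a transitive point of $X\times X$ to the first coordinate gives a transitive point of $X$. For the ``two consecutive return times'' property, fix a non-empty open $U\subseteq X$; since $T$ is a continuous surjection, $T^{-1}U$ is non-empty and open, so $U\times U$ and $U\times T^{-1}U$ are non-empty open subsets of $X\times X$, and transitivity of $X\times X$ provides $n\in\Z_+$ and $(x,y)\in U\times U$ with $(T^n x,T^n y)\in U\times T^{-1}U$; then $n\in N(U,U)$ is witnessed by $x$ and, since $T^{n+1}y\in U$, also $n+1\in N(U,U)$ is witnessed by $y$.

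For the converse of (1) I would first upgrade the hypothesis so that $N(U,V)$ contains two consecutive integers for \emph{every} pair of non-empty open sets $U,V$: transitivity supplies some $a\in N(U,V)$, so $U':=U\cap T^{-a}V$ is non-empty open; applying the hypothesis to $U'$ gives $n$ with $n,n+1\in N(U',U')$, and then any $q\in U'$ with $T^n q\in U'$ satisfies $q\in U$ and $T^{a+n}q\in V$ (using $U'\subseteq T^{-a}V$), and likewise for $n+1$, so $\{a+n,a+n+1\}\subseteq N(U,V)$. From here the standard bootstrap applies, promoting this property to transitivity of all the products $(X^n,T^{(n)})$, i.e.\ to weak mixing (in fact weak mixing of all orders); this is \cite[Lemma~5.1]{HY05}. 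I expect this to be the main obstacle: the points witnessing ``$n$'' and ``$n+1$'' above are in general distinct, so upgrading ``two consecutive return times'' to genuine transitivity of $X\times X$ requires Furstenberg's argument for synchronizing return times of different points, and is not a short manipulation.

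For part~(2) I would argue by contradiction, assuming $z\in X$ is a distal point. Since $X$ is minimal, $\overline{\textrm{orb}(z,T)}=X$, so $z$ is proximal only to itself, and since $|X|=\infty$ the space $X$ has no isolated points. Two facts are immediate: by weak mixing $X\times X$ is transitive, and as $\Delta_2$ is a proper closed invariant set the transitive points of $X\times X$ lie off $\Delta_2$ and are proximal pairs, so there exist proximal pairs $(x,y)$ with $x\neq y$ (and by minimality such an $x$ is already non-distal); and by the preceding lemma $N(z,U)$ is an $IP^*$-set for every neighbourhood $U$ of $z$. To rule out \emph{every} point being distal, I would fix a neighbourhood $U$ of $z$ and a non-empty open $W$ with $\overline{U}\cap\overline{W}=\emptyset$, and use weak mixing of all orders (a standard consequence of weak mixing) to produce an $IP$-set contained in $N(z,W)$; since $W\cap U=\emptyset$ this $IP$-set is disjoint from $N(z,U)$, contradicting the $IP^*$-property. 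Producing an $IP$-set inside such a return-time set is the delicate step, and is precisely what is carried out in \cite[Theorem~9.12]{Fur81}; once the reduction above is set up I would invoke that reference for the details.
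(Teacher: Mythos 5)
The paper gives no proof of this lemma at all: it simply quotes part (1) from \cite[Lemma 5.1]{HY05} and part (2) from \cite[Theorem 9.12]{Fur81}. Your reductions of the easy directions are correct (the $U\times U$ versus $U\times T^{-1}U$ trick, the upgrade from $N(U,U)$ to $N(U,V)$, and the $IP$/$IP^*$ disjointness argument), and you defer the two genuinely hard steps to exactly the same two references the paper cites, so this is essentially the paper's approach with the routine parts filled in.
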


\medskip
Let $J$ be a subset of $\Z_+$. The \textit{density} and \textit{upper Banach density} of $J$ are defined by
$$d(J)=\lim_{n \ra \infty} \frac{\# \{ J \cap [0, n-1]\}}{n} \mbox{\ \ and \ }  \BD^*(J)=\limsup_{N-M \ra \infty} \frac{\# \{J \cap [M,N] \} }{N-M+1} .$$
where $I$ is over all non-empty finite intervals of
$\Z_+$ and $\#\{\cdot\}$ denotes the cardinality of the set.

The following lemma is also well known (see \cite{Fur81} or \cite[Propostion 2.3]{YZ08}, etc), and we omit the simple proof.

\begin{lem}\label{pubd-ip}
If $J$ has positive upper Banach density and $Q$ is an $IP$-set,
then there exists $l\in Q$ such that
$$\BD^*(J\cap(J-l))> 0.$$
\end{lem}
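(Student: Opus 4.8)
The plan is to prove Lemma~\ref{pubd-ip}, which asserts that if $J\subset\Z_+$ has positive upper Banach density and $Q$ is an $IP$-set, then there is $l\in Q$ with $\BD^*(J\cap(J-l))>0$. First I would fix $c:=\BD^*(J)>0$ and, by definition of upper Banach density, choose a sequence of intervals $[M_k,N_k]$ with $N_k-M_k\to\infty$ along which the relative density of $J$ converges to $c$. The idea is to encode the set $J$ as a point in a shift space: let $\omega\in\{0,1\}^{\Z}$ be the indicator sequence of $J$ and consider the closure $X$ of its orbit under the shift $\sigma$. Using the intervals $[M_k,N_k]$ and passing to a subsequence, one obtains a shift-invariant Borel probability measure $\mu$ on $X$ (a weak-$*$ limit of the empirical measures $\frac{1}{N_k-M_k+1}\sum_{j=M_k}^{N_k}\delta_{\sigma^j\omega}$) such that $\mu([1])\ge c$, where $[1]=\{x\in X: x(0)=1\}$ is the cylinder set corresponding to $0\in J$. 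The point of this reduction is that positive upper Banach density of $J$ translates into positive measure of the cylinder $[1]$.

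Next I would invoke the measure-theoretic $IP$ recurrence phenomenon. Since $Q$ is an $IP$-set, it contains $FS(\{p_i\}_{i=1}^\infty)$ for some sequence $p_i\in\N$. By the $IP$ Poincar\'e recurrence theorem (a consequence of the Hilbert space $IP$ van der Corput / Hindman-type argument; see \cite{Fur81}), for a measure preserving system $(X,\B,\mu,\sigma)$ and any set $A$ with $\mu(A)>0$, there exist arbitrarily "large" $l\in FS(\{p_i\}_{i=1}^\infty)\subset Q$ with $\mu(A\cap\sigma^{-l}A)>0$; in fact one can guarantee $\mu(A\cap\sigma^{-l}A)>\mu(A)^2-\ep$ for $\ep$ as small as we like, but even the weaker statement $\mu(A\cap\sigma^{-l}A)>0$ for some $l\in Q$ suffices here. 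Apply this with $A=[1]$ to obtain $l\in Q$ with $\mu([1]\cap\sigma^{-l}[1])>0$.

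Finally I would translate this positivity of measure back to a statement about densities. The key observation is that $\sigma^j\omega\in[1]\cap\sigma^{-l}[1]$ precisely when $j\in J$ and $j+l\in J$, i.e. when $j\in J\cap(J-l)$. The empirical measures along $[M_k,N_k]$ assign to the cylinder $[1]\cap\sigma^{-l}[1]$ the quantity $\frac{\#\{J\cap(J-l)\cap[M_k,N_k]\}}{N_k-M_k+1}$ (up to a boundary error of order $l/(N_k-M_k+1)\to 0$), so by the portmanteau theorem applied to the clopen set $[1]\cap\sigma^{-l}[1]$ we get
\[
\limsup_{k\to\infty}\frac{\#\{J\cap(J-l)\cap[M_k,N_k]\}}{N_k-M_k+1}\ge \mu([1]\cap\sigma^{-l}[1])>0,
\]
which forces $\BD^*(J\cap(J-l))>0$, as required.

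The main obstacle is the $IP$ Poincar\'e recurrence step: one must be careful that the $IP$-recurrence theorem is applied correctly so that the returning time $l$ is produced as an element of $FS(\{p_i\})$ (hence of $Q$), rather than merely as some integer; this is exactly what the $IP$ version of the theorem provides, in contrast to ordinary Poincar\'e recurrence. A secondary technical point is ensuring the weak-$*$ limit $\mu$ is genuinely shift-invariant even though the averaging intervals $[M_k,N_k]$ need not start at $0$ — this follows from the standard telescoping estimate showing $|\mu(f\circ\sigma)-\mu(f)|$ vanishes because the discrepancy is an endpoint term of size $O(1/(N_k-M_k+1))$. Since this is a well-known lemma, in the paper we simply cite \cite{Fur81} and \cite[Proposition 2.3]{YZ08} and omit these routine verifications.
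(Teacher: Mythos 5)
Your proof is correct. The paper in fact omits any proof of this lemma, declaring it well known and citing \cite{Fur81} and \cite[Proposition 2.3]{YZ08}, so there is no in-text argument to compare against; the route you take --- Furstenberg correspondence (empirical measures along intervals witnessing $\BD^*(J)$, weak-$*$ limit, $\mu([1])=\BD^*(J)>0$ since $[1]$ is clopen) followed by $IP$ Poincar\'e recurrence and translation back via clopen cylinders --- is the standard one underlying those references, and all the technical points you flag (shift-invariance of the limit, landing the return time in $FS(\{p_i\})$) are handled correctly. One remark: the $IP$-recurrence step needs none of the Hindman/van der Corput machinery you allude to. Writing $q_n=p_1+\cdots+p_n$, the sets $\sigma^{-q_1}[1],\dots,\sigma^{-q_N}[1]$ all have measure $\mu([1])>0$, so for $N$ large two of them intersect in positive measure (pigeonhole, or Cauchy--Schwarz for the quantitative $\mu(A)^2-\ep$ bound), and $q_n-q_m=p_{m+1}+\cdots+p_n\in FS(\{p_i\})\subset Q$. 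The same double-counting applied directly to the translates $J-q_i$ on the intervals $[M_k,N_k]$ even gives a purely combinatorial proof with no invariant measure at all, which is presumably why the authors call the proof ``simple'' and omit it.
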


\subsection{Hyperspace}

Let $X$ be a compact and metrizable space with metric $d$. Define $K(X)=\{A\subset X \colon A\neq\emptyset, \,\, \ol{A}=A\}$, that is,
the collection of non-empty closed subsets of $X$. Endow a \textit{Hausdorff metric} $d_H$ on $K(X)$ defined by
$$d_H(A, B)=\max \left\{\max_{x \in A} \min_{y \in B} d(x, y),
\max_{y \in B} \min_{x \in A} d(x, y)\right\}$$
for $A, B \in K(X)$, or equivalently by
$$d_H(A, B)=\inf \{\ep>0 \colon B_\ep(A) \supset B,\, B_\ep(B) \supset A \},$$
where $B_\ep(A)=\{x\in X\colon d(x, a)<\ep\;\mbox{for some }\; a\in A\}$ is an $\ep$-neighborhood of $A$ in $X$.
We call the space $K(X)$ with the topology induced by $d_H$, which is the \textit{Vietoris topology}
(see \cite[Theorem 4.5]{Nad92}), as hyperspace of $X$. Note that this topology turns $K(X)$ into a compact space.

\medskip
Fix $n\in \N$, denote $K_n(X)=\{A\in K(X)\colon |A|\le n\}$. It is easy to see that $K_n(X)$ is closed
and $\cup_{n\ge 1} K_n(X)$ is dense in $K(X)$ (see \cite[Lemma 2]{BS75}).

\medskip
For any non-empty open subsets $U_1, \dots, U_n$ of $X$, $n \in \N$, let
$$\langle U_1, \dots, U_n\rangle=\{A \in K(X)\colon  A \subset
\cup_{i=1}^n U_i \mbox{ and } A \cap U_i \neq \emptyset \mbox{ for each } i=1, \dots, n\}.$$
We can check that $\langle U_1, \dots, U_n\rangle$ is a non-empty open subset of $K(X)$. Moreover the following family
$$\{\langle U_1, \dots, U_n\rangle\colon U_1, \dots, U_n \mbox{ are non-empty open subsets of } X, n \in \N\}$$
forms a basis for the Vietoris topology \cite{Nad92}.

\medskip
Now let $(X, T)$ and $(Y,S)$ be two systems and $\pi\colon X\to Y$ be a factor map. Define the induced map
$\pi_K\colon K(X) \ra K(Y)$ by
$$\pi_K(A)=\pi (A) \mbox{ for } A \in K(X).$$
It is easy to verify that $\pi_K$ is also a factor map. Particularly when $Y=X$ we obtain an induced continuous
surjective transformation $T_K$, such that $(K(X), T_K)$ is a t.d.s.

\medskip
Recall that $(X, T)$ is a $P$-system if it is transitive with dense periodic points; and has
\textit{dense small periodic sets} \cite{HY05} if for any non-empty open subset $U\subset X$, there
exists a closed subset $A$ of $U$ and $n \in \N$ such that $T^n A \subset A$. Now we present some results
we will use in the sequel. See \cite[Theorem 2]{Ban05} and \cite[Theorem 1.1]{Li2014} respectively for details.
\begin{lem}\label{P-system}
Let $(X,T)$ be a t.d.s. and $(K(X),T_K)$ be the hyperspace. Then
\begin{enumerate}

\item $(X,T)$ is weakly mixing if and only if $(K(X),T_K)$ is transitive;

\item $(X,T)$ is weakly mixing and has dense small periodic sets if and only if $(K(X),T_K)$ is a $P$-system .

\end{enumerate}
\end{lem}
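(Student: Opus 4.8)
The plan is to prove both equivalences by translating between the basic open sets $\langle V_1,\dots,V_m\rangle$ of $K(X)$ and the corresponding open sets of $X$, so that hyperspace transitivity reduces to a handful of transfer-time inequalities in $X$; part (2) will then follow from part (1) together with the elementary fact that a closed set $A$ with $T^nA\subseteq A$ can be shrunk to one with $T^nA=A$. Throughout one may assume $X$ is infinite, since if $K(X)$ is transitive and $X$ is finite, then the $T_K$-fixed point $X\in K(X)$ forces $|K(X)|=1$, hence $|X|=1$ and every assertion is trivial.

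For the forward direction of (1) I would use that weak mixing implies weak mixing of all orders \cite{Fur81}, so that $\bigcap_{i,j}N(U_i,V_j)$ is infinite for any nonempty open $U_1,\dots,U_n,V_1,\dots,V_m\subset X$. Fixing $n_0$ in this set and choosing $a_{ij}\in U_i\cap T^{-n_0}V_j$, the finite set $A=\{a_{ij}\}$ satisfies $A\in\langle U_1,\dots,U_n\rangle$ and $T^{n_0}A\in\langle V_1,\dots,V_m\rangle$; since these boxes form a basis of $K(X)$ and $n_0$ ranges over an infinite set, $(K(X),T_K)$ is transitive.

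For the reverse direction of (1) — the crux — I would first extract from transitivity of $K(X)$, applied to the pair $\langle U\rangle$, $\langle V_1,\dots,V_m\rangle$, the statement that $\bigcap_{j=1}^m N(U,V_j)$ is infinite for all nonempty open $U,V_1,\dots,V_m\subset X$: keeping the source a single-set box forces the whole of any witnessing $A\subseteq U$ into $\bigcup_j V_j$, while a point of $A$ landing in each $V_j$ shows the relevant time lies in $N(U,V_j)$. The case $m=1$ gives transitivity of $(X,T)$; the case $m=2$ with $V_1=U$ and $V_2=T^{-1}U$ (nonempty open since $T$ is surjective) produces $n$ with $n\in N(U,U)$ and $n\in N(U,T^{-1}U)$, and since $n\in N(U,T^{-1}U)$ is the same as $n+1\in N(U,U)$, we obtain $n,n+1\in N(U,U)$. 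Lemma \ref{w.m.}(1) then yields that $(X,T)$ is weakly mixing.

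For (2): if $(X,T)$ is weakly mixing with dense small periodic sets, then $K(X)$ is transitive by (1), so it suffices to produce dense periodic points; given a box $\langle V_1,\dots,V_m\rangle$, I would pick closed $A_j\subseteq V_j$ with $T^{n_j}A_j\subseteq A_j$, replace each by $A_j^\infty=\bigcap_{k\ge 0}T^{kn}A_j$ with $n$ a common multiple of the $n_j$ (a nonempty closed set with $T^nA_j^\infty=A_j^\infty$), and take $B=\bigcup_j A_j^\infty\in\langle V_1,\dots,V_m\rangle$, which is $T_K^n$-fixed. Conversely, transitivity of the $P$-system $K(X)$ gives weak mixing of $X$ by (1), and a periodic point of $K(X)$ inside $\langle U\rangle$ is a closed $A\subseteq U$ with $T^nA=A\subseteq A$, so $X$ has dense small periodic sets. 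I expect the reverse direction of (1) to be the only genuine obstacle: the naive map $\langle U_1,U_2\rangle\to\langle V_1,V_2\rangle$ fails to pair $U_i$ with $V_i$, and the remedy is precisely to keep the source a single-set box so that the entire image is controlled, then use the freedom in the target to see $N(U,U)$ and $N(U,T^{-1}U)$ simultaneously. (Alternatively, one can simply invoke \cite[Theorem 2]{Ban05} and \cite[Theorem 1.1]{Li2014}.)
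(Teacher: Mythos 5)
Your argument is correct and complete. The paper itself gives no proof of this lemma --- it is quoted from the literature (Banks's Theorem~2 and Li's Theorem~1.1, exactly the references you mention in your closing parenthesis) --- so there is no in-paper proof to compare against; what you have written is a self-contained derivation of the cited facts. Your route is the standard one and all the steps check out: the forward direction of (1) via weak mixing of all orders and finite sets $\{a_{ij}\}$; the reverse direction via the key observation that a single-set source box $\langle U\rangle$ forces the whole image into the target union, yielding $\bigcap_j N(U,V_j)\neq\emptyset$, and then the choice $V_1=U$, $V_2=T^{-1}U$ feeding into Lemma~\ref{w.m.}(1) (your separate disposal of the finite case is needed there and is handled correctly, using surjectivity of $T$ both for $T^{-1}U\neq\emptyset$ and for $T_K(X)=X$). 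In (2), the passage from $T^nA_j\subseteq A_j$ to the genuinely $T^n$-invariant core $A_j^\infty=\bigcap_{k\ge 0}T^{kn}A_j$ is the one point requiring care, and it works: the sets $T^{kn}A_j$ are nested nonempty compacta, and for such a decreasing family $T^n\bigl(\bigcap_k T^{kn}A_j\bigr)=\bigcap_k T^{(k+1)n}A_j=A_j^\infty$, so $B=\bigcup_j A_j^\infty$ is a bona fide periodic point of $T_K$ in the prescribed box. The converse of (2) is immediate as you say. In short, the proposal is a valid substitute for the citation.
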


\section{Pointwise minimality}

A t.d.s. $(X, T)$ is \textit{pointwise minimal} (resp. \textit{distal}) if all points in $X$ are minimal
(resp. distal). Note that if $(K(X),T_K)$ is pointwise minimal (resp. distal), then so is $(X,T)$, since it is a subsystem of $(K(X),T_K)$.
But there are many examples showing the reciprocal is not valid (see \cite[theorem 3]{BS75}, etc.).
Now we provide a simpler example.

\begin{exam}\label{ex1}
Let $\T^1=\R/ \Z$. Consider the map $T\colon\T^2\to \T^2, (x,y)\mapsto (x+\alpha,x+y\modone)$,
where $\alpha$ is an irrational number. Then for any $n\in \N$,
$$
T^n(x,y)=(x+n\alpha,y+nx+a(n)\alpha \modone)
$$
with $a(n)=n(n-1)/2$. Note that $\T^2$ is distal, since it is a distal extension of the distal
system $\T^1$. Now we show $(K(\T^2),T_K)$ is not pointwise positively recurrent, and a fortiori neither distal nor pointwise minimal.

Let $y_0\in \T^1$ and $A=\{(x,y_0)\colon x\in\T^1\}\in K(\T^2)$. Fix any $n\in\N$ and note that
$$
d_H(T_K^n A,A)=d_H(\{(x+n\alpha, y_0+nx+a(n)\alpha) \colon x\in \T^1\},\T^1\times \{y_0\}).
$$
Put $\ep_0=1/10$ and choose $x_n\in \T^1$ such that
$$
\ep_0 \le nx_n+a(n)\alpha \modone\le 1-\ep_0.
$$
 Thus for any $(x,y_0)\in A$,
by the definition of the Hausdorff metric we get
\begin{equation*}
  \begin{split}
     d_H(T_K^n A,A) & \ge \min_{x\in \T^1} d((x_n+n\alpha, y_0+nx_n+a(n)\alpha),(x,y_0)) \\
       & \ge d(y_0+nx_n+a(n)\alpha, y_0)\ge \ep_0
   \end{split}
\end{equation*}
This implies that $N(A, B_{\ep_0}(A))\setminus \{0\}=\emptyset$ and so $A$ is not positively recurrent.
\end{exam}

A natural question is when does the pointwise minimality (resp. distality) of $(K(X),T_K)$ hold?
The above Example \ref{ex1} suggests that $K(X)$ is pointwise minimal (resp. distal) if and only if
it is equicontinuous in some sense. To give a strict proof, we need the notion of {\it locally almost periodic system},
which was first introduced by Gottschalk in \cite{Got56}. Here what we need is a particular case,
only considering actions of group $\Z$. So we reformulate the definition to the following form.

\begin{de}
Let $(X,T)$ be a t.d.s. A point $x\in X$ is \textit{locally almost periodic}, if for every neighborhood
$U$ of $x$, there exists a neighborhood $V$ of $x$ and a syndetic set $F\subset \Z$ such
that $T^n V\subset U$ for all $n\in F$. $(X,T)$ is called a \textit{locally almost periodic system}
if every point of $X$ is locally almost periodic.
\end{de}

Next we need to invoke an useful lemma on the characterization of equicontinuity, and for completeness we provide a direct proof. A proof can be also derived from results in \cite{Got56}.

\begin{lem}\label{equi.}
$(X,T)$ is equicontinuous if and only if $(X,T)$ is distal and locally almost periodic.
\end{lem}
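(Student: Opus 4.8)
The plan is to prove both implications. The forward direction is essentially immediate: if $(X,T)$ is equicontinuous then, as already noted in the excerpt, $T$ is surjective so $(X,T)$ is distal; and equicontinuity trivially gives local almost periodicity, since for a given neighborhood $U$ of $x$ one picks $\delta$ with $\diam T^n(B(x,\delta))<\ep$ for all $n\ge 0$ (where $B(x,\ep)\subset U$), so $V=B(x,\delta)$ works with $F=\Z_+$, which is syndetic. (A small care point: the definition of local almost periodicity uses $F\subset\Z$ rather than $\Z_+$; but equicontinuity of a surjective system on a compact metric space is equivalent to equicontinuity of the generated group action, so we may run the same argument for negative powers, or simply invoke that $\Z_+$ is syndetic in $\Z$.)

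For the converse, assume $(X,T)$ is distal and locally almost periodic, and I would argue by contradiction: if $(X,T)$ is not equicontinuous, then there is $\ep>0$, a point $x\in X$, points $x_k\to x$ and times $n_k\in\Z_+$ with $d(T^{n_k}x,T^{n_k}x_k)\ge\ep$. The strategy is to use local almost periodicity to control the orbit of $x$ and distality to force a contradiction. Fix a neighborhood $U$ of $x$ of small diameter (say $\diam U<\ep/3$), and let $V$ and the syndetic set $F$ be as in the definition, with bound $N$ on the gaps of $F$. For each large $k$, $x_k\in V$, so $T^nx_k\in U$ and $T^nx\in U$ for all $n\in F$; hence $d(T^nx,T^nx_k)<\ep/3$ for $n\in F$. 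Now write each $n_k=m_k+r_k$ with $m_k\in F$ and $0\le r_k\le N$ (using syndeticity), and pass to a subsequence so that $r_k\equiv r$ is constant. Then $d(T^{n_k}x,T^{n_k}x_k)=d(T^r T^{m_k}x, T^r T^{m_k}x_k)$; since $T^{m_k}x,T^{m_k}x_k$ lie in the small set $U$, by passing to a further subsequence we may assume $T^{m_k}x\to p$ and $T^{m_k}x_k\to q$ with $p,q\in\ol U$, hence $d(p,q)\le\diam\ol U<\ep/3$, while applying $T^r$ and continuity gives $d(T^rp,T^rq)\ge\ep$. So $p\ne q$ but they are arbitrarily close; iterating this idea (choosing $\diam U$ smaller, i.e. building a sequence of neighborhoods) should produce a proximal pair that is not diagonal, contradicting distality.

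A cleaner way to package the last step, which I would actually use, is to work directly with proximality: distality says that whenever $d(T^{m_j}u,T^{m_j}v)\to 0$ along some sequence with $u,v$ in the orbit closure, then $u=v$. From the construction above, choosing neighborhoods $U_i$ of $x$ with $\diam U_i\to 0$ and the associated $V_i, F_i$, and diagonalizing over $i$ and $k$, one extracts $T^{m}x$ and $T^{m}x_k$ (suitable powers) converging to points $p,q$ in $\ol{\mathrm{orb}(x,T)}$ with $d(p,q)=0$ hence $p=q$, yet $d(T^rp,T^rq)\ge\ep$ for some fixed $r$ — a contradiction. Equivalently, one shows $p$ and $q$ are a proximal pair in the orbit closure (they can be taken arbitrarily close after applying suitable return times from $F_i$), which distality forbids unless $p=q$, but then $\ep\le d(T^rp,T^rq)=0$.

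I expect the main obstacle to be the bookkeeping in the converse: correctly choosing the nested neighborhoods, extracting the constant remainder $r$ via syndeticity, and taking the right subsequential limits so that the limit points genuinely lie in $\ol{\mathrm{orb}(x,T)}$ and form a non-diagonal proximal (or at least arbitrarily-close) pair on which a fixed power of $T$ separates by $\ep$. Everything else — the forward implication, and the use of distality at the end — is routine. One should also be slightly careful that the syndetic set $F$ in the hypothesis sits in $\Z$, not $\Z_+$, so when splitting $n_k=m_k+r_k$ with $n_k\ge 0$ and $m_k\in F$ one wants $m_k$ to be a nearby element of $F$ with small $|r_k|$; this is fine since gaps of $F$ in $\Z$ are bounded, and $T^{r_k}$ with $r_k$ possibly negative still makes sense as $(X,T)$ is assumed invertible in this part of the paper — or, if one prefers to avoid invertibility, restrict attention to the positive part of $F$, which is still syndetic in $\Z_+$.
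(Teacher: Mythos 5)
There are two genuine gaps, one in each direction.

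\textbf{Forward direction.} Your claim that $V=B(x,\delta)$ together with $F=\Z_+$ witnesses local almost periodicity is false. Knowing $\diam T^n(B(x,\delta))<\ep$ only places $T^n(B(x,\delta))$ inside $B(T^nx,\ep)$; it does not put it inside the prescribed neighborhood $U$ of $x$, because $T^nx$ itself may be far from $x$. An irrational rotation is equicontinuous, yet for a small ball $U$ around $x$ one has $T^nV\subset U$ only for the syndetic set of return times of $x$ to $U$, certainly not for all $n\in\Z_+$. The missing ingredient is exactly the one the paper uses: since $x$ is distal it is a minimal point, so $\{n\in\Z\colon d(T^nx,x)<\ep/2\}$ is syndetic, and \emph{that} set (intersected with the small-diameter estimate) serves as $F$. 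The fix is easy, but as written the step is wrong, not merely imprecise about $\Z$ versus $\Z_+$.

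\textbf{Converse.} The contradiction you aim for does not materialize. For a single neighborhood $U$ you correctly produce $p\neq q$ with $d(p,q)\le\diam\ol{U}$ and $d(T^rp,T^rq)\ge\ep$ for some $r$ with $|r|\le N$, where $N$ is the syndetic gap bound of the set $F=F(U)$. But when you shrink $U$ to diagonalize, $N=N(U)$ (hence $r$) is \emph{not} bounded, so in the limit you get $p=q$ with no fixed power of $T$ separating them --- no contradiction. What your construction genuinely yields is a non-diagonal regionally proximal pair, and that does not contradict distality: Example \ref{ex1} of this paper is distal with $Q(X,T)\neq\Delta_2$. You never actually exhibit a non-diagonal \emph{proximal} pair, which is what distality forbids. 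The paper's proof takes a different and essentially necessary route: it fixes an arbitrary pair $(x_1,x_2)\notin\Delta_2$, uses distality of $T$ and $T^{-1}$ to get a uniform bound $d(T^nx_1,T^nx_2)\ge\ep_0$ for \emph{all} $n\in\Z$, then applies local almost periodicity twice --- first at $x_1$ to propagate this separation to $B(x_1,r_1)\times\{x_2\}$, then at $x_2$ to propagate it to $B(x_1,r_1)\times B(x_2,r_2)$ --- concluding $(x_1,x_2)\notin Q(X,T)$. It then invokes Veech's theorem that $Q(X,T)=\Delta_2$ implies equicontinuity. Your sketch uses local almost periodicity only at the single point where equicontinuity fails, never exploits the uniform orbit-separation of a distal pair, and has no substitute for the final $Q=\Delta_2\Rightarrow$ equicontinuity step; these are the ideas that would need to be added.
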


\begin{proof}
As we mentioned at the beginning in Section~\ref{sec:recurrencestronger}, it is well known that every equicontinuous t.d.s. $(X,T)$
is distal and invertible. Hence it remains to show that it is locally almost periodic.
Fix any $x\in X$ and $\ep>0$. Since both $(X,T)$ and $(X,T^{-1})$ are equicontinuous,
there is $\delta=\delta(\ep)>0$ such that for any $y\in B(x,\delta)$ we have
$d(T^n x,T^n y)<\ep/2$ for all $n\in \Z$. But $x$ is distal, hence $x$ is minimal and so we can find a
syndetic set $F\subset \Z$, such that $d(x,T^n x)<\ep/2$ for each $n\in F$. It immediately implies that  $T^n B(x,\delta)\subset B(x,\ep)$ for all $n\in F$. Indeed, $(X,T)$ is locally almost periodic.

Now assume that $(X,T)$ is distal and locally almost periodic. Let $(x_1,x_2)\in X^2\setminus\Delta_2$. We claim that there exist $\delta>0$ and two positive real numbers $r_1,r_2$ such that $d(T^n x, T^n y)>\delta$
for any $(x,y)\in B(x_1,r_1)\times B(x_2,r_2)$ and $n\in \Z$.
%

By the distality of $(X,T)$ and $(X,T^{-1})$, there is $\ep_0>0$ such that $d(T^n x_1, T^n x_2)\geq\ep_0$ for all $n\in \Z$. By assumptions $x_1$ is locally almost periodic, hence we have a syndetic set $F\subset \Z$ and $r_1>0$ such that $T^p B(x_1, r_1)\subset B(x_1,\ep_0/3)$ for all $p\in F$. Take $m_1\in \N$ and $\delta'>0$ such that $F+[0,m_1]\supset \Z$ and $d(T^j x, T^j y)<\ep_0/3$ for $j=0,1,\dots,m_1$, provided that $d(x,y)<\delta'$. Suppose that there are $x\in B(x_1, r_1)$ and $n_1\in \Z$ such that
$d(T^{n_1} x, T^{n_1} x_2)<\delta'$. There is $j\in [0,m_1]$ such that $n_1+j\in F$. Denote $p_{_0}=n_1+j$ and observe that $d(T^{p_{_0}} x,T^{p_{_0}} x_2)<\ep_0/3$. Since $T^{p_{_0}} B(x_1, r_1)\subset B(x_1,\ep_0/3)$ and $x\in B(x_1, r_1)$, we have
$$d(T^{p_{_0}} x_1,T^{p_{_0}} x_2)\le d(T^{p_{_0}} x_1, x_1)+d(x_1,T^{p_{_0}}x)+d(T^{p_{_0}}x,T^{p_{_0}}x_2)<\ep_0,$$
which is a contradiction. This proves that there are $\delta'>0$ and $r_1>0$ such that $\inf_{n\in \Z}d(T^n x, T^n x_2)\geq \delta'$ for each $x\in B(x_1, r_1)$.

Now by the local almost periodicity of $x_2$, choose $r_2>0$ such that for some syndetic subset $F_2\subset \Z$ we have $T^qB(x_2,r_2)\subset B(x_2,\delta'/4)$ for every $q\in F_2$. Take any $m_2>m_1\in\Z$ with $F_2+[0, m_2]\supset \Z$ and let $\delta>0$ be such that if $d(x,y)<\delta$
then $d(T^k x,T^k y)<\delta'/2$ for $k=0,1,\dots, m_2$. If there are $(x,y)\in B(x_1,r_1)\times B(x_2,r_2)$ and $n_2$ such that $d(T^{n_2} x,T^{n_2} y)<\delta$, then there is $0\le k\le m_2$ with $q_{_0}=n_2+k\in F_2$ such that $d(T^{q_{_0}}x,T^{q_{_0}}y)<\delta'/2.$
But then
$$d(T^{q_{_0}}x,T^{q_{_0}}x_2)\le d(T^{q_{_0}}x,T^{q_{_0}}y)+d(T^{q_{_0}} x_2,T^{q_{_0}}y)<\delta',$$
which again is a contradiction. This implies that $d(T^n x, T^n y)>\delta$
for every $(x,y)\in B(x_1,r_1)\times B(x_2,r_2)$ and $n\in \Z$.
Indeed the claim holds, which implies that $(x_1,x_2)\notin Q(X,T)$.
We have just proved that $Q(X,T)=\Delta_2$ and so $(X,T)$ is equicontinuous.
\end{proof}

Now we are ready to show the following.

\begin{thm}\label{p.w. min.}
Let $(X,T)$ be a t.d.s. The following statements are equivalent:
\begin{enumerate}
\item\label{p.w.min.:1} $(X,T)$ is equicontinuous,
\item\label{p.w.min.:2} $(K(X),T_K)$ is equicontinuous,
\item\label{p.w.min.:3} $(K(X),T_K)$ is distal,
\item\label{p.w.min.:4} $(K(X),T_K)$ is pointwise minimal.
\end{enumerate}
\end{thm}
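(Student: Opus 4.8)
The plan is to prove the cycle $\eqref{p.w.min.:1}\Rightarrow\eqref{p.w.min.:2}\Rightarrow\eqref{p.w.min.:3}\Rightarrow\eqref{p.w.min.:4}\Rightarrow\eqref{p.w.min.:1}$. The first implication, $\eqref{p.w.min.:1}\Rightarrow\eqref{p.w.min.:2}$, is essentially due to Bauer and Sigmund \cite{BS75}, but I would include a short direct argument: if for every $\ep>0$ one has $\delta>0$ with $d(x,y)<\delta\Rightarrow d(T^nx,T^ny)<\ep$ for all $n\ge 0$, then a quick computation with the Hausdorff metric shows $d_H(A,B)<\delta\Rightarrow d_H(T_K^nA,T_K^nB)<\ep$ for all $n\ge 0$, since $T_K^nA=T^n(A)$ lies in the $\ep$-neighborhood of $T^n(B)$ and vice versa. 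The implication $\eqref{p.w.min.:2}\Rightarrow\eqref{p.w.min.:3}$ is immediate from the remark in Section~\ref{sec:recurrencestronger} that every equicontinuous system is distal, applied to $(K(X),T_K)$. The implication $\eqref{p.w.min.:3}\Rightarrow\eqref{p.w.min.:4}$ is also standard: a distal system is pointwise minimal, because in a distal t.d.s. the orbit closure of every point is minimal (the Ellis semigroup of a distal system is a group, so every point is almost periodic); I would either cite this or recall the one-line argument via enveloping semigroups.

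The heart of the matter is $\eqref{p.w.min.:4}\Rightarrow\eqref{p.w.min.:1}$, and here I would use Lemma~\ref{equi.}: it suffices to show that if $(K(X),T_K)$ is pointwise minimal then $(X,T)$ is distal and locally almost periodic. For distality, suppose $(x,y)\in P(X,T)$ with $x\ne y$, so there are $n_i\to+\infty$ with $d(T^{n_i}x,T^{n_i}y)\to 0$. Consider the two-point set $A=\{x,y\}\in K(X)$. By pointwise minimality $A$ is a minimal point of $(K(X),T_K)$, hence $A$ lies in the closure of $\{T_K^{m}A':m\ge 1\}$ for... more carefully: $A$ itself must be a limit of its own forward iterates along a syndetic-return sequence, so there is a sequence $m_j\to\infty$ with $T_K^{m_j}A\to A$, i.e. $\{T^{m_j}x,T^{m_j}y\}\to\{x,y\}$ in $d_H$. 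But along the proximal sequence $n_i$ we can pass to a subsequence so that $T^{n_i}x\to z$ and $T^{n_i}y\to z$ for the same $z$, whence $T_K^{n_i}A=\{T^{n_i}x,T^{n_i}y\}\to\{z\}$, a \emph{singleton}. Then the singleton $\{z\}$ lies in the orbit closure $\overline{\mathrm{orb}(A,T_K)}$, which by minimality of $A$ equals $\overline{\mathrm{orb}(\{z\},T_K)}$; but the orbit closure of a singleton $\{z\}$ consists only of singletons (it is a copy of $\overline{\mathrm{orb}(z,T)}$ inside $K(X)$), so it cannot contain the two-point set $A$ unless $x=y$, a contradiction. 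Hence $P(X,T)=\Delta_2$ and $(X,T)$ is distal.

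It remains to derive local almost periodicity of $(X,T)$ from pointwise minimality of $(K(X),T_K)$. Fix $x\in X$ and a neighborhood $U$ of $x$; pick $\ep>0$ with $B(x,2\ep)\subset U$. The singleton $\{x\}$ is a minimal point of $(K(X),T_K)$, so the return-time set $N_{T_K}(\{x\},\langle B(x,\ep)\rangle)=\{n\ge 0: T^nx\in B(x,\ep)\}=N_T(x,B(x,\ep))$ is positively syndetic; combined with the distality already established (which gives invertibility and lets us symmetrize to get a two-sided syndetic set $F\subset\Z$ with $d(T^nx,x)<\ep$ for $n\in F$), we get a syndetic $F$. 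Now I need a neighborhood $V$ of $x$ with $T^nV\subset U$ for all $n\in F$; since $F$ need not be finite this does not follow from continuity alone, and \emph{this is the main obstacle}. The way around it: since $(X,T)$ is already known to be distal, I can instead run the argument in $K_2(X)$ or use that distality plus pointwise minimality of $K(X)$ forces equicontinuity more directly — concretely, if $(X,T)$ were distal but not equicontinuous, then by Lemma~\ref{equi.} it is not locally almost periodic, so there is $x$, a neighborhood $U$, and for each neighborhood $V\ni x$ and each syndetic $F$ a point $y_V\in V$ and $n\in F$ with $T^ny_V\notin U$; feeding the sequence $A_V=\{x,y_V\}\to\{x\}$ into pointwise minimality of $K(X)$ and using that $\{x\}$ has syndetic return times to $\langle B(x,\ep)\rangle$ while $T_K^{n}A_V=\{T^nx,T^ny_V\}$ is forced outside $\langle U\rangle$ infinitely often along the return times, I obtain that $\{x\}$ is not a minimal point of $K(X)$ — contradiction. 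I would organize this last part carefully as a compactness argument: extract a limit set $A_\infty\in K(X)$ of a suitable sequence $T_K^{k_j}A_{V_j}$ that contains $x$ but also a point outside $\overline{U}$, contradicting that the orbit closure of $\{x\}$ in $K(X)$ consists of singletons. With distality in hand this closes the loop and yields equicontinuity via Lemma~\ref{equi.}.
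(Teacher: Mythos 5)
Your cycle $\eqref{p.w.min.:1}\Rightarrow\eqref{p.w.min.:2}\Rightarrow\eqref{p.w.min.:3}\Rightarrow\eqref{p.w.min.:4}$ is fine, and your distality argument for $\eqref{p.w.min.:4}\Rightarrow\eqref{p.w.min.:1}$ is essentially the paper's: the proximal pair collapses to a singleton $\{z\}$, the two-point set $\{x,y\}$ is a minimal point lying in the same minimal set as $\{z\}$, and the orbit closure of a singleton consists only of singletons. The problem is the second half, local almost periodicity. You correctly diagnose that syndetic return times of the single point $x$ do not give a neighborhood $V$ with $T^nV\subset U$ for all $n$ in a syndetic set, but your proposed workaround does not close the gap. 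You want to feed sets $A_{V_j}=\{x,y_{V_j}\}\to\{x\}$ into the dynamics and extract a limit $A_\infty=\lim_j T_K^{k_j}A_{V_j}$ containing $x$ and a point outside $\ol{U}$, then claim this contradicts the fact that $\ol{\mathrm{orb}(\{x\},T_K)}$ consists of singletons. But such a limit, taken along $A_{V_j}\to\{x\}$ and $k_j\to\infty$ simultaneously, lies in the prolongation of $\{x\}$, not in its orbit closure; nothing about pointwise minimality (or minimality of the point $\{x\}$) forbids non-singleton sets from appearing there. Indeed $\{x\}$ \emph{is} a minimal point of $K(X)$ under hypothesis \eqref{p.w.min.:4}, so no argument of this shape can reach a contradiction with its minimality.

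The missing idea is to apply pointwise minimality of $(K(X),T_K)$ not to singletons or two-point sets but to a closed neighborhood itself: given $x\in V\subset\ol{V}\subset U$, the set $\ol{V}$ is an element of $K(X)$ and hence a minimal point, and $\langle U\rangle$ is an open neighborhood of $\ol{V}$ in the Vietoris topology. Minimality of $\ol{V}$ therefore gives a syndetic set $F\subset\Z$ with $T_K^n\ol{V}\in\langle U\rangle$, i.e. $T^nV\subset T^n\ol{V}\subset U$, for all $n\in F$ — which is exactly local almost periodicity at $x$, with no compactness gymnastics needed. This is the step where the full strength of hypothesis \eqref{p.w.min.:4} (minimality of \emph{every} closed set, not just finite ones) is actually used; with it in hand, Lemma~\ref{equi.} finishes the proof as you intended.
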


\begin{proof}
The equivalence of \eqref{p.w.min.:1} and \eqref{p.w.min.:2} can be found in \cite[Proposition 7]{BS75}. Trivially we have $\eqref{p.w.min.:2} \Longrightarrow
\eqref{p.w.min.:3} \Longrightarrow \eqref{p.w.min.:4}$, so it remains to show that \eqref{p.w.min.:4} implies \eqref{p.w.min.:1}.

Assume $(K(X),T_K)$ is pointwise minimal. First, we claim that $(X,T)$ is distal. Let if possible there
exists a pair $(x,y)\in P(X,T)$. Then there are $z\in X$ and $\{n_i\}_{i=1}^\infty$ such that
$\lim_{i\ra \infty} T^{n_i}x=\lim_{i\ra \infty} T^{n_i}y=z$. Note that $\{z\}$, $\{x,y\}$ are
minimal points in $K(X)$. Hence $\{x,y\}\in \ol{\textrm{orb}(\{z\},T_K)}$, which implies that $x=y$, so the claim holds.
Therefore, $X$ is invertible and $(X,T^{-1})$ is also distal.

Now we aim to show $(X,T)$ is locally almost periodic. Fix any $x\in X$ and any open neighborhood $U$ of $x$. Take any open set $V$ such that
$x\in V\subset \ol{V}\subset U$. Then $\ol{V}\in \langle U\rangle$ and since $(K(X),T_K)$ is
pointwise minimal, there is a syndetic set $F\subset \Z$ such that $T_K^n \ol{V}\in
\langle U\rangle$ for any $n\in F$. Equivalently it means that $T^n V\subset T^n \ol{V}\subset U$ for all $n\in F$, showing  that
$x$ is a locally almost periodic point, which by Lemma~\ref{equi.} implies that $(X,T)$ is equicontinuous, that is \eqref{p.w.min.:1} holds. The proof is completed.
\end{proof}

\begin{rem}
The notions of pointwise minimality, distality and equicontinuity are different in general.
For example, the Denjoy extension of the irrational rotation (restricted to the nonwandering set which is a Cantor set; see \cite{Dev}) is pointwise minimal but not distal. A disk rotated at
different rates around a common center or the Example \ref{ex1} is distal but not
equicontiunous. Theorem \ref{p.w. min.} shows these three properties are the same for map induced on the hyperspace.
\end{rem}

\section{Levels of rigidity}

As mentioned before, the concepts of weak rigidity, rigidity and uniform rigidity were first
introduced by Glasner and Maon \cite{GM89}. It is known that they keep strict inclusion relationship
in general, but for a minimal distal system, rigidity is equivalent to uniform rigidity, and for a
minimal zero-dimensional system, weak rigidity is identical with equicontinuity \cite{GM89}. A result due to Dong \cite{Dong} shows that if a minimal nilsystem is rigid then it is equicontinuous.
Here we point out that group extension of a rigid system is weakly rigid, which can yield from a stronger result below. Note that in the following proof we strongly rely on the theory of Ellis semigroups. The reader not familiar with this topic is referred to \cite{A88,AAG}.

We will denote by $E(X,T)$ the enveloping (or Ellis) semigroup  associated with t.d.s. $(X,T)$, that is
the compact semigroups of $X^X$ defined as the closure of $\{T^n : n\in\Z\}$ in $X^X$.

\begin{thm}
Let $(Y,S)$ be a weakly rigid t.d.s. and $(X,T)$ is a distal extension of $(Y,S)$. Then $(X,T)$ is weakly rigid.
\end{thm}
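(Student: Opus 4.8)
The plan is to use the theory of Ellis semigroups to pass rigidity up through a distal extension. First I would recall the relevant structure: since $(X,T)$ is a distal extension of $(Y,S)$ with factor map $\pi\colon X\to Y$, the enveloping semigroup $E(X,T)$ is a group of homeomorphisms of $X$, and it projects onto $E(Y,S)$ via the natural map $p\colon E(X,T)\to E(Y,S)$ induced by $\pi$. Weak rigidity of $(Y,S)$ means that for every $n$ and every $(y_1,\dots,y_n)\in Y^n$ the $n$-tuple is recurrent for $S^{(n)}$; equivalently the identity element $\id_{Y^n}$ lies in the closure of $\{S^{(n)k}:k\in\Z\setminus\{0\}\}$ acting on the orbit, or, more usefully, $\id_Y\in E(Y,S)$ is a limit of iterates $S^{k_i}$ with $|k_i|\to\infty$ on every finite subset of $Y$ simultaneously. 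A clean reformulation: $(Y,S)$ is weakly rigid iff the identity $\id_Y$ belongs to the closure in $E(Y,S)$ of $\{S^k : |k|\ge m\}$ for every $m$; call such elements ``rigidity elements''.

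The key step is to show that the set of rigidity elements of $E(X,T)$ — elements $u\in E(X,T)$ that are limits of $T^{k_i}$ with $|k_i|\to\infty$ — together with the identity, projects onto the analogous set in $E(Y,S)$, and then to exploit distality to lift the identity. Here is the mechanism: take any $n$ and any $(x_1,\dots,x_n)\in X^n$; I want to produce $k_i$ with $|k_i|\to\infty$ and $T^{k_i}x_j\to x_j$ for all $j$. Set $y_j=\pi(x_j)$. By weak rigidity of $(Y,S)$ applied to the tuple $(y_1,\dots,y_n)$, there is a net (or sequence) $k_i$ with $|k_i|\to\infty$ and $S^{k_i}y_j\to y_j$ for all $j$. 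Passing to a subnet, $T^{k_i}\to u$ in $E(X,T)$ for some $u$; then $p(u)=\lim S^{k_i}$ fixes each $y_j$. Now the fibers: $u$ maps the fiber $\pi^{-1}(y_j)$ to $\pi^{-1}(y_j)$, and since $(X,T)$ is a distal extension, the relative enveloping structure on each fiber is a group acting ``almost periodically'', so $u$ restricted to the relevant finite set has finite order, or at least one can iterate $u$ to return $x_j$ close to itself. Concretely, because $E(X,T)$ is a group and the orbit closure of $(x_1,\dots,x_n)$ under $E(X,T)$ is a minimal-fiber-type set, the element $u$ is invertible and lies in a compact group; some power $u^m$ (or a limit of powers) fixes each $x_j$, and this limit of powers of $u$ is again a limit of powers $T^{k_i'}$ with $|k_i'|\to\infty$, giving recurrence of the $n$-tuple.

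I expect the main obstacle to be making the last fiber-wise argument precise: one must argue that the projection $p\colon E(X,T)\to E(Y,S)$ restricted to the subgroup $p^{-1}(\id_Y)$ — i.e. the ``relative'' enveloping semigroup acting on fibers — is such that the identity of $E(X,T)$ is approximable by iterates of any $u\in p^{-1}(\id_Y)$. The right tool is the standard fact (Furstenberg; see \cite{A88,AAG}) that for a distal extension the relative enveloping action on each fiber $\pi^{-1}(y)$ is equicontinuous, hence factors through a compact group, so for any $u$ with $p(u)=\id$ the closure of $\{u^k : k\in\Z\}$ in $E(X,T)$ is a compact abelian group whose identity fixes every point of every fiber; that identity is a limit of powers $u^{k}=\lim_i T^{k\cdot k_i^{(?)}}$ along indices going to infinity. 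Once this is in place, the proof is essentially: lift via weak rigidity of $(Y,S)$, then close up inside the compact group generated by the resulting Ellis element to recover the identity on $X$. I would also remark at the end that taking $(X,T)=(Y,S)\times G$ a group extension and $(Y,S)$ merely rigid is the special case promised in the introduction, since a group extension of a distal-over-a-point system is distal over the base; actually for the stated corollary one only needs that a minimal distal, or more generally distal, extension preserves weak rigidity, which is exactly what we proved.
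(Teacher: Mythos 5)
Your overall strategy --- lift the rigidity witness from $E(Y,S)$ to $E(X,T)$ through the induced homomorphism and then use distality of $\pi$ to force the lifted element to fix points inside fibers --- is the same as the paper's. The genuine gap is in the closing step. You justify ``iterating $u$ to return $x_j$ to itself'' by asserting that $E(X,T)$ is a group of homeomorphisms and that for a distal extension the relative enveloping action on each fiber is equicontinuous, so that the closure of $\{u^k\colon k\in\Z\}$ is a compact abelian group whose identity fixes every fiber pointwise. Neither assertion holds at the generality you need. By Ellis's theorem $E(X,T)$ is a group precisely when $(X,T)$ itself is distal, which is not assumed here: every system is a distal extension of itself, and weakly rigid systems need not be distal (minimal weakly mixing uniformly rigid systems exist). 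Likewise, equicontinuity of the fiber action is the hallmark of \emph{isometric} extensions, not distal ones; taking $\pi\colon X\to\{*\}$ with $X$ distal but not equicontinuous gives a distal extension whose single fiber carries the full, non-equicontinuous enveloping action. So the compact-group mechanism you lean on is unavailable, and ``some power $u^m$ fixes each $x_j$'' is exactly the assertion that still needs proof.

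The repair --- which is what the paper does --- needs no group structure at all. Work in the tuple system $(X^n,T^{(n)})$, observe that $\theta^{-1}(u)$, intersected with the adherence semigroup (the limit points of $\{(T^{(n)})^k\}$ with $|k|\to\infty$), is a nonempty closed subsemigroup of $E(X^n,T^{(n)})$, and extract an idempotent $v$ from it by the Ellis--Namakura lemma. For an idempotent the pair $(x,vx)$ is automatically proximal, since $v\cdot(x,vx)=(vx,v^2x)=(vx,vx)$; combined with $\pi_n(vx)=u\,\pi_n(x)=\pi_n(x)$ and the distality of $\pi$ this forces $vx=x$ coordinatewise, and an idempotent in the adherence semigroup fixing $x$ certifies recurrence of the tuple. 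If you replace your compact-group paragraph by this idempotent argument, your proof coincides with the paper's. One further small point: your reformulation of weak rigidity correctly places $\id_Y$ in the closure of $\{S^k\colon |k|\ge m\}$ for every $m$, but you must carry this through to the end and check that the final element fixing $(x_1,\dots,x_n)$ is itself such a limit (it is, because the adherence semigroup is a closed subsemigroup), otherwise fixing $x$ by an element of the full enveloping semigroup is vacuous ($\id$ always does it).
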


\begin{proof}
Fix any integer  $n\geq 1$ and
for each $y\in Y^n$ put $F_y=\{p\in E(Y^n,S^{(n)}): py=y\}$.
Since $Y^Y$ is endowed with the topology of pointwise convergence and $(Y,S)$ is weakly rigid, we see that $\id^{(n)} \in E(Y^n,S^{(n)})$.
Clearly $\id^{(n)} \in F_y$ for every $y\in Y^n$, hence $F=\cap_{y\in Y^n}F_y$ is a non-empty compact semigroup of $E(Y^n,S^{(n)})$. Then we can apply Ellis-Namakura Lemma (see \cite[Lemma~2.1]{AAG}) to $F$
obtaining an idempotent element $u\in F$.

Let $\pi\colon X\to Y$ be the distal extension, and then define $\pi_n$ the natural factor map given by $\pi_n(x_1,\dots,x_n)=(\pi(x_1),\ldots,\pi(x_n))$.
By \cite[Theorem 7]{A88} there exists a unique continuous semigroup homomorphism $\theta\colon E(X^n,T^{(n)})\to E(Y^n,S^{(n)})$ such that $\pi_n(xq)=\theta(q)\pi_n(x)$
for every $x\in X^n$ and $q\in E(X^n,T^{(n)})$.
Clearly $J=\theta^{-1}(u)$ is a closed semigroup in $E(X^n,T^{(n)})$ so again by Ellis-Namakura Lemma there is an idempotent
$v\in J$. Fix any $x\in X^n$ and observe that $\pi_n(xv)=u\pi_n(x)=\pi_n(x)$, which shows that $x$ and $vx$ are in the same fiber of $\pi_n$.  By \cite[Proposition~2.4]{AAG} we immediately obtain that $(x,vx)$ is a proximal pair,
hence for each coordinates $i$ we see that $(x_i, (vx)_i)$ is a proximal pair. But $\pi(x_i)=\pi((vx)_i)$ hence $x_i=(vx)_i$ because
$\pi$ is distal. This shows that $x=vx$ and so again by \cite[Proposition~2.4]{AAG} we see that $x$ is a recurrent point of $T^{(n)}$.
This proves that $T$ is $n$-rigid for every $n$, completing the proof.
%
\end{proof}

In the sequel we will investigate the relations between various types of rigidity for $(X,T)$ and
$(K(X),T_K)$. It is easy to see that $(K(X),T_K)$ is weakly rigid, so is its subsystem $(X,T)$. But the converse implication is not necessarily true. For example,  set $\T^1\times \{y\}$ in t.d.s. $(X,T)$ in Example~\ref{ex1} is not a positively
recurrent point in $(K(X),T_K)$, but $(\T^2,T)$ is minimal and distal, therefore it is weakly rigid by
\cite[Corollary 6.2]{GM89}.

Similarly the rigidity of $(K(X),T_K)$ implies the same for $(X,T)$, but converse does  not
always hold. Rigid t.d.s. which is not uniformly rigid constructed in \cite{GM89} can serve as an example. In this example we
take $X=\{re^{i\theta}\colon 0\le \theta\le 2\pi, r=1-2^{-n},n=1,2,3,\dots\: \mbox{or} \: r=1\}$
and define: $Tz=z\exp(2\pi i\cdot2^{-n})$ when $|z|=1-2^{-n}$ and $Tz=z$ if $|z|=1$. This map is rigid with respect to sequence $n_k=2^k$, but
the set $R=\{r\colon\,r=1-2^{-n},n=1,2,3,\dots\: \mbox{or} \: r=1\}$ is not
a positively recurrent point of the hyperspace.

But the situation changes when consider the property of uniform rigidity. It turns out
that uniform rigidity holds always for both $(X,T)$ and $(K(X),T_K)$, and furthermore on
$(K(X),T_K)$ all the properties of rigidity considered so far coincide. Strictly speaking, we have the following.

\begin{thm}\label{uni. rigid}
Let $(X,T)$ be a t.d.s. Then the following statements are equivalent:
\begin{enumerate}
\item\label{uni. rigid:1} $(X,T)$ is uniformly rigid;
\item\label{uni. rigid:2} $(K(X),T_K)$ is uniformly rigid;
\item\label{uni. rigid:3} $(K(X),T_K)$ is rigid;
\item\label{uni. rigid:4} $(K(X),T_K)$ is weakly rigid.
\end{enumerate}
\end{thm}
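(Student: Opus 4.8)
The plan is to prove the cycle $\eqref{uni. rigid:1} \Rightarrow \eqref{uni. rigid:2} \Rightarrow \eqref{uni. rigid:3} \Rightarrow \eqref{uni. rigid:4} \Rightarrow \eqref{uni. rigid:1}$. The implications $\eqref{uni. rigid:2} \Rightarrow \eqref{uni. rigid:3} \Rightarrow \eqref{uni. rigid:4}$ are immediate from the definitions, since uniform rigidity of $(K(X),T_K)$ gives a sequence $m_i \to \infty$ with $T_K^{m_i} \to \id$ uniformly on $K(X)$, which a fortiori gives $T_K^{m_i} \to \id$ pointwise, i.e. rigidity, and rigidity of $(K(X),T_K)$ implies rigidity of each finite product $(K(X)^n, T_K^{(n)})$ along the \emph{same} sequence, hence $n$-rigidity for every $n$, which is weak rigidity. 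So the real content lies in $\eqref{uni. rigid:1} \Rightarrow \eqref{uni. rigid:2}$ and $\eqref{uni. rigid:4} \Rightarrow \eqref{uni. rigid:1}$.

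For $\eqref{uni. rigid:1} \Rightarrow \eqref{uni. rigid:2}$, the key observation is that the induced map $T_K$ is ``$d_H$-nonexpansive with respect to the dynamics of $T$'' in the following precise sense: if $d(T^m x, x) < \ep$ for every $x \in X$, then for any $A \in K(X)$ we have $d_H(T^m A, A) \le \ep$, because $T^m A \subset B_\ep(A)$ and $A \subset B_\ep(T^m A)$ (the latter uses that $T$ is onto, so every point of $A$ is $T^m$ of some point, but actually we only need: for $a \in A$, $T^m a \in T^m A$ and $d(T^m a, a) < \ep$ gives $a \in B_\ep(T^m A)$; and $T^m a \in B_\ep(A)$ since $d(T^m a, a) < \ep$ and $a \in A$). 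Thus the sequence $m_i$ witnessing uniform rigidity of $(X,T)$ simultaneously witnesses uniform rigidity of $(K(X),T_K)$, since $\sup_{A \in K(X)} d_H(T_K^{m_i} A, A) \le \ep_i \to 0$.

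For $\eqref{uni. rigid:4} \Rightarrow \eqref{uni. rigid:1}$, which I expect to be the main obstacle, I would argue as follows. Assume $(K(X),T_K)$ is weakly rigid; in particular it is $1$-rigid, so $(X,T)$ is $1$-rigid (pointwise recurrent) and hence invertible with $T$ surjective, and one can restrict attention to an appropriate framework. Fix $\ep > 0$ and a finite $\ep/2$-net $\{x_1, \dots, x_n\}$ of $X$. Consider the $n$-tuple $(\{x_1\}, \dots, \{x_n\})$, or more conveniently the single point $A_0 = \{x_1, \dots, x_n\} \in K(X)$. By weak rigidity (applied either to $K(X)^n$ via the $n$-tuple of singletons, or directly: $A_0$ is a recurrent point of $(K(X), T_K)$), there is $m$ with $d_H(T_K^m A_0, A_0)$ small, say $< \ep/2$. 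This gives a permutation-like near-matching: for each $j$ there is $k$ with $d(T^m x_j, x_k) < \ep/2$. The difficulty is to upgrade this ``coarse'' control on a finite net to uniform control $d(T^m x, x) < \ep$ for \emph{all} $x \in X$ — the near-matching could a priori be a nontrivial permutation of the net, so $T^m$ might move points by a large amount even while permuting the net almost-isometrically. The fix is to use a much larger finite structure: take the tuple of \emph{all} singletons from a fine net \emph{together with} their translates, or better, observe that weak rigidity of $K(X)$ gives, for the specific compact set $A_0$ and along a sequence $m_i \to \infty$, that $T_K^{m_i} A_0 \to A_0$; passing to the Ellis semigroup of $(K(X), T_K)$ and using that $1$-rigidity of all finite products forces an idempotent in $E(K(X),T_K)$ fixing a dense set, one deduces $\id_{K(X)} \in E(K(X), T_K)$. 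Then $\id_{K(X)} \in E(K(X),T_K)$ means there is a net $T_K^{m_\lambda} \to \id$ in $(K(X))^{K(X)}$; evaluating at the single point $X \in K(X)$ gives $T_K^{m_\lambda} X = T^{m_\lambda}(X) = X$ trivially, which is not enough — instead evaluate at finite sets and at singletons simultaneously, or note $\id_{K(X)} \in E(K(X),T_K)$ restricts (via the natural embedding $X \hookrightarrow K(X)$, $x \mapsto \{x\}$, which intertwines $T$ and $T_K$) to give $\id_X \in E(X,T)$. Finally, the classical fact (Glasner--Maon \cite{GM89}, or Auslander \cite{A88}) that $\id_X \in E(X,T)$ together with equicontinuity-type compactness of $E(X,T)$ is equivalent to uniform rigidity of $(X,T)$ closes the loop: concretely, $\id_X \in E(X,T)$ means for every $\ep$ and every finite set $x_1, \dots, x_k$ there is $m$ with $\max_i d(T^m x_i, x_i) < \ep$, but to get the \emph{uniform} statement one uses that $E(X,T)$ contains $\id$ and is a subset of $C(X,X)$ when $(X,T)$ is suitably tame — here the correct route is that weak rigidity of $K(X)$ directly yields, for each $\ep$, the compact set $N_\ep := \{m : d_H(T_K^m A, A) < \ep \ \forall A \in K_N(X)\}$ nonempty for the net size $N = N(\ep)$, and a compactness argument on $K(X) \setminus \langle \text{small balls}\rangle$ promotes net-control to uniform control. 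I would carry this out by contradiction: if $(X,T)$ were not uniformly rigid, there is $\ep_0 > 0$ such that for every $m \in \Z$ there is $z_m \in X$ with $d(T^m z_m, z_m) \ge \ep_0$; I then build a compact set $A \in K(X)$ (e.g. a limit of sets $\{z_{m_1}, T z_{m_1}, \dots\}$ or a Cantor-type set absorbing witnesses for all $m$) whose orbit under $T_K$ stays $\ep_0$-away from $A$ — contradicting that $A$, being a point of the $1$-rigid system $(K(X),T_K)$, is recurrent. The construction of such an absorbing $A$ is the technical heart; one natural candidate is $A = \overline{\{z_{m} : m \in \Z\}}$ is insufficient, so instead I expect to need to iterate: enumerate the (countably many) relevant $m$'s, and diagonalize to produce finitely-supported-then-closed sets each of which defeats infinitely many candidate return times, assembling a single $A$ that is $T_K$-nonrecurrent. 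Getting this diagonalization to respect the Hausdorff metric while defeating \emph{all} return times simultaneously is the step I anticipate will require the most care.
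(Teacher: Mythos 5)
Your handling of \eqref{uni. rigid:1}$\Rightarrow$\eqref{uni. rigid:2} and of the trivial implications \eqref{uni. rigid:2}$\Rightarrow$\eqref{uni. rigid:3}$\Rightarrow$\eqref{uni. rigid:4} is correct and matches the paper. The gap is in \eqref{uni. rigid:4}$\Rightarrow$\eqref{uni. rigid:1}, which you correctly identify as the main obstacle but never actually close. You diagnose the right difficulty (recurrence of a finite net of singletons, or of their union as a single element of $K(X)$, only yields a near-permutation of the net and says nothing about points off the net), but the Ellis-semigroup detour does not resolve it --- $\id_{K(X)}\in E(K(X),T_K)$ gives simultaneous approximate return of any \emph{finite} family of compact sets, which is exactly weak rigidity restated, not uniform rigidity --- and your final contradiction strategy is left explicitly unfinished. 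Worse, as sketched it only invokes the fact that a single compact set $A$ is a recurrent point of the $1$-rigid system $(K(X),T_K)$; but whether $1$-rigidity (pointwise recurrence) of $(K(X),T_K)$ already forces uniform rigidity of $(X,T)$ is precisely Problem~\ref{pro-1} of the paper, which the authors leave open and expect to have a \emph{negative} answer. So any argument for \eqref{uni. rigid:4}$\Rightarrow$\eqref{uni. rigid:1} must genuinely use $n$-rigidity of $K(X)$ for large $n$, which your contradiction scheme does not.

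The missing idea is short: fix $\ep>0$, cover $X$ by finitely many balls $B(x_1,\ep/4),\dots,B(x_n,\ep/4)$, and apply $n$-rigidity of $(K(X),T_K)$ to the $n$-tuple of \emph{closed balls} $A_i=\ol{B(x_i,\ep/4)}$, not to singletons. Recurrence of $(A_1,\dots,A_n)$ in $(K(X)^n,T_K^{(n)})$ produces a single $j$ with $T_K^jA_i\in\langle B(x_i,\ep/2)\rangle$, i.e.\ $T^jA_i\subset B(x_i,\ep/2)$, for \emph{all} $i$ simultaneously. Since every $x\in X$ lies in some $A_i$, both $x$ and $T^jx$ lie in $B(x_i,\ep/2)$, whence $d(T^jx,x)<\ep$ for every $x$. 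Using the balls themselves as the entries of the tuple is what converts Hausdorff-metric recurrence into pointwise-uniform control and sidesteps the permutation problem entirely.
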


\begin{proof}
First, we show that $\eqref{uni. rigid:1} \Longrightarrow \eqref{uni. rigid:2}$. Assume that $(X,T)$ is uniformly rigid. That is, given $\ep>0$
there is $n\in \N$ such that $d(T^n x,x)<\ep$ for all $x\in X$. Then for each $A\in K(X)$ we have
$d_H(A, T_K^n A)\leq \max_{x\in A} d(T^n x,x)<\ep$. This implies that $(K(X),T_K)$ is uniformly rigid.

Implications $\eqref{uni. rigid:2} \Longrightarrow \eqref{uni. rigid:3}\Longrightarrow \eqref{uni. rigid:4}$ follow by definition, hence it remains to show $\eqref{uni. rigid:4} \Longrightarrow \eqref{uni. rigid:1}$.
Assume that $(K(X),T_K)$ is weakly rigid and fix any $\ep>0$. Let points $x_1,\ldots,x_n\in X$ be such that $X=\bigcup_{i=1}^nB(x_i,\ep/4)$. Denote $A_i=\ol{B(x_i,\ep/4)}\in K(X)$
and observe that since $(K(X),T_K)$ is $n$-rigid for each $n\in\N$, it follows that $(A_1,\dots,A_n)$ is a
recurrent point of $(K(X)^n,T_K^{(n)})$. Thus, there is $j\in\Z$ such that
$$(T_K^{(n)})^j(A_1,\dots,A_n)\in \langle B(x_1,\ep/2)\rangle\times \dots\times \langle B(x_n,\ep/2)\rangle.$$
Therefore $T^j_K A_i\in \langle B(x_i,\ep/2)\rangle$ which equivalently means that $T^j A_i\subset B(x_i,\ep/2)$, where $i=1,\dots,n$. For every $x\in X$ we can find $i$ such that $x\in A_i$ and so $d(T^j x,x)<\ep$ for each $x\in X$. This shows that $(X,T)$
is uniformly rigid, completing the proof.
\end{proof}

\section{Pointwise recurrence}


In this section we focus on the pointwise recurrence on hyperspace. Firstly we prove a few general facts on pointwise recurrence.

\begin{prop}\label{fact}
Let $(X,T)$ be a t.d.s. and $(K(X),T_K)$ be pointwise positively recurrent. Then we have:
\begin{enumerate}
\item\label{fact:1} Every non-trivial minimal subsystem of $(X,T)$ is not mildly mixing. Particularly, if $(X,T)$ is minimal and $\#X>1$ then $(X,T)$ is not mildly mixing;
\item\label{fact:2} $(X,T)$ is positively weakly rigid.

\end{enumerate}
\end{prop}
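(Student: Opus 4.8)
The plan is to exploit the fact that the orbit of a singleton $\{x\} \in K(X)$ under $T_K$ sits inside $K_1(X)$, which is canonically homeomorphic to $X$, so pointwise positive recurrence in $K(X)$ forces every point of $X$ to be positively recurrent, and — more importantly — gives recurrence for larger finite sets as well. For part \eqref{fact:2}, I would fix $n$ and points $x_1, \dots, x_n \in X$, form $A = \{x_1, \dots, x_n\} \in K(X)$, and use that $A$ is a positively recurrent point of $(K(X), T_K)$: there is $n_i \to +\infty$ with $d_H(T_K^{n_i} A, A) \to 0$. The subtlety is that $d_H(T_K^{n_i}A, A) \to 0$ says $\{T^{n_i}x_1, \dots, T^{n_i}x_n\}$ converges to $\{x_1, \dots, x_n\}$ as a set, not coordinatewise; passing to a subsequence I may arrange that each $T^{n_i}x_j$ converges, and the limit set equals $A$, but the correspondence $j \mapsto \lim_i T^{n_i}x_j$ need be only a surjection of $\{1,\dots,n\}$ onto $\{x_1,\dots,x_n\}$, possibly a permutation composed with collapses. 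Nonetheless, by a further diagonal/pigeonhole argument — iterating along the sequence $n_i$ and composing the induced self-maps of the finite set $A$ until one returns to the identity permutation (a finite set has finitely many self-maps, and the positively recurrent ones can be iterated via the $IP$-structure of the return-time set from Lemma~1, or simply by a compactness argument in $X^n$) — I can extract $m_i \to +\infty$ with $T^{(n)}$-orbit of $(x_1,\dots,x_n)$ returning close to itself, i.e. $(x_1,\dots,x_n)$ is positively recurrent. Hence $(X,T)$ is positively weakly rigid. I expect this bookkeeping with the finitely many self-maps of $A$ to be the main technical obstacle, though it is routine.

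For part \eqref{fact:1}, suppose toward a contradiction that $(X,T)$ has a non-trivial minimal subsystem $(M,T)$ that is mildly mixing, so in particular $M$ is weakly mixing and $\#M > 1$. By Lemma~\ref{w.m.}(2) no point of $M$ is distal, but I will instead use mild mixing directly on the hyperspace level: pick two distinct points $a \neq b$ in $M$ and consider $A = \{a,b\} \in K(M) \subset K(X)$; I want to show $A$ is not positively recurrent in $(K(X), T_K)$. The idea is that mild mixing of $(M,T)$ implies $(M \times M, T\times T)$ is transitive with $(M\times M) \setminus \Delta_2$ containing a transitive point, or more precisely that $N((U_1 \times U_2),(V_1 \times V_2))$ is "large" enough (an $IP^*$-type fatness coming from mild mixing) to separate the two points of $A$; choosing a small neighborhood $U$ of $a$ and $V$ of $b$ with $\overline{U} \cap \overline{V} = \emptyset$, the return set $N(A, \langle U, V\rangle)$ would have to avoid an $IP$-set built from the mild-mixing obstruction, contradicting positive recurrence of $A$ (which by Lemma~1(1) forces $N(A, \langle U,V\rangle)$ to be an $IP$-set). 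I would realize the obstruction concretely by using that a mildly mixing $(M,T)$ has the property that for any non-dense orbit situation the relevant transfer sets are thick, so one can force $T^{n}a$ and $T^n b$ to drift apart (one near $a$, one near $b$, but with the pairing swapped, or one of them escaping a given small ball) along a syndetic or $IP$-set of times — contradicting that infinitely many returns of $A$ keep $both$ points near $\{a,b\}$ in the Hausdorff metric. The particular case in the statement is then immediate: if $(X,T)$ itself is minimal with $\#X>1$ and were mildly mixing, it is its own non-trivial mildly mixing minimal subsystem.

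The hard part will be making the mild-mixing obstruction in \eqref{fact:1} quantitative enough to contradict the $IP$-set structure of the return times of $A$: mild mixing is a statement about products $X \times Y$ for all transitive $(Y,S)$, so I would first reduce it to a usable internal consequence — e.g. that $(M,T)$ being mildly mixing implies it has no non-trivial rigid factor and that $E(M,T)$ contains no non-trivial idempotent fixing a point "rigidly", or more concretely, that for distinct $a, b$ the set $\{n : d(T^n a, a) < \delta \text{ and } d(T^n b, b) < \delta\}$ is not an $IP$-set when $\delta$ is small. Actually I suspect the cleanest route is: $A = \{a, b\}$ positively recurrent in $K(M)$ means (taking subsequences as above) there is $m_i \to \infty$ with either $(T^{m_i}a, T^{m_i}b) \to (a,b)$ or $(T^{m_i}a, T^{m_i}b) \to (b,a)$; in the first case $(a,b)$ is a recurrent point of $(M\times M, T\times T)$ off the diagonal, and in the second $(a,b)$ is recurrent for $(T\times T)^2$, so in either case the orbit closure of $(a,b)$ in $M \times M$ is a non-trivial transitive subsystem disjoint-from-nothing; but mild mixing of $(M,T)$ should preclude $(M\times M, T\times T)$ having a recurrent point off the diagonal whose orbit closure is "too small", and I would pin down this preclusion using the characterization of mild mixing via the non-existence of non-trivial rigid factors together with Lemma~\ref{pubd-ip}. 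I expect assembling this last contradiction cleanly — deciding exactly which standard characterization of mild mixing to invoke — to be where most of the care is needed.
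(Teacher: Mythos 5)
Your part \eqref{fact:2} is essentially correct and amounts to the same mechanism as the paper's: the paper notes that $(X^n,T^{(n)})$ is an extension of $(K_n(X),T_K)$ via $(x_1,\dots,x_n)\mapsto\{x_1,\dots,x_n\}$ and lifts the positively recurrent point $\{x_1,\dots,x_n\}$ to a positively recurrent tuple in its fibre, necessarily a permutation of $(x_1,\dots,x_n)$, whose recurrence is equivalent to that of $(x_1,\dots,x_n)$. Your by-hand composition of the induced self-maps of the finite set $A$ is just an unpacking of that standard lifting lemma; note also that when the $x_j$ are distinct the limit correspondence is automatically a bijection (it is a surjection of an $n$-element set onto an $n$-element set), so the ``collapses'' you worry about cannot occur.

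Part \eqref{fact:1}, however, has a genuine gap. Positive recurrence of the doubleton $A=\{a,b\}$ in $K(X)$ only yields that $(a,b)$ is a positively recurrent point of $(M\times M,T\times T)$ (after composing twice to undo a possible flip). That is no obstruction to mild mixing: for any weakly mixing $M$ the product $M\times M$ is transitive, hence has a residual set of recurrent points off the diagonal. The claim you would actually need --- that for a minimal mildly mixing system and distinct $a,b$ the set $\{n\colon d(T^na,a)<\delta,\ d(T^nb,b)<\delta\}$ is not an $IP$-set for small $\delta$ --- is precisely the assertion that $M$ is not positively $2$-rigid, and nothing in your sketch or in the cited lemmas establishes it; it is adjacent to questions the authors themselves leave open (see Remark~\ref{rem}). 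The paper avoids this entirely by applying pointwise recurrence of $K(X)$ to a closed set $A=\overline{W}\cap Y$ with \emph{nonempty interior} in the minimal subsystem $Y$: by \cite{HY04}, minimality plus mild mixing makes $N(U',V)$ an $IP^*$-set for all nonempty open $U',V$ meeting $Y$, so $N(A,V)\supset N(W\cap Y,V)$ is $IP^*$, while recurrence of $A$ in the hyperspace makes $N(A,\langle U\rangle)$ an $IP$-set; these two sets must intersect, producing a time $j$ with $T^jA\subset U$ and $T^jA\cap V\neq\emptyset$ although $\overline{U}\cap\overline{V}=\emptyset$. The nonempty interior of $A$ is what converts the open-set transfer property into a constraint on the single hyperspace point $A$; a two-point set has empty interior in an infinite minimal set and cannot play this role, so your route does not close without importing this idea.
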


\begin{proof}
First we prove \eqref{fact:1}. By \cite{HY04} if $(Y,T)$ is minimal and mildly mixing then for every pair of non-empty open
subsets $U$ and $V$, the set of transfer times $N(U,V)$ is an $IP^*$-set. Let $(Y,T)$ be a mildly mixing minimal subsystem
of $(X,T)$ and $\#Y>1$. Then we can find open sets $U,V$ intersecting $Y$ and such that $\ol{U}\cap \ol{V}=\emptyset$.
Take any open set $W$ such that $W\subset \ol{W}\subset U$ and $W\cap Y\neq \emptyset$. Denote $A=\ol{W}\cap Y\in K(X)$.
Since $(K(X),T_K)$ is pointwise positively recurrent, $A$ is positively recurrent and so $N(A,\langle U \rangle)$ is an $IP$-set.
But $A$ has non-empty interior in $Y$ and so $N(A,V)$ is an $IP^*$-set. In particular,
$N(A,V)\cap N(A,\langle U \rangle)\neq \emptyset$, which implies that $T^jA\cap V\neq \emptyset$ and $T^j A \subset U$
for some $j\in \N$. This is a contradiction, because  $U\cap V=\emptyset$.

Now, let us proceed with the proof of \eqref{fact:2}.
It is known that if $\pi\colon Y\to Z$ is a factor map and $z\in Z$ is positively recurrent then
there is a positively recurrent point $y\in Y$ such that $\pi(y)=z$. Now assume that $(K(X),T_K)$ is
pointwise positively recurrent. Then this property is shared by each subsystem $(K_n(X),T_K)$, where $n=1,2,\ldots$. Since $(X^n, T^{(n)})$ is
an extension of $(K_n(X), T_K)$, for each
$(x_1,\dots,x_n)\in X^n$ we obtain some permutation $(i_1,\dots, i_n)$ of $(1,\dots,n)$ such that $(x_{i_1},\dots,x_{i_n})$ is positively recurrent. But dynamics of $\{x_1,\dots,x_n\}$ under $T^n$ is exactly the same as that of $(x_{i_1},\dots,x_{i_n})$ which shows that
$\{x_1,\dots,x_n\}$ is positively recurrent in $(X^n, T^{(n)})$. We obtain that $(X^n, T^{(n)})$ is pointwise positively
recurrent, which shows that  $(X,T)$ is positively $n$-rigid for each $n\in\N$. The proof is finished.
\end{proof}

\begin{rem}\label{rem}
\begin{enumerate}
\item\label{rem:1} In \cite[Theorem 4.3]{HY04} the authors showed that a non-trivial uniformly rigid system cannot be mildly mixing. So comparing with Proposition~ \ref{fact}(1), one can ask if there exists a mildly mixing system such that the hyperspace is positively 1-rigid? We do not know whether this is true.

\item\label{rem:2} Note that if $(X,T)$ is positively 2-rigid then it has no asymptotic pairs, hence $T$ is a homeomorphism. Hence by Proposition~ \ref{fact}(2) if $(K(X),T_K)$ is pointwise recurrent then $(X,T)$ is an invertible t.d.s..
\end{enumerate}
\end{rem}
Combining Theorem~\ref{uni. rigid} and Proposition \ref{fact} we obtain the following.
\begin{cor} \label{two-sided UR}
Let $(X,T)$ be an invertible t.d.s. The following are equivalent:
\begin{enumerate}
\item $(X,T)$ is uniformly rigid;
\item $(K(X),T_K)$ is uniformly rigid;
\item $(K(X),T_K)$ is rigid;
\item $(K(X),T_K)$ is weakly rigid;
\item $(K(K(X)),T_K)$ is pointwise recurrent.
\end{enumerate}
\end{cor}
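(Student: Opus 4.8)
The plan is to deduce Corollary~\ref{two-sided UR} from Theorem~\ref{uni. rigid} (applied twice) together with Proposition~\ref{fact}, so that essentially no new work is required beyond bookkeeping. Since $(X,T)$ is assumed invertible, the equivalence of the first four conditions is exactly Theorem~\ref{uni. rigid}. Thus the whole task reduces to inserting condition (5), i.e. showing that $(K(K(X)),T_K)$ is pointwise recurrent if and only if $(X,T)$ is uniformly rigid. The natural route is a cycle: uniform rigidity of $(X,T)$ implies uniform rigidity of $(K(X),T_K)$ (by $\eqref{uni. rigid:1}\Rightarrow\eqref{uni. rigid:2}$ of Theorem~\ref{uni. rigid}), which implies uniform rigidity of $(K(K(X)),T_K)$ (applying the same implication once more, with $X$ replaced by $K(X)$), which trivially implies that $(K(K(X)),T_K)$ is pointwise recurrent. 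For the converse I would run Proposition~\ref{fact}.

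First I would observe that $(K(X),T_K)$ is a subsystem of $(K(K(X)),T_K)$, via the embedding $A\mapsto\{A\}$ (singletons of $K(X)$), which intertwines $T_K$ with $T_K$; hence if $(K(K(X)),T_K)$ is pointwise recurrent then so is $(K(X),T_K)$. Next, I would apply Proposition~\ref{fact}\eqref{fact:2} \emph{with $K(X)$ in place of $X$}: since $(K(K(X)),T_K)$ is pointwise (positively) recurrent — here one should first note that by Remark~\ref{rem}\eqref{rem:2} applied to $K(X)$, the map $T_K$ on $K(X)$ is already known to be a homeomorphism when $(K(K(X)),T_K)$ is pointwise recurrent, so pointwise recurrence and pointwise positive recurrence coincide, or alternatively one just invokes the weak-rigidity conclusion — we conclude that $(K(X),T_K)$ is positively weakly rigid, in particular weakly rigid. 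Then condition~\eqref{uni. rigid:4} of Theorem~\ref{uni. rigid} holds, and the theorem gives that $(X,T)$ is uniformly rigid, i.e. condition~(1). This closes the cycle and establishes all five equivalences.

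The only mild subtlety — and the one place where I would be careful rather than cavalier — is the passage between ``pointwise recurrent'' and ``pointwise positively recurrent'', since Proposition~\ref{fact} is stated for the positive version while Corollary~\ref{two-sided UR} speaks of two-sided recurrence. The clean fix is exactly Remark~\ref{rem}\eqref{rem:2}: if $(K(K(X)),T_K)$ is pointwise recurrent, then it is in particular positively $2$-rigid on the relevant subsystems, so $T_K$ on $K(X)$ has no asymptotic pairs and is therefore invertible, after which positive recurrence and recurrence agree; alternatively one notes that weak rigidity already yields the conclusion without worrying about orientation. I expect this orientation issue to be the ``main obstacle,'' but it is really a formality given the remarks already in place.

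Concretely, the proof I would write is: ``Since $(X,T)$ is invertible, the equivalence of (1)--(4) is Theorem~\ref{uni. rigid}. For $(1)\Rightarrow(5)$: if $(X,T)$ is uniformly rigid then by Theorem~\ref{uni. rigid} so is $(K(X),T_K)$, and applying Theorem~\ref{uni. rigid} again to the t.d.s. $(K(X),T_K)$ we get that $(K(K(X)),T_K)$ is uniformly rigid, hence pointwise recurrent. For $(5)\Rightarrow(4)$: assume $(K(K(X)),T_K)$ is pointwise recurrent. By Remark~\ref{rem}\eqref{rem:2} (applied with $K(X)$ in place of $X$) the map $T_K$ on $K(X)$ is a homeomorphism and $(K(K(X)),T_K)$ is in fact pointwise positively recurrent; by Proposition~\ref{fact}\eqref{fact:2} (applied with $K(X)$ in place of $X$) the system $(K(X),T_K)$ is positively weakly rigid, in particular weakly rigid, which is condition~(4). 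Combining, all five conditions are equivalent.''
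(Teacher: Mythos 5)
Your proposal is correct and follows the paper's own route: the corollary is obtained there exactly by combining Theorem~\ref{uni. rigid} (applied to $X$ and then again to $K(X)$) with Proposition~\ref{fact}\eqref{fact:2} applied with $K(X)$ in place of $X$. One small caution on the orientation issue you flag: invertibility of $T_K$ (Remark~\ref{rem}\eqref{rem:2}) does not by itself upgrade pointwise recurrence to pointwise \emph{positive} recurrence (an invertible system can have points recurrent in only one direction), so of your two proposed fixes the correct one is the second --- the lifting argument in the proof of Proposition~\ref{fact}\eqref{fact:2} works verbatim for negatively recurrent points, yielding the two-sided weak rigidity of $(K(X),T_K)$ that condition (4) requires.
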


By \cite[Propostion 6.7]{GM89} we know that uniform rigidity is equivalent to pointwise recurrence
on $(K(X),T_K)$ when $(X,T)$ is a zero-dimensional and minimal t.d.s. Later we would show this is
also true for some class of minimal distal systems or the case of $X$ being countable. By what we have
proved it is easy to see
$$X\ \text{uniform rigidity}\ \Longrightarrow K(X)\ \text{pointwise recurrence} \Longrightarrow X\ \text{weak rigidity}$$

In Example \ref{ex1}, $(X,T)$ is weakly rigid and $K(X)$ is not pointwise recurrent. The unsolved
problem is the following

\begin{prob}\label{pro-1}
Does there exist a t.d.s. $(X,T)$ such that the hyperspace is pointwise recurrent, but $(X,T)$ is not
uniformly rigid?
\end{prob}

We strongly believe that such an example exists, though we could not provide one at this moment.

\medskip
Now we consider the (topological) entropy of hyperspace. A remarkable result
by Glasner and Weiss \cite{GW95} is that there exists a minimal system $(X,T)$ of zero entropy
with a minimal subsystem $(Y,T_K)$ of $(K(X),T_K)$ whose entropy is positive. From Remark \ref{rem}\eqref{rem:2}
and the fact that weakly rigid t.d.s. has zero entropy, we know that if $(K(X),T_K)$ is pointwise recurrent,
then $X$ has zero entropy. So we ask if the entropy of  $(K(X),T_K)$ is also zero in this case? We will answer this question
affirmatively by showing a stronger result. To start
with we need the notion of {\it locally recurrent system}, which can be found in Gottschalk and Hedlund's book \cite{GH55}.

\begin{de}
Let $(X,T)$ be an invertible t.d.s. A point $x\in X$ is called \textit{locally positively recurrent} if for each neighborhood $U$ of $x$, there are a neighborhood $V$ of $x$ and an $IP$-set $Q\subset \N$ such that $T^l V\subset U$ for all $l\in Q\bigcup \{0\}$. We say $(X,T)$
is a \textit{locally positively recurrent system} if all points in $(X,T)$ are locally positively recurrent.

In a similar manner we can define \textit{locally negatively recurrent} and \textit{locally recurrent}  points and systems.
\end{de}

\begin{rem}
\begin{enumerate}
\item
It is no hard to check the following examples:
\begin{enumerate}[(a)]
\item the Denjoy minimal system $(X,T)$ is locally recurrent
but not $2$-rigid. To see this it is enough to note that factor map
$\pi\colon X\to \T^1$ defninig Denjoy minimal system is semi-open;
\item $(\T^2,T)$ in Example \ref{ex1} is weakly rigid but not locally
recurrent. Simply for any $\ep>0$ diameter of the set $(-\ep,\ep)\times \{y\}$ starts to exceed $1/2$ after sufficiently many iterations of $T$;
\item the rigid but not uniformly rigid system described in \cite{GM89} is also not
locally recurrent (it is sufficient to check the point $(1,0)$ and its neighborhoods).
\end{enumerate}

\item From the definition we know that the local recurrence of $X$ implies its $1$-rigidity. Also,
we observe that if $K(X)$ is 1-rigid, then $X$ is locally recurrent. Since the proof is similar to
the one in Theorem \ref{p.w. min.}, we omit the simple verification.

	\end{enumerate}
\end{rem}

Topological entropy of a t.d.s. $(X,T)$, denoted by $h_{\textrm{top}}(X,T)$, measures the complex of the system.
The notion of an entropy pair was introduced by Blanchard in \cite{B93}.
Among other things, Blanchard showed that a t.d.s. has positive entropy
if and only if there exists an entropy pair. In \cite{HY06} a characterization of an entropy pair was obtained
using interpolating set. This approach was further extended in \cite{KL07} after reformulation using the notion of an independence set.
The following fact is a special case of \cite[Lemma~3.4]{KL07}.

\begin{prop} \label{ent. pair}
Let $(X,T)$ be a t.d.s. Then $(x_1,x_2)\in X^2\setminus \Delta_2$ is an entropy pair if and only if for
each neighborhood $U_i$ of $x_i$ for $i=1,2$, there is an independence set of positive density for
$(U_1,U_2)$, i.e. there is a subset $J\subset \Z$ with positive density such that for any
non-empty finite subset $I\subset J$, we have
$$\bigcap_{i\in I}\nolimits T^{-i}U_{s(i)}\neq\emptyset$$
for any $s\in \{1,2\}^I$.
\end{prop}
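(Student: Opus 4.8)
The plan is to obtain this proposition as the two-coordinate instance of the independence--entropy machinery of Kerr and Li: the assertion is essentially \cite[Lemma~3.4]{KL07} read with $k=2$, so the work consists in recalling the ingredients of that circle of ideas and in checking that the present formulation matches theirs.

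First I would recall Blanchard's definition in a convenient form: for $x_1\neq x_2$, the pair $(x_1,x_2)$ is an entropy pair if and only if for every pair of disjoint closed neighborhoods $C_i\ni x_i$ ($i=1,2$) the two-element open cover $\{X\setminus C_1,X\setminus C_2\}$ has positive topological entropy. Passing to complements turns a disjoint pair $\{C_1,C_2\}$ into a pair of open neighborhoods of $x_1$ and $x_2$ (after relabelling the two coordinates), and the property ``$(U_1,U_2)$ admits a positive-density independence set'' is monotone under enlarging $U_1,U_2$; hence quantifying over all such covers is equivalent to quantifying over all open neighborhoods $U_i\ni x_i$. What must therefore be proved is that $(x_1,x_2)$ is an entropy pair if and only if for all open neighborhoods $U_i\ni x_i$ there is a positive-density independence set for $(U_1,U_2)$ in the sense of the statement, i.e.\ a set $J\subset\Z$ with $\bigcap_{i\in I}T^{-i}U_{s(i)}\neq\emptyset$ for every finite $I\subset J$ and every $s\in\{1,2\}^I$.

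Second comes the combinatorial core, which is where \cite[Lemma~3.4]{KL07} does the work. By the entropy-pair theory (see \cite{B93, HY06}), positivity of the entropy of the relevant two-element cover is detected by exponential growth in $m$ of the number of admissible patterns $s\in\{1,2\}^{\{0,\dots,m-1\}}$, those with $\bigcap_{i<m}T^{-i}U_{s(i)}\neq\emptyset$. The Sauer--Shelah lemma then converts this exponential growth into the existence, for infinitely many $m$, of subsets $I\subset\{0,\dots,m-1\}$ with $\#I\geq cm$ that are shattered, i.e.\ independent for $(U_1,U_2)$. Finally a pigeonhole-and-compactness argument along a sequence of intervals whose lengths tend to infinity upgrades these linear-size finite independence sets to a single infinite set $J\subset\Z$ of positive density that is independent for $(U_1,U_2)$ (at worst one obtains $\BD^*(J)>0$, and with care in the choice of the intervals one obtains positive density). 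The converse implication, that an independence set of positive density for $(U_1,U_2)$ forces positive entropy of the cover, is the comparatively straightforward direction.

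Assembling these steps gives the proposition. The part that genuinely requires care --- and hence the main obstacle --- is essentially bookkeeping rather than new mathematics: one must verify that the independence notion in the statement coincides verbatim with that of \cite{KL07}, that the complementation-and-relabelling in the first step really is harmless, and that \cite[Lemma~3.4]{KL07} delivers an independence set of \emph{positive density} (and not merely arbitrarily long finite independence sets, nor an independence set of merely positive upper Banach density). Once these translations are in place, the proposition is exactly the case $k=2$, alphabet $\{1,2\}$, of \cite[Lemma~3.4]{KL07}.
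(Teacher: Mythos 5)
Your proposal matches the paper exactly: the paper offers no independent proof of this proposition and simply records it as a special case of \cite[Lemma~3.4]{KL07}, which is precisely the reduction you carry out (with some extra detail on the internals of the Kerr--Li argument). Correct, and essentially the same approach.
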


After this brief introduction into theory of entropy pairs we are ready to prove the following.

\begin{thm}\label{LR}
If $(X,T)$ an invertible locally recurrent t.d.s., then $h_{\emph{top}}(K(X),T_K)=0$ and hence $h_{\emph{top}}(X,T)=0$.
\end{thm}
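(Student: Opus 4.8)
The plan is to argue by contradiction. Assume $h_{\mathrm{top}}(K(X),T_K)>0$. By Blanchard's theorem recalled above, there is an entropy pair $(A_1,A_2)\in K(X)^2\setminus\Delta_2$; since $A_1\neq A_2$, after possibly exchanging the two sets we may fix a point $a\in A_1\setminus A_2$, and as $A_2$ is closed we have $\eta:=d(a,A_2)>0$. Fix $\ep\in(0,\eta)$ and apply the local recurrence hypothesis at $a$ to the neighbourhood $B(a,\ep)$: this yields $\delta\in(0,\ep)$ and an $IP$-set $Q\subset\N$ with either (i) $T^l B(a,\delta)\subset B(a,\ep)$ for all $l\in Q$, or (ii) $T^{-l}B(a,\delta)\subset B(a,\ep)$ for all $l\in Q$. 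I will treat case (i); case (ii) is obtained below by the mirror choice of labels.

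Now consider the Vietoris-open sets
\[
  \mathcal{U}_1=\langle B(a,\delta),X\rangle=\{A\in K(X)\colon A\cap B(a,\delta)\neq\emptyset\},
  \qquad
  \mathcal{U}_2=\langle X\setminus\overline{B(a,\ep)}\rangle,
\]
so that $A_1\in\mathcal{U}_1$ and $A_2\in\mathcal{U}_2$ (for the latter, $\overline{B(a,\ep)}\subset B(a,\eta)$ misses $A_2$). Since $(A_1,A_2)$ is an entropy pair, Proposition~\ref{ent. pair} provides a set $J\subset\Z$ of positive density — in particular of positive upper Banach density — that is an independence set for $(\mathcal{U}_1,\mathcal{U}_2)$.

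The core step is to collide $J$ with $Q$. By Lemma~\ref{pubd-ip} applied to $J$ and the $IP$-set $Q$ there is $l\in Q$ with $\BD^*(J\cap(J-l))>0$; in particular there is $m$ with $m\in J$ and $m+l\in J$. Applying independence of $J$ to $\{m,m+l\}$ with the assignment $s(m)=1$, $s(m+l)=2$, we get $B\in K(X)$ with $T_K^{m}B\in\mathcal{U}_1$ and $T_K^{m+l}B\in\mathcal{U}_2$. From $T_K^{m}B\in\mathcal{U}_1$ choose $x\in B$ with $T^m x\in B(a,\delta)$; then, since $l\in Q$,
\[
  T^{m+l}x=T^l(T^m x)\in T^l B(a,\delta)\subset B(a,\ep)\subset\overline{B(a,\ep)},
\]
and because $x\in B$ this gives $T_K^{m+l}B\cap\overline{B(a,\ep)}\neq\emptyset$, contradicting $T_K^{m+l}B\in\mathcal{U}_2=\langle X\setminus\overline{B(a,\ep)}\rangle$. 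In case (ii) one instead takes $s(m+l)=1$, $s(m)=2$, picks $x\in B$ with $T^{m+l}x\in B(a,\delta)$ and applies $T^{-l}$ to land $T^m x\in\overline{B(a,\ep)}$, again contradicting $T_K^m B\in\mathcal{U}_2$. Hence $h_{\mathrm{top}}(K(X),T_K)=0$; since $x\mapsto\{x\}$ embeds $(X,T)$ as the closed invariant subsystem $(K_1(X),T_K)$ of $(K(X),T_K)$, this also gives $h_{\mathrm{top}}(X,T)=0$.

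The individual steps are not deep, so the main thing to get right is the index bookkeeping in the last paragraph: one must match the direction of the recurrence inclusion ($T^{l}$ versus $T^{-l}$) with which of $m,m+l$ carries the label forcing membership in $B(a,\delta)$, so that a single application of $T^{\pm l}$ carries the orbit point from the small ball into $\overline{B(a,\ep)}$ exactly at the time whose label forbids it. One should also confirm that Proposition~\ref{ent. pair} is read with the one-sided notion of density (over $[0,n-1]$), so that ``$J$ has positive density'' genuinely entails the positive upper Banach density needed in Lemma~\ref{pubd-ip}.
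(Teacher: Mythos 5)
Your proof is correct and follows essentially the same route as the paper's: a contradiction argument using an entropy pair in $K(X)$, an independence set of positive density from Proposition~\ref{ent. pair}, and a collision of that set with the $IP$-set of local recurrence times via Lemma~\ref{pubd-ip}. The only differences are cosmetic simplifications: you work with the sub-basic neighbourhoods $\langle B(a,\delta),X\rangle$ and $\langle X\setminus\overline{B(a,\ep)}\rangle$ instead of full Vietoris basic sets, and you handle the negatively locally recurrent case by swapping labels in the independence assignment rather than passing to $T^{-1}$ via the invariance of entropy pairs under inversion.
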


\begin{proof}
Assume that $(X,T)$ is locally recurrent. We aim to prove $h_{\textrm{top}}(K(X),T_K)=0$. 
Assume on the contrary that $h_{\textrm{top}}(K(X),T_K)>0$. Then there exists a non-diagonal entropy pair
$(K_1,K_2)\in K(X)^2$ (e.g. see \cite{B93}).  Since $K_1\neq K_2$ without loss of generality we may assume that there is $z\in K_2\setminus K_1$. Then there exists an open set such that $V_1\ni z$ and $K_1\cap \ol{V_1}=\emptyset$. Then there are also open sets $U_1,\ldots,U_m$, $V_2,\ldots, V_n$ such that $V_1\cap U_1=\emptyset$ for every $i=1,\ldots,m$ and $K_1\in \U=\langle U_1,U_2,\dots, U_m \rangle$
and $K_2\in \V=\langle V_1,V_2,\dots, V_n \rangle$.
By assumption $(X,T)$ is locally recurrent, so $z$ is positively locally recurrent or negatively locally recurrent.
But by \cite[Lemma~3.2]{KL07} we see that $(K_1,K_2)$ is an entropy pair also for $T^{-1}$, hence without loss of generality we may
assume that $z$ is positively locally recurrent.
Then there are
an open neighborhood $W_0\subset V_1$ of $z$ and an $IP$-set $Q\subset\N$ with
$$T^l W_0\subset V_{1}\ \text{for all}\ l\in Q\cup \{0\}.$$
Then we can also find non-empty open sets $W_1,\dots,W_s$
such that $$\W=\langle W_0,W_1,\dots, W_s,V_{2},\ldots,V_n\rangle$$ forms
another neighborhood of $K_2$. Let $J$ be a positive density subset of $\N$ associated to $(\U,\W)$ and $T_K$ by Proposition \ref{ent. pair}.

By Lemma \ref{pubd-ip} there is $l\in Q$ such that
$\BD^*(J\cap (J-l))>0$, so there exists at least one $q\in J\cap (J-l)$, Then $q\in J$, $q+l\in J$ and since $J$ is an independence set for $(\U,\W)$, we have
$T^{-q}\W\cap T^{-{(q+l)}}\U\neq \emptyset.$ If $K\in T^{-q}\W\cap T^{-{(q+l)}}\U$
then $T^q K\in \W, T^{q+l} K\in \U$ which in other words mean
$$T^qK\cap W_0\neq \emptyset\ \  \mbox{and}\ \  T^{q+l} K\subset \bigcup_{i=1}^m U_i.$$
Observe that this leads to a contradiction, because
$$ \emptyset = \bigcup_{i=1}^m U_i\cap V_1 \supset T^{q+l} K\cap V_{1}\supset T^{q+l} K\cap T^l W_0 = T^l(T^q  K\cap W_0)\neq \emptyset.$$
We have just proved that $(K(X),T_K)$ does not have entropy pair, hence its entropy is zero.
\end{proof}

As a direct consequence we have:

\begin{cor}\label{zero ent.}
If $(K(X),T_K)$ is pointwise recurrent, then $h_{\emph{top}}(K(X),T_K)=0$.
\end{cor}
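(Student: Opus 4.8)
The plan is to deduce Corollary~\ref{zero ent.} from Theorem~\ref{LR} by verifying that pointwise recurrence of the hyperspace forces $(X,T)$ to be an invertible locally recurrent t.d.s. First I would invoke Remark~\ref{rem}\eqref{rem:2}: if $(K(X),T_K)$ is pointwise recurrent then in particular $(X,T)\subset (K(X),T_K)$ is pointwise recurrent, hence by Proposition~\ref{fact}\eqref{fact:2} it is positively weakly rigid; positive $2$-rigidity rules out asymptotic pairs, so $T$ is a homeomorphism and $(X,T)$ is invertible. This places us in the setting where Theorem~\ref{LR} applies, \emph{provided} we know $(X,T)$ is locally recurrent.

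The remaining step is to show: $(K(X),T_K)$ pointwise recurrent $\Longrightarrow$ $(X,T)$ locally recurrent. This is exactly the content of the second bullet of the Remark preceding Proposition~\ref{ent. pair} (``if $K(X)$ is $1$-rigid, then $X$ is locally recurrent''), whose proof is stated there to be analogous to the relevant part of Theorem~\ref{p.w. min.}. Concretely, fix $x\in X$ and a neighborhood $U$ of $x$, choose an open $V$ with $x\in V\subset\ol{V}\subset U$, and consider $\ol{V}\in\langle U\rangle\subset K(X)$. Since $\ol{V}$ is recurrent in $(K(X),T_K)$, either positively or negatively, the return-time set $N(\ol{V},\langle U\rangle)$ (in the appropriate time direction) is an $IP$-set $Q$ by Lemma~(1) of the preliminaries. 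For every $l\in Q\cup\{0\}$ we then have $T_K^l\ol{V}\in\langle U\rangle$, i.e. $T^lV\subset T^l\ol{V}\subset U$, which is precisely the defining property of $x$ being locally positively (resp. negatively) recurrent with witnessing neighborhood $V$. Since $x$ was arbitrary, $(X,T)$ is locally recurrent.

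With both ingredients in hand, Theorem~\ref{LR} gives $h_{\mathrm{top}}(K(X),T_K)=0$ directly, and the implication $h_{\mathrm{top}}(X,T)=0$ is already folded into that theorem (alternatively it follows since $(X,T)$ is a subsystem of $(K(X),T_K)$, or since $X$ is weakly rigid). I do not expect a genuine obstacle here: the only mild subtlety is bookkeeping the two possible time directions for the recurrence of $\ol{V}$, but Lemma~\ref{pubd-ip} and \cite[Lemma~3.2]{KL07} inside the proof of Theorem~\ref{LR} already handle both a positive $IP$-set and the passage between $T$ and $T^{-1}$, so no new argument is needed. Thus the corollary is essentially a one-line consequence once the ``$K(X)$ $1$-rigid $\Rightarrow$ $X$ locally recurrent'' remark is applied.
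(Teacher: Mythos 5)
Your proposal is correct and takes essentially the same route the paper intends: the paper derives the corollary directly from Theorem~\ref{LR}, using Remark~\ref{rem}\eqref{rem:2} for invertibility and the (explicitly omitted) observation that $1$-rigidity of $K(X)$ implies local recurrence of $X$, which you verify correctly via the $\ol{V}\in\langle U\rangle$ and $IP$-set argument modeled on Theorem~\ref{p.w. min.}. No substantive differences from the paper's approach.
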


\begin{rem}
It is a long open question \cite{W95} whether 2-rigidity implies zero entropy.
Note that under the additional assumption that $X$ is positively 2-rigid, the answer is
affirmative (\cite{W95, BHR, Hoch}).
Theorem \ref{LR} indicates that this question has a
positive answer whenever $X\times X$ is locally recurrent (stronger than $X\times X$ is pointwise recurrent, i.e. $X$ is 2-rigid).
\end{rem}

Related to entropy we have the following questions.

\begin{prob} Let $(X,T)$ be a t.d.s. Denote by $K^{n+1}(X)=K^{n}(K(X)),\, n\ge 0$.
\begin{enumerate}
\item Is it true that $h_{\emph{top}}(K^n(X),T_K)=0, \, \forall n\ge 2$ if $(K(X),T_K)$ is pointwise recurrent?

\item Is is true that $h_{\emph{top}}(K(X),T_K)=0$ if $(X,T)$ is weakly rigid?
\end{enumerate}
\end{prob}

\subsection{Minimal distal systems}

In this subsection we devote to the problem of the equivalence between uniform rigidity and pointwise
recurrence on hyperspace. A partial positive answer is given below for a special class of minimal distal systems.

\begin{thm}\label{1-rigid}
Assume that $(X,T)$ is a skew product of a compact metric group $G$ over a minimal
equicontinuous system $(Y,S)$. Then the following statements are equivalent:
\begin{enumerate}
\item\label{1-rigid:1} $Y\times \{e\}$ is recurrent for $T_K$, where $e$ is the unit of $G$;
\item\label{1-rigid:2} $Y\times \{g\}$ is recurrent under action of $T_K$ for some $g\in G$;
\item\label{1-rigid:3} $(K(X),T_K)$ is pointwise recurrent;
\item\label{1-rigid:4} $(X,T)$ is uniformly rigid.
\end{enumerate}
\end{thm}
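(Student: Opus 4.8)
The plan is to prove the cycle $\eqref{1-rigid:4}\Rightarrow\eqref{1-rigid:3}\Rightarrow\eqref{1-rigid:1}\Rightarrow\eqref{1-rigid:4}$ together with $\eqref{1-rigid:1}\Leftrightarrow\eqref{1-rigid:2}$. Write $X=Y\times G$ with $T(y,g)=(Sy,\phi(y)g)$ for a continuous $\phi\colon Y\to G$, and put $\phi^{(n)}(y)=\phi(S^{n-1}y)\cdots\phi(y)$ for $n\ge1$, so that $T^n(y,g)=(S^ny,\phi^{(n)}(y)g)$. The implication $\eqref{1-rigid:4}\Rightarrow\eqref{1-rigid:3}$ is immediate from Theorem~\ref{uni. rigid}: uniform rigidity of $(X,T)$ forces uniform rigidity, hence pointwise recurrence, of $(K(X),T_K)$. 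The implication $\eqref{1-rigid:3}\Rightarrow\eqref{1-rigid:1}$ is trivial because $Y\times\{e\}\in K(X)$, and $\eqref{1-rigid:1}\Rightarrow\eqref{1-rigid:2}$ is trivial as well.

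Next I would record the elementary identity, valid for every $g\in G$ and every $n\in\Z$,
\[
d_H\bigl(T_K^n(Y\times\{g\}),\,Y\times\{g\}\bigr)=\sup_{y\in Y}d_G\bigl(\phi^{(n)}(y)g,\,g\bigr),
\]
which follows directly from the definition of the Hausdorff metric once one notes that $S^n$ is a bijection of $Y$, so that the $Y$-coordinates of the two sets can always be matched exactly. Since the multiplication of the compact metric group $G$ is uniformly continuous, for a sequence $n_i$ one has $\sup_y d_G(\phi^{(n_i)}(y)g,g)\to0$ if and only if $\sup_y d_G(\phi^{(n_i)}(y),e)\to0$, and the latter condition does not involve $g$; call it $(\star)$. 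Thus $Y\times\{g\}$ is recurrent for $T_K$ — for some, equivalently every, $g\in G$ — exactly when $(\star)$ holds along some $n_i\to+\infty$ or some $n_i\to-\infty$. This already gives $\eqref{1-rigid:2}\Rightarrow\eqref{1-rigid:1}$ and reduces everything to the implication $(\star)\Rightarrow(X,T)$ is uniformly rigid.

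For this last implication, which is the heart of the matter, assume $(\star)$ holds along $n_i$; the case $n_i\to-\infty$ is handled by applying the argument to $T^{-1}$, which is again a skew product of $G$ over the minimal equicontinuous system $(Y,S^{-1})$, so we may assume $n_i\to+\infty$. Since $(Y,S)$ is minimal and equicontinuous, $\{S^n:n\in\Z\}$ is equicontinuous, hence $E(Y,S)$ is a compact group of homeomorphisms of $Y$ on which the topology of pointwise convergence coincides with that of uniform convergence; passing to a subsequence we may therefore assume $S^{n_i}\to R$ uniformly for some $R\in E(Y,S)$. Together with $(\star)$ this gives $T^{n_i}\to P$ \emph{uniformly}, where $P(y,g)=(Ry,g)$; in particular $P\in E(X,T)$ is an honest continuous map, and $P^k(y,g)=(R^ky,g)$ for all $k\ge1$. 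A standard compactness argument in the compact group $E(Y,S)$ yields $k_j\to\infty$ with $R^{k_j}\to\id$ uniformly, and hence $\sup_{x\in X}d_X(P^{k_j}x,x)=\sup_{y\in Y}d_Y(R^{k_j}y,y)\to0$. Finally, using the uniform convergence $T^{n_i}\to P$ together with the uniform continuity of the continuous maps $P,P^2,\dots$, one shows by induction on $k$ that for every $k$ and every $\ep>0$ there are indices $i_1,\dots,i_k$, which can be chosen so that $N:=n_{i_1}+\cdots+n_{i_k}$ is as large as desired, with $\sup_x d_X(T^Nx,P^kx)<\ep$. Combining the last two statements: given $\ep>0$, pick $j$ with $\sup_x d_X(P^{k_j}x,x)<\ep/2$ and then $N$ arbitrarily large with $\sup_x d_X(T^Nx,P^{k_j}x)<\ep/2$, so that $\sup_x d_X(T^Nx,x)<\ep$. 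Letting $\ep\to0$ produces $N_l\to\infty$ with $T^{N_l}\to\id$ uniformly, i.e. $(X,T)$ is uniformly rigid, completing the cycle.

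I expect the main obstacle to be the final telescoping step. The product of the Ellis semigroup is continuous in only one variable, so to exhibit a fixed power $P^k$ as a \emph{uniform} limit of a single sequence of iterates $T^N$ one must use that hypothesis $(\star)$ collapses the $G$-component of the limit and makes $P$ the genuinely continuous map $(y,g)\mapsto(Ry,g)$; had the limiting $G$-cocycle been merely measurable, the induction would break down. The other ingredients — the Hausdorff-distance identity, the compactness of $E(Y,S)$ in the uniform topology, and the accumulation $R^{k_j}\to\id$ of the powers of an element of a compact metric group at the identity — are routine.
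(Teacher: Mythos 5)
Your proof is correct, but it takes a genuinely different route from the paper's for the only nontrivial implication $\eqref{1-rigid:2}\Rightarrow\eqref{1-rigid:4}$. The paper never forms a limit map: it observes that recurrence of $Y\times\{g\}$ makes $F_1=\{n: \sup_y d_G(\phi^{(n)}(y)g,g)<\delta\}$ an $IP$-set, that the return times $F_2$ of the base to an $\ep$-ball in the Halmos--von Neumann invariant metric form an $IP^*$-set, and that any single $n\in F_1\cap F_2\neq\emptyset$ already satisfies $\sup_x d(T^nx,x)<2\ep$ --- one shot, no iteration. You instead pass to the enveloping group: extract $S^{n_i}\to R$ uniformly, use $(\star)$ to get $T^{n_i}\to P$ uniformly with $P(y,g)=(Ry,g)$ continuous, recover $\id$ from the powers $R^{k_j}$ in the compact group $\ol{\{S^n\}}$, and approximate $P^k$ by $T^{n_{i_1}+\dots+n_{i_k}}$ via a telescoping induction. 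Both arguments hinge on the same structural fact --- recurrence of the fibre $Y\times\{g\}$ forces the cocycle $\phi^{(n)}$ to be \emph{uniformly} close to $e$ along a rich set of times, while the equicontinuous base returns uniformly on its own --- but the paper's combinatorial shortcut ($IP\cap IP^*\neq\emptyset$) merges the two requirements into one time instant, whereas you compose good times and must therefore justify the telescoping (uniform continuity of the fixed map $T^{N'}$ plus the uniform inductive estimate evaluated at $Px$ make it go through, and your observation that continuity of $P$ is what saves the day is exactly right). Your approach is more elementary in that it avoids $IP^*$-sets and the invariant metric, at the cost of the extra limit-map bookkeeping; two small cosmetic points are that your Hausdorff-distance identity is an exact equality only for the max product metric on $Y\times G$ (uniform equivalence is all you need), and that when you reduce negative to positive recurrence you should note, as is immediate from substituting $x=T^my$, that uniform rigidity of $T^{-1}$ gives uniform rigidity of $T$.
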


\begin{proof}
By Theorem~\ref{uni. rigid} we obtain $\eqref{1-rigid:4}\Longrightarrow\eqref{1-rigid:3}\Longrightarrow\eqref{1-rigid:1}\Longrightarrow\eqref{1-rigid:2}$, so it remains to show $\eqref{1-rigid:2}\Longrightarrow\eqref{1-rigid:4}$.

By definition  $X=Y\times G$, $T(y,g)=(Sy,\phi(y)g)$ for any $(y,g)\in Y\times G$, where  $\phi\colon Y\to G$ is continuous, $(Y,S)$ is a minimal equicontinuous t.d.s. and $G$ is a compact metric group. Note that for any $(y,g)\in Y\times G$ and $n\in\N$ we have
$$T^n(y, g)=(S^n y, \phi(S^{n-1} y)\dots \phi(y) g).$$

Fix any $\ep >0$. Since $G\times G\to G, (g_1,g_2)\mapsto g_1g_2$ is continuous, there is $\delta>0$ such that if $d((g_1,g_2),(g_1',g_2'))<\delta$ then $d(g_1g_2,g_1'g_2')<\ep$.

Since $(Y,S^{-1})$ is also minimal equicontinuous t.d.s. and inverse of $T$ is also a skew product defined by $T^{-1}(y,g)=(S^{-1}y,(\phi(S^{-1}y))^{-1}g)$,  without loss of generality we may assume that $Y\times \{g\}$ is positively recurrent.
Let $V$ be a neighborhood of $g$ in $G$ with $\diam(V g^{-1})<\delta$. It is clear $Vg^{-1}$ is a neighborhood of $e$. Observe that
$$F_1=\{n\in\N\colon T^n_K(Y\times \{g\})\in \langle Y\times V\rangle\}$$
is an $IP$-set which implies that for $n\in F_1$ we have $T^n(Y\times \{g\})\subset Y\times V$.
Since $(Y,S)$ is minimal and equicontinuous, we can find an equivalent metric $\rho$ on $Y$ such
that $\rho(Sx,Sy)=\rho(x,y)$ for every $x,y\in Y$ (by the well known Halmos-von Neumann Theorem).
Take any $y_0\in Y$ and note that
$$F_2=\{n\in\N\colon \rho(S^ny_0,y_0)<\ep/2\}$$
is an $IP^*$ set (see \cite{Fur81}).
Take any $n\in F_1\cap F_2$ and observe that
\begin{enumerate}[(a)]
\item\label{skew1:c:a} $\rho(S^n y,y)<\ep$ for each $y\in Y$, because we have $\rho(S^{n+k}y_0,S^k y_0)<\ep/2$ for each $k\in \N$ and $(Y,S)$ is minimal.
\item\label{skew1:c:b} $ \{\phi(S^{n-1} y)\dots \phi(y)g\colon y\in Y\}\subset V$.
\end{enumerate}
By \eqref{skew1:c:b} we get that
\begin{equation*}
\begin{split}
\diam \{\phi(S^{n-1}y)\dots \phi(y)\colon y\in Y\}&\le \diam (\{\phi(S^{n-1}y)\dots \phi(y)g\colon y\in Y\}g^{-1}) \\
&\le \diam (Vg^{-1})<\delta .
\end{split}
\end{equation*}

For any $(y,h)\in X$ we have $T^n(y,h)=(S^n y, \phi(S^{n-1} y)\dots \phi(y) h).$ So
$$d(T^n(y, h),(y, h))<\ep + d(\phi(S^{n-1} y)\dots \phi(y) h, h)<2\ep,$$
and this implies that $(X,T)$ is uniformly rigid.
\end{proof}

\begin{rem}
We remark here that there exists a minimal distal t.d.s. which is not equicontinuous and meets all the requirements of
Theorem \ref{1-rigid}. We forward a (slightly technical) verification of this statement to  Appendix~\ref{ex2}.
\end{rem}

\smallskip
Let $n\in \N$. We say that $(X,T)$ is a \textit{minimal distal system of class $n$} if $X$ can be realized as $n+1$ consecutive skew products, i.e. there are a compact metric Abelian group $G_0$ with minimal rotation $S=S_0$, compact metric groups $G_1,G_2, \dots, G_n$ and skew products
$$
S_i(g_0,\dots,g_i))=(S_{i-1}(g_0,\dots,g_{i-1}), \phi_i(g_0,\dots,g_{i-1}),g_i)
$$
acting on $G_0\times G_1\times \dots \times G_i$ for $i=1,\ldots,n$
such that $T=S_n$ and $X=G_0\times\dots \times G_n$.
The following result is an extension of Theorem~\ref{1-rigid}.

\begin{thm}\label{n-rigid}
Let $n\ge 2$ and assume that $(X,T)$ is a minimal distal system of class $n$. The following statements are equivalent:
\begin{enumerate}
\item\label{n-rigid:1} $(G_0\times\dots \times G_{n-1}\times \{e_n\}, \dots, G_0\times \{e_1\}\times \dots \times\{e_n\} )$ is recurrent under $T_K^{(n)}$;
\item\label{n-rigid:2} $(G_0\times\dots \times G_{n-1}\times \{g_n^1\}, \dots, G_0\times \{g_1^1\}\times \dots \times\{g_n^n\} )$ is recurrent under $T_K^{(n)}$ for some $g_j^i\in G_j,\ i=1,\dots,j$;
\item\label{n-rigid:3} $(K(X),T_K)$ is $n$-rigid;
\item\label{n-rigid:4} $(X,T)$ is uniformly rigid.
\end{enumerate}
\end{thm}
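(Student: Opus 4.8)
The plan is to follow the blueprint already established by Theorem~\ref{1-rigid} and lift it from one skew-product level to $n$ consecutive levels. By Theorem~\ref{uni. rigid} we immediately get $\eqref{n-rigid:4}\Longrightarrow\eqref{n-rigid:3}$, and $\eqref{n-rigid:3}\Longrightarrow\eqref{n-rigid:1}\Longrightarrow\eqref{n-rigid:2}$ is trivial (the tuple in \eqref{n-rigid:1} lives in $K(X)^n$, so $n$-rigidity of $K(X)$ makes it recurrent, and \eqref{n-rigid:1} is the special case $g_j^i=e_j$ of \eqref{n-rigid:2}). So the whole content is the implication $\eqref{n-rigid:2}\Longrightarrow\eqref{n-rigid:4}$, and I would prove it by induction on $n$, with Theorem~\ref{1-rigid} essentially serving as the base case $n=1$ (after trivially passing to each coordinate). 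Throughout I will use that $(G_0\times\dots\times G_{i},S_i)$ is itself a minimal distal system of class $i$ (with the same bottom group $G_0$) and that, being distal and minimal, each such factor is invertible with its inverse again a skew product of the same shape — so, as in Theorem~\ref{1-rigid}, one may reduce to the positively recurrent case at every stage.

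The induction step goes as follows. Write $X=Z\times G_n$ where $Z=G_0\times\dots\times G_{n-1}$ carries the minimal distal class-$(n-1)$ system $(Z,S_{n-1})$, and $T(z,g_n)=(S_{n-1}z,\phi_n(z)g_n)$. The $n$-tuple of sets in \eqref{n-rigid:2} has the form $(B_1\times\{g_n^1\},\ldots,B_{n-1}\times\{g_n^{n-1}\},\, B_n')$, where $B_j=G_0\times\dots\times G_{n-1-j}\times\{g^j_{n-j}\}\times\dots\times\{g^j_{n-1}\}\subset Z$ is a coset-type slice and $B_n'=G_0\times\{g_1^n\}\times\dots\times\{g_n^n\}$; projecting to $Z$ (using that $\pi_K$ for the factor map $X\to Z$ commutes with the dynamics and is continuous) one sees that the projected $(n-1)$-tuple $(B_1,\ldots,B_{n-1})$ is exactly the hypothesis \eqref{n-rigid:2} for the class-$(n-1)$ system $(Z,S_{n-1})$ — after harmlessly discarding the $B_n'$ coordinate or absorbing it into a longer tuple, since recurrence of a tuple implies recurrence of every subtuple. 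By the inductive hypothesis, $(Z,S_{n-1})$ is uniformly rigid. Now I combine this with the recurrence of the top coordinate in exactly the way the proof of Theorem~\ref{1-rigid} combines minimality+equicontinuity of the base with recurrence of $Y\times\{g\}$: fix $\ep>0$, choose $\delta$ from the continuity of multiplication in $G_n$; the recurrence of $B_n'$ (or of any one of the slices with a genuine $G_n$-fiber) gives an $IP$-set $F_1$ of return times $n$ for which the $G_n$-cocycle sum $\phi_n(S_{n-1}^{m-1}z)\cdots\phi_n(z)g_n^{?}$ stays within a $\delta$-small set uniformly in $z$, hence $\diam\{\phi_n(S_{n-1}^{m-1}z)\cdots\phi_n(z):z\in Z\}<\delta$ for $m\in F_1$; meanwhile uniform rigidity of $Z$ gives a syndetic — in particular $IP^*$, hence $F_1$-meeting — set $F_2$ of $m$ with $d(S_{n-1}^m z,z)<\ep$ for all $z\in Z$. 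Picking $m\in F_1\cap F_2$ and noting $T^m(z,h)=(S_{n-1}^m z,\ \phi_n(S_{n-1}^{m-1}z)\cdots\phi_n(z)\,h)$ yields $d(T^m(z,h),(z,h))<2\ep$ uniformly on $X$, so $(X,T)=(X,S_n)$ is uniformly rigid, i.e. \eqref{n-rigid:4}.

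The main obstacle — and the point where one must be a little careful — is the bookkeeping in the projection step: verifying that the specific product-of-slices sets appearing in \eqref{n-rigid:2} project onto sets of the same form one level down, and that the $n$-th (bottom-heavy) coordinate $B_n'$, which has a nontrivial fiber in every $G_j$, is precisely the one carrying enough information to run the cocycle argument at the top level while its projection still fits the induction hypothesis below. A clean way to organize this is to prove the slightly stronger statement that recurrence under $T_K^{(n)}$ of the canonical tuple of "descending slices" is equivalent to uniform rigidity, and to observe that the passage from the class-$n$ system to its class-$(n-1)$ factor sends the canonical descending $n$-tuple to (a tuple containing) the canonical descending $(n-1)$-tuple; then the induction is immediate and the cocycle estimate at the top is literally the computation in the proof of Theorem~\ref{1-rigid} with $(Y,S)$ replaced by $(Z,S_{n-1})$ and "minimal equicontinuous" replaced by the weaker "uniformly rigid," which is all that estimate ever used. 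One should also record that since $G_0$ is Abelian and the tower is distal, $T$ is invertible and $T^{-1}$ is again a class-$n$ distal tower, so the reduction to the positively recurrent case in \eqref{n-rigid:2} is legitimate, exactly as in Theorem~\ref{1-rigid}.
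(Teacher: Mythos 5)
Your reduction of everything to $\eqref{n-rigid:2}\Rightarrow\eqref{n-rigid:4}$ and the basic idea (one slice per cocycle level, with a full slice controlling the top cocycle uniformly) are right, but the inductive glue step has a genuine gap. After obtaining by induction that $(Z,S_{n-1})$ is uniformly rigid, you claim that the set $F_2=\{m\colon \sup_{z\in Z}d(S_{n-1}^m z,z)<\ep\}$ is ``syndetic --- in particular $IP^*$, hence $F_1$-meeting.'' Neither step holds: the set of uniform-rigidity times of a uniformly rigid system is in general only an $IP$-set (it contains $FS(\{p_i\})$ for $p_i$ chosen with $\sup_z d(S_{n-1}^{p_i}z,z)<\ep 2^{-i}$, by the triangle inequality), and it need be neither syndetic nor $IP^*$; moreover even a syndetic set need not meet a given $IP$-set (e.g.\ $FS(\{2^{2^i}\})$ consists of even numbers and misses the syndetic set of odd numbers). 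So there is no reason for $F_1\cap F_2\neq\emptyset$, and the argument breaks exactly where the two levels must be synchronized. The paper never decouples the levels: the recurrence of the whole $n$-tuple under $T_K^{(n)}$ yields a \emph{single} $IP$-set $F_1$ of common return times at which every cocycle product $\phi_i(S_{i-1}^{m-1}\,\cdot)\cdots\phi_i(\cdot)$ has small diameter simultaneously, and an $IP^*$-set is invoked only at the very bottom, for the equicontinuous rotation $(G_0,S_0)$, where Halmos--von Neumann gives an invariant metric and hence genuinely $IP^*$ return times; then $F_1\cap F_2\neq\emptyset$ because $IP$ meets $IP^*$. Your induction could be repaired by strengthening the inductive statement to say that the $\ep$-rigidity times of $Z$ contain $F_1\cap F_2$ with $F_1$ the return-time $IP$-set of the tuple and $F_2$ a fixed $IP^*$-set coming from $G_0$ --- but that is the paper's simultaneous argument in disguise.

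A secondary slip: in your bookkeeping the top-full slice $G_0\times\dots\times G_{n-1}\times\{g_n^1\}$ has disappeared (your list starts with $B_1\times\{g_n^1\}$, which is already a singleton in the $G_{n-1}$-coordinate, and ends with the bottom-heavy $B_n'$), and you then invoke the recurrence of $B_n'$ ``or of any slice with a genuine $G_n$-fiber'' to bound $\diam\{\phi_n(S_{n-1}^{m-1}z)\cdots\phi_n(z)\colon z\in Z\}$. Only the recurrence of the top-full slice $Z\times\{g_n^1\}$ gives that bound uniformly over all $z\in Z$; the recurrence of $B_n'$ controls the cocycle product only over the thin fibre $G_0\times\{g_1^n\}\times\dots\times\{g_{n-1}^n\}$, which is not enough for uniform rigidity.
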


\begin{proof}
By Theorem~\ref{uni. rigid} it is clear that $\eqref{n-rigid:4}\Longrightarrow\eqref{n-rigid:3}\Longrightarrow\eqref{n-rigid:1}\Longrightarrow\eqref{n-rigid:2}$, so it remains to show $\eqref{n-rigid:2}\Longrightarrow\eqref{n-rigid:4}$.
For simplicity we present the proof only for the case $n=2$ since the proof for general case follows the same lines, however is much more technical in detail.

Let $X=G_0\times G_1\times G_2$. By assumption there are continuous maps $\phi_1\colon G_0\to G_1$, $\phi_2\colon G_0\times G_1\to G_2$.
Let $S_0\colon G_0\to G_0$, $S_1\colon G_0\times G_1\to G_0\times G_1$. Then for $(g_0,g_1,g_2)\in X$ we have
$$T(g_0,g_1,g_2)=(S_1(g_0,g_1), \phi_2(g_0,g_1)g_2).$$ Thus
$$T^n(g_0,g_1,g_2)=(S_1^n(g_0,g_1), \phi_2(S_1^{n-1}(g_0,g_1))\dots \phi_2(S_1(g_0,g_1)) \phi_2(g_0,g_1)g_2).$$
Note that
$$S_1^k(g_0,g_1)=(S_0^k\ g_0, \phi_1(S_0^{k-1}g_0)\dots \phi_1(g_0)g_1).$$

Let $\ep >0$. We choose $\delta>0$ such that if $(h_1,h_2), (h_1',h_2')\in G_1\times G_1 (\textrm{resp. } G_2\times G_2 )$ with $d((h_1,h_2),(h_1',h_2'))<\delta$ then $d(h_1h_2,h_1'h_2')<\ep$.
Let
$p=(G_0\times \{g_1^1\}\times \{g_2^1\}, G_0\times G_1\times \{g_2^2\})$, $V_1,V_2,V_2'$
be neighborhoods of  $g_1^1,g_2^1,g_2^2$ respectively with $\diam (V_1(g_1^1)^{-1})<\delta$ and $\diam (V_2'(g_2^2)^{-1})<\delta$. Put $V_p= (\langle G_0\times V_1\times V_2\rangle, \langle G_0\times G_1\times V_2'\rangle)$. Then without loss of generality,
$$F_1=\{n\in\N\colon (T\times T)^n(p)\in V_p\}$$
is an $IP$-set. Since $(G_0,S_0)$ is a minimal group rotation then by Halmos-von Neumann Theorem it is equicontinuous.
So for some fixed $g_0'\in G_0$ we have
$$F_2=\{n\in\N\colon \rho(S^n g_0',g_0')<\ep/2\}$$ is an $IP^*$-set, where as in the proof of Theorem~\ref{1-rigid}, $\rho$ is an equivalent metric on $Y$ which does not increase distance under iteration of $S$.

For each $n\in F_1\cap F_2$ we have
\begin{enumerate}[(a)]
\item\label{skew2:c:a} $\rho(S_0^n\ g_0,g_0)<\ep$ for each $g_0\in G_0$, because we have $\rho(S_0^{n+k}g_0',S_0^k g_0')<\ep/2$ for each $k\in \N$ and $(G_0,S_0)$ is minimal;
\item\label{skew2:c:b} $ \{\phi_1(S_0^{n-1} g_0)\dots \phi_1(g_0)g_1^1\colon g_0\in G_0\}\subset V_1$;
\item\label{skew2:c:c} $ \{ \phi_2(S_1^{n-1}(g_0,g_1))\dots \phi_2(g_0,g_1)g_2^2\colon (g_0, g_1)\in G_0\times G_1\}\subset V_2'$.
\end{enumerate}
From \eqref{skew2:c:b} and \eqref{skew2:c:c} we know
$$
\diam \{\phi_1(S_0^{n-1} g_0)\dots \phi_1(g_0)\colon g_0\in G_0\} \le \diam (V_1(g_1^1)^{-1})<\delta,
$$
$$
\diam \{ \phi_2(S_1^{n-1}(g_0,g_1))\dots \phi_2(g_0,g_1)\colon (g_0, g_1)\in G_0\times G_1\} \le \diam (V_2'(g_2^2)^{-1})<\delta .
$$
It implies that
$$d(T^n(g_0, g_1, g_2),(g_0, g_1, g_2))<3\ep,$$
and then $(X,T)$ is uniformly rigid.
\end{proof}

\begin{prob}\label{pro-3}
The following questions arise naturally:
\begin{enumerate}
  \item Does the equivalence of \eqref{n-rigid:3} and \eqref{n-rigid:4} in Theorem \ref{n-rigid} still hold, when $(X,T)$ is a general minimal distal system (not necessarily group skew product)?

  \item Is there a minimal distal system $(X, T)$ of class $n$ such that $(K(X),T_K)$ is $(n-1)$-rigid but $(G_0\times\dots \times G_{n-1}\times \{e_n\},\dots, G_0\times \{e_1\}\times \dots \times\{e_n\})$ is not recurrent?

\end{enumerate}
\end{prob}

We remark that if Problem \ref{pro-3} (2) has a positive answer, then $(K(X),T_K)$ is $(n-1)$-rigid, and not $n$-rigid.
Particularly, if Problem \ref{pro-3} (2) has a positive answer for $n=2$, then Problem \ref{pro-1}
will be solved, i.e. there is a t.d.s. $(X,T)$ such that $(K(X),T_K)$ is pointwise recurrent and at the same time $(X,T)$ is not uniformly
rigid.



%


\subsection{The countable case}

In this section, we discuss pointwise recurrence induced on hyperspace when $X$ is countable.

First, we recall the notion of the derived set of $X$ of order $\alpha$. A point $x$ of $X$ is an \textit{accumulation point} of the set $X$ if $x\in \ol{X\setminus \{x\}}$. The set of accumulation points of $X$ is said to be the \textit{derived set} of $X$, denote as $X^*$. The derived set of $X$ of order $\alpha$ is defined by the conditions: $X^{(1)}=X^*, \ X^{(\alpha+1)}=(X^{(\alpha)})^*$ and $X^{(\lambda)}=\bigcap_{\alpha<\lambda} X^{(\alpha)}$ if $\lambda$ is a limit ordinal number. We put $d(X)=\alpha$ if $X^{(\alpha)}\neq\emptyset$ and $X^{(\alpha+1)}=\emptyset$.  Here $d(X)$ is called the \textit{derived degree} of $X$.

It is well known that a compact metric space $X$ is a countable set if and only if $d(X)$ exists and it is a countable ordinal number. In this case, if $d(X)=\alpha$, then $X^{(\alpha)}$ is a finite set. More details can be found in the book by Kuratowski \cite[p. 261]{Ku68}.

\begin{thm}\label{countable}
Let $(X,T)$ be an invertible t.d.s. acting on a countable space $X$. The following statements are equivalent:
\begin{enumerate}
\item\label{countable:1} $(K(X),T_K)$ is pointwise recurrent;
\item\label{countable:2} $(K(X),T_K)$ is uniformly rigid;
\item\label{countable:3} $(K(X),T_K)$ is rigid;
\item\label{countable:4} $(K(X),T_K)$ is weakly rigid;
\item\label{countable:5} $(X,T)$ is uniformly rigid;
\item\label{countable:6} $(X,T)$ is pointwise periodic.
\end{enumerate}
\end{thm}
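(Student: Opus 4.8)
The plan is to establish the cycle of implications so that all six conditions are linked, using the structure theory of countable compact spaces (the derived set hierarchy) to handle the genuinely new content, which is the equivalence of pointwise periodicity with the other five properties. By Theorem~\ref{uni. rigid} we already have $\eqref{countable:2}\Longleftrightarrow\eqref{countable:3}\Longleftrightarrow\eqref{countable:4}\Longleftrightarrow\eqref{countable:5}$, and the implication $\eqref{countable:2}\Longrightarrow\eqref{countable:1}$ is immediate since uniform rigidity of $(K(X),T_K)$ forces every point of $K(X)$ to be recurrent. The implication $\eqref{countable:6}\Longrightarrow\eqref{countable:5}$ is the easy direction of the new part: if $(X,T)$ is pointwise periodic on a compact space, then one shows $T$ is in fact uniformly periodic-like enough to give uniform rigidity --- more precisely, for each $\ep>0$ compactness lets us cover $X$ by finitely many sets $U_1,\dots,U_k$ of small diameter, and pointwise periodicity together with equicontinuity-type arguments on each $U_i$ (or directly, using that the orbit of each point is finite and the space is countable so a Baire-category/compactness argument applies) produces a single $n$ with $d(T^n x,x)<\ep$ for all $x$. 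So the crux is the implication $\eqref{countable:1}\Longrightarrow\eqref{countable:6}$.

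For $\eqref{countable:1}\Longrightarrow\eqref{countable:6}$, I would argue by transfinite induction on the derived degree $d(X)=\alpha$, using that $X^{(\alpha)}$ is finite and $T$-invariant (each derived set $X^{(\beta)}$ is $T$-invariant because $T$ is a homeomorphism). Since $X^{(\alpha)}$ is a finite invariant set, every point in it is periodic, so the claim holds on $X^{(\alpha)}$; this is the base case. For the inductive step, suppose every point of $X^{(\beta)}$ is periodic and consider a point $x\in X^{(\beta-1)}\setminus X^{(\beta)}$ (successor case) or more generally a point that becomes isolated at level $\beta-1$. The key observation is that such an $x$ is isolated in the closed invariant set $X\setminus(\text{interior relative to }X^{(\beta-1)})$, and one can find a small clopen-in-$X^{(\beta-1)}$ neighborhood; but the real leverage comes from feeding a suitable finite set into $K(X)$: by recurrence of $(K(X),T_K)$ applied to a cleverly chosen compact set $A$ (for instance a finite set consisting of $x$ together with representatives from nearby points), one forces $T^nx$ to return close to $x$ along a sequence, and then combines this with the finiteness of $X^{(\alpha)}$ to upgrade "returns close" to "returns exactly." The invertibility of $T$ is essential here (Remark~\ref{rem}\eqref{rem:2}) so that we may use two-sided recurrence.

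The main obstacle --- and the step I expect to require the most care --- is converting the topological recurrence supplied by $(K(X),T_K)$ being pointwise recurrent into genuine periodicity of individual points of $X$. The subtlety is that $T^nx\to x$ along a subsequence does not by itself give $T^nx=x$; one must exploit the rigidity of the countable structure. The idea is: if $x\in X^{(\beta-1)}$ is not periodic, its orbit is infinite, hence has an accumulation point $y\in X^*$, and since $d(X)=\alpha$ is finite, iterating the derived-set operation shows the orbit closure has strictly smaller "complexity" than expected, eventually forcing a contradiction with $x\notin X^{(\beta)}$ --- OR, the orbit closure being an infinite compact countable set admits a minimal subsystem, and by Theorem~\ref{countable}'s earlier-established equivalences (or by a direct argument using that a minimal system on a countable compact space is finite, hence periodic), one derives that $x$ itself must be periodic. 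I would organize this last part so that the contradiction is extracted cleanly: assume $x$ has infinite orbit, use pointwise recurrence on $K(X)$ applied to the closed set $\ol{\mathrm{orb}(x,T)}$ itself to get $T_K^{n_i}\ol{\mathrm{orb}(x,T)}\to\ol{\mathrm{orb}(x,T)}$, deduce that $\ol{\mathrm{orb}(x,T)}$ is a minimal set (orbit closures of recurrent points are transitive, and a transitive countable compact system with every point recurrent is minimal), and finally invoke that a minimal countable compact system is finite (its unique derived degree must be $0$), hence $x$ is periodic --- contradicting infinitude of the orbit. This closes the cycle.
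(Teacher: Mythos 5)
Your reduction to Theorem~\ref{uni. rigid} for the equivalence of \eqref{countable:2}--\eqref{countable:5}, and your final ``clean'' version of $\eqref{countable:1}\Longrightarrow\eqref{countable:6}$, match the paper: the orbit closure of a recurrent point in a countable compact space cannot be perfect, so it has an isolated point, which (being transitive and recurrent) must be exactly periodic, forcing the whole orbit closure to be a single periodic orbit. (The preceding two paragraphs of transfinite induction you sketch for this direction are unnecessary; the isolated-point argument alone suffices, and you do not need to feed $\ol{\mathrm{orb}(x,T)}$ into $K(X)$ --- pointwise recurrence of $K(X)$ restricted to singletons already makes every point of $X$ recurrent.)

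The genuine gap is in $\eqref{countable:6}\Longrightarrow\eqref{countable:5}$, which you dismiss as ``the easy direction.'' It is in fact the hard core of the theorem, occupying the bulk of the paper's proof plus all of Appendix~B. Pointwise periodicity on a compact space does not yield uniform rigidity by a cover-and-common-period argument: the periods are in general unbounded, and after fixing $\ep$ and choosing a common period $r$ for the finitely many points outside $B_\ep(A)$, the map $T^r$ may still move a point $x\in B_\ep(a_j)$ far away, because the (finite) orbit of $x$ need not stay inside $B_\ep(a_j)$ --- it can make a long excursion before returning. There is no equicontinuity available, and Baire category only gives you that some set $\{x:T^{n}x=x\}$ has interior, which does not control these excursions. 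The paper handles this with a transfinite induction on the Cantor--Bendixson rank $d(X)$: it splits $X$ into the points outside $B_\ep(A)$, the points whose orbits escape their ball $B_\ep(a_j)$ (the set $X_2$), and the points whose orbits stay put, and the key Claim~A shows that after replacing $T$ by a suitable power the escaping set becomes empty (in the rank-one case because infinitely many disjoint escaping orbits would all have to meet the finite set $X\setminus B_\ep(A)$; in higher ranks by a delicate induction through the derived sets $X^{(\beta)}$). None of this machinery, nor any substitute for it, appears in your proposal, so the implication chain is not closed.
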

\begin{proof}
We have already proved the equivalence of \eqref{countable:2}--\eqref{countable:5}
in Theorem~\ref{uni. rigid}, and clearly \eqref{countable:5} implies \eqref{countable:1}.
To see that \eqref{countable:1} implies \eqref{countable:6} take any  $x\in X$ and put
$A=\ol{\textrm{orb}(x,T)}$ or $A=\ol{\textrm{orb}(x,T^{-1})}$ if $x$ is positively or
negatively recurrent, respectively. Since $A$ is compact and countable, it is not a
perfect set and hence has at least one isolated point. This shows that $x$ must be a periodic point.

It remains to show $\eqref{countable:6}\Longrightarrow\eqref{countable:5}$. If $X$ is finite,
\eqref{countable:5} is obvious. When $X$ is infinite, there is a countable ordinal number $\alpha$ such that $d(X)=\alpha$.

We denote $A=X^{(\alpha)}=\{a_1,a_2,\ldots,a_m\}$ and may further assume that each $a_i \in A$ is a fixed point,
since in the proof of uniform rigidity we can always replace $T$ by its higher iterate. We can also choose
a fixed (arbitrarily small) $\ep>0$ such that $B_\ep(A)=\ol{B_\ep(A)}$, $B_\ep(a_j)=\ol{B_\ep(a_j)}$
for each $a_j \in A$ and $TB_{\ep}(a_i)\cap B_{\ep}(a_j)=\emptyset$ when $i \neq j$. Now we divide $X$ into three subsets:
\begin{enumerate}[(I)]
	\item $X_1=\{x\in X\colon x\notin B_\ep(A)\}$,
	\item $X_2 =\bigcup_{j=1}^{m}X_{2,j}, \ \mbox{where} \ \ X_{2,j}=\{x\in X: x \in B_{\ep}(a_j) \ \mbox{and}\ \textrm{orb}(x,T) \not\subset B_{\ep}(a_j)\}$,
	\item $X_3 =\bigcup_{j=1}^{m}X_{3,j}, \ \mbox{where} \ \ X_{3,j}=\{x\in X: x \in B_{\ep}(a_j) \ \mbox{and}\ \textrm{orb}(x,T) \subset B_{\ep}(a_j)\}.$
\end{enumerate}

Note that $X_1$ is closed and $X_1 \cup X_2 $ is invariant under $T$.
We claim that

\medskip
\noindent {\bf Claim A: } There exists $n\in\N$ such that $\widetilde{X}_2 =\emptyset$, where $\widetilde{T}=T^n$ and
\begin{enumerate}[(I\')]
	\item $\widetilde{X}_1=\{x\in X\colon x\notin B_\ep(A)\}$,
	\item $\widetilde{X}_2 =\bigcup_{j=1}^{m}\widetilde{X}_{2,j}, \ \mbox{where} \ \ \widetilde{X}_{2,j}=\{x\in X: x \in B_{\ep}(a_j) \ \mbox{and}\ \textrm{orb}(x,\widetilde{T}) \not\subset B_{\ep}(a_j)\}$,
	\item $\widetilde{X}_3 =\bigcup_{j=1}^{m}\widetilde{X}_{3,j}, \ \mbox{where} \ \ \widetilde{X}_{3,j}=\{x\in X: x \in B_{\ep}(a_j) \ \mbox{and}\ \textrm{orb}(x,\widetilde{T}) \subset B_{\ep}(a_j)\}.$
\end{enumerate}
( Please see Appendix B for the proof of the Claim A.)

\medskip
By {\bf Claim A} it is clear that $\widetilde{X}_1=X\setminus B_{\ep}(A)$ is closed and invariant under $\widetilde{T}$ and $\widetilde{X}_{3,t}=B_{\ep}(a_t)$ for $t=1,2,\dots,m$.
Therefore $(\widetilde{X}_1, \widetilde{T}|_{\widetilde{X}_1})$ is a subsystem and $d(\widetilde{X}_1)<\alpha$.

Now we shall prove $\eqref{countable:5}$ by induction on $\alpha$. First assume that $\alpha=1$. By {\bf Claim A} we can find $n\in \N$ such that $\widetilde{X}_2 =\emptyset$,
$(\widetilde{X}_1, \widetilde{T}|_{\widetilde{X}_1})$ is a subsystem and $\widetilde{X}_1$ is finite.
Let $r$ be the common period of each point in $\widetilde{X}_1$ under $\widetilde{T}$. Then we have $d(\widetilde{T}^rx,x)=0<2\ep$ for $x \in \widetilde{X}_1$.
And for any $t\in\{1,2,\dots,m\}$, it is clear that $d(x,\widetilde{T}^r x)<2\ep$  for each $x \in \widetilde{X}_{3,t}=B_{\ep}(a_t)$.
Hence $d(x,\widetilde{T}^r x)<2\ep$ for each $x\in X$. That is , $\widetilde{T}$ is uniformly rigid and hence so is $T$, proving $\eqref{countable:5}$ for the case of $\alpha=1$.

Next we assume that $\eqref{countable:5}$ holds for all cases with $d(X)<\alpha$, and the goal is to prove
$\eqref{countable:5}$ is still true for $d(X)=\alpha$.
By {\bf Claim A} we can find $n\in \N$ such that $\widetilde{X}_2 =\emptyset$,
$(\widetilde{X}_1, \widetilde{T}|_{\widetilde{X}_1})$ is a subsystem and $d(\widetilde{X}_1)<\alpha$.
Thus $(\widetilde{X}_1, \widetilde{T}|_{\widetilde{X}_1})$ is uniformly rigid by the inductive assumption.
Moreover, note that for any $t\in\{1,2,\dots,m\}$ we have $d(x,\widetilde{T}^r x)<2\ep$
for each $x \in \widetilde{X}_{3,t}=B_{\ep}(a_t)$ and $r\in\N$. So $\widetilde{T}$ is uniformly rigid,
and then so is $T$. That is $\eqref{countable:5}$ holds for $d(X)=\alpha$. This ends the proof.
\end{proof}

\section{Dense recurrent points in the hyperspace}\label{weak}
In this section we discuss the situation when $(K(X),T_K)$ has a dense set of recurrent points.


\begin{thm}\label{dense}
Let $(X,T)$ be an invertible t.d.s. We have
\begin{enumerate}
\item If $(K(X),T_K)$ has a dense set of recurrent points then so does $(X,T)$;

\item If $(X,T)$ is transitive then $(K(X),T_K)$ has a dense set of recurrent points.

\end{enumerate}
\end{thm}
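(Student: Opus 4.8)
For part (1), the plan is to use the general fact already invoked in the proof of Proposition~\ref{fact}: if $\pi\colon Y\to Z$ is a factor map and $z\in Z$ is recurrent, then there is a recurrent point $y\in Y$ with $\pi(y)=z$. The natural projections to use here are the factor maps $\pi_n\colon (X^n,T^{(n)})\to (K_n(X),T_K)$ sending $(x_1,\dots,x_n)$ to $\{x_1,\dots,x_n\}$. First I would observe that since $\bigcup_{n\ge 1}K_n(X)$ is dense in $K(X)$ and recurrent points are dense in $K(X)$, the recurrent points of $K_n(X)$ are dense in $K_n(X)$ for a cofinal set of $n$ (more carefully: given any nonempty open $U\subset X$, the set $\langle U\rangle$ meets the recurrent points, and singletons $\{x\}\in\langle U\rangle$ can be approximated — but a cleaner route is to take $A\in\langle U\rangle$ recurrent, approximate $A$ by a finite set $\{x_1,\dots,x_n\}\subset U$, and then perturb). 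The key step is that a recurrent point $\{x_1,\dots,x_n\}\in K_n(X)$ lifts, via the permutation argument already used in Proposition~\ref{fact}\eqref{fact:2}, to a recurrent point $(x_1,\dots,x_n)\in X^n$; projecting to the first coordinate and using that $x_1$ can be taken in $U$, we get a recurrent point of $(X,T)$ in $U$. Since $U$ was an arbitrary nonempty open set, recurrent points are dense in $X$.

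For part (2), suppose $(X,T)$ is transitive; I want to show recurrent points of $(K(X),T_K)$ are dense. Fix a basic open set $\langle U_1,\dots,U_n\rangle$ in $K(X)$. Since $(X,T)$ is transitive and invertible, transitive points are dense and every transitive point is recurrent; moreover the orbit closure of a transitive point is all of $X$, so I would like to build a single closed set that visits each $U_i$, is contained in the orbit closure structure appropriately, and is recurrent for $T_K$. The natural candidate is an orbit closure itself: pick a transitive (hence recurrent) point $x\in X$ and set $A=\ol{\mathrm{orb}(x,T)}\cup\ol{\mathrm{orb}(x,T^{-1})}=X$ — but that is the whole space, a single point of $K(X)$, and it need not lie in our prescribed basic open set. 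So instead the plan is: since $x$ is transitive, there are times so that $T^{k_i}x\in U_i$; take $A$ to be the closure of a suitable piece of the orbit of $x$, specifically $A=\ol{\{T^m x : m\ge N\}}$ for large $N$ so that $A$ still meets each $U_i$ (possible because the forward orbit of a transitive point is dense) and $A\subset\bigcup U_i$ fails in general — here is the genuine difficulty.

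The honest approach to (2) is therefore: since $X$ itself is a transitive system and transitive systems have a dense set of recurrent points (the transitive points), and since by Lemma~\ref{P-system} transitivity of $(K(X),T_K)$ is equivalent to weak mixing of $(X,T)$, one cannot expect $K(X)$ transitive; but one can still exhibit many recurrent points. The cleanest argument: given $\langle U_1,\dots,U_n\rangle$, choose by transitivity a point $x$ whose forward orbit meets each $U_i$, say $T^{m_i}x\in U_i$; then consider $B=\{T^{m_1}x,\dots,T^{m_n}x\}\in\langle U_1,\dots,U_n\rangle$, and argue $B$ is recurrent in $(K(X),T_K)$ because its coordinates share the recurrence of $x$ — i.e. lift from $K_n(X)$ via $\pi_n$ to $X^n$, where $(T^{m_1}x,\dots,T^{m_n}x)$ is recurrent since $x$ is recurrent in $(X,T)$ and hence its $n$-tuple is recurrent in $(X^n,T^{(n)})$ (a transitive point of $X$ need not give a recurrent $n$-tuple in general, so one instead picks $x$ to be recurrent via a single IP-set of return times along which all $n$ translates return simultaneously — this is where Lemma~1.? on IP-sets and the fact that the orbit closure of a recurrent point is transitive gets used). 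So the main obstacle is controlling the $n$-fold recurrence simultaneously; I expect to resolve it by choosing $x$ minimal (which exists densely? no — but a transitive recurrent point with IP-return-time set suffices, since an IP-set of times $Q$ with $T^q x\to x$ along $Q$ forces $T^q(T^{m_i}x)\to T^{m_i}x$ simultaneously for all $i$, making the $n$-tuple, hence $B$, recurrent). Density of such $B$ over all basic open sets then gives the claim.
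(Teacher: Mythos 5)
Your part (2), after some detours, lands exactly on the paper's argument: pick a transitive point $x$ (which is recurrent, since in a transitive system $\mathrm{Trans}(T)\subset\mathrm{Rec}(T)$), put $A=\{T^{k_1}x,\dots,T^{k_n}x\}\in\langle V_1,\dots,V_n\rangle$, and note that any sequence $m_j$ with $T^{m_j}x\to x$ gives $T^{m_j}(T^{k_i}x)=T^{k_i}(T^{m_j}x)\to T^{k_i}x$ for all $i$ simultaneously, hence $T_K^{m_j}A\to A$. That is correct, and you do not even need the IP structure of the return times --- a single return sequence suffices.

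Part (1) has a genuine gap. Your plan is to locate a \emph{finite} recurrent set in $\langle U\rangle$ and then lift it through the factor map $(X^n,T^{(n)})\to (K_n(X),T_K)$. But density of the recurrent points of $K(X)$ together with density of $\bigcup_n K_n(X)$ does not place any recurrent point inside any $K_n(X)$: the recurrent elements of $\langle U\rangle$ could all be infinite closed sets (two dense sets can have empty intersection). Your proposed repair --- approximate a recurrent $A$ by a finite subset and ``perturb'' --- does not produce a recurrent finite set, since recurrence is not stable under Hausdorff-metric perturbation. And even granting a recurrent $A\in\langle U\rangle$, it is not formal that $A$, or $\ol{U}$, contains a recurrent point of $(X,T)$; that is precisely the nontrivial step. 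The paper handles it by choosing $V$ with $\ol{V}\subset U$, taking a recurrent $A\in\langle V\rangle$, extracting an IP-set $Q$ with $T^lA\subset V$ for all $l\in Q$, and then invoking \cite[Lemma 2.3]{DSY12}, which converts such an IP-family of collective returns of a closed set into a neighborhood of itself into an actual recurrent point of $(X,T)$ inside $\ol{V}\subset U$. Some substitute for that lemma is unavoidable here, and it is the missing ingredient in your write-up.
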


\begin{proof}
Assume that $(K(X),T_K)$ has dense recurrent points. Let $U,V\subset X$ be two non-empty open subsets of $X$ with $\ol{V}\subset U$. Then $\langle V \rangle$ is non-empty open in $K(X)$. By assumption, without loss of generality, there exists a positively recurrent point $A\in\langle V \rangle$ and an $IP$-subset $Q$ of $\N$ such that $T_K^l A\in \langle V\rangle$, i.e, $T^l A\subset V$, for all $l\in Q$. By {\cite[Lemma 2.3]{DSY12}}, $\ol{V}\cap \textrm{Rec}(T)\neq\emptyset$, so $U\cap \textrm{Rec}(T) \neq\emptyset$ and (1) holds.

Now assume that $(X,T)$ is transitive and fix any non-empty open sets $V_1,\ldots,V_n\subset X$. Let $x\in X$ be a point with dense orbit and let $k_1,\ldots,k_n\in \N$
be such that $T^{k_i}x\in V_i$. If we put $A=\{T^{k_i}x : i=1,\ldots,n\}$ then $A\in \langle V_1,\ldots, V_n\rangle$ and since $x$ is positively recurrent, then
$$
\liminf_{j\to \infty} d_H(A,T_K^j A)=0
$$
completing the proof.
\end{proof}

From the above proof, we obtain that the self-product of a transitive t.d.s. $(X,T)$ has dense recurrent points. But we note that there are tansitive t.d.s. $(X,T)$ and $(Y,S)$ such that $X\times Y$ does not have a dense set of recurrent points (for example, see \cite[theorem 6.4]{DSY12}). These dynamical systems can be used to produce the following example.

\begin{exam}
If we denote disjoint union $Z=X\cup Y$ and define t.d.s. $(Z,F)$ by $F|_X=T$, $F|_Y=S$ then clearly $(Z,F)$ has dense positively recurrent points, but there
are open sets $U\subset X$, $V\subset Y$ such that $(F^{(2)})^j (U\times V)\cap U\times V=\emptyset$. Then $\langle U,V\rangle$ does not contain recurrent points of $F_K$,
in particular $(K(Z),F_K)$ does not have dense recurrent points.
\end{exam}

\subsection{Dense distal closed subsets}
It is easy to check that if $(X,T)$ has dense distal points, then so is $(K(X),T_K)$. In this subsection
we shall show the converse is not true, which answers a question left open in \cite{LYY13}.

Before we proceed with the example, let us first recall some basic notations on symbolic dynamics. Let $\Sigma_2=\{0,1\}^\N$ equipped
with the product topology. Then $\Sigma_2$ is compact, and the \textit{shift map} $\sigma\colon \Sigma_2\to \Sigma_2$ defined by $\sigma(x)_n=x_{n+1}$ for $n\in\N$ is continuous.
Any non-empty, closed and $\sigma$-invariant subset $X\subset \Sigma_2$ is called a \emph{subshift} and is identified with the subsystem $(X,\sigma)$.

Fix $n\in\N$, we call $w\in\{0,1\}^n$ a \textit{word of length $n$} and write $|w|=n$ and we denote
$|w|_a=\#\{i\in\N\colon w_i=a\}$ the number of occurrences of symbol $a$ in the word $w$. For any
two words $u=u_1u_2\dots u_n$ and $v=v_1v_2\dots v_m$, the \textit{concatenation} of $u,v$ is
defined by $uv=u_1u_2\dots u_nv_1v_2\dots v_m$. Analogously  $u^m$
is defined by the concatenation of $m$ copies of $u$ for some $m\in\N$, and $u^\infty$ the
infinite concatenation of $u$. Let $X$ be a subshift of $\Sigma_2$ and $x=x_1x_2\dots\in X$.
We say that a word $w=w_1w_2\dots w_n$ appears in $x$ at position $t$ if $x_{t+j-1}=w_j$ for $j=1,2,\dots, n$.
By $L(X)$  we denote the \textit{language} of subshift $X$, that is the set consisting of all words
that can appear in some $x\in X$, and we write $L_n(X)$ as the set of all words of length $n$
in $L(X)$. For any word $u\in L_n(X)$ its \emph{cylinder set} is defined by $[u]=\{x\in X \colon x_1x_2\dots x_n=u\}$.
Note that all cylinder sets $\{[u]\colon u\in L(X)\}$ form a basis of the topology of $X$.

The basis for our construction will be a sequence of weakly mixing minimal subshifts provided by the following lemma.
\begin{lem}\label{lem:specWMsubshift}
Fix an $\ep>0$ and a non-empty word $w$ with at least one occurrence of $1$.
Then there exists a subshift $X=X(w,\ep)\subset \Sigma_2$ such that (for some integer $n>0$) the following conditions are satisfied:
\begin{enumerate}[(i)]

\item\label{mwm:c1} $X$ is minimal and $\# X=\infty$;

\item\label{mwm:c2} there are weakly mixing minimal systems $(D_0, \sigma^n),\dots, (D_{n-1}, \sigma^n)$ (not necessarily distinct) such that $X=\bigcup_{i=0}^{n-1}D_i$ and $\sigma(D_i)=D_{i+1 (\emph{mod } n)}$;

\item\label{mwm:c3} $D_0\subset [w]$;

\item\label{mwm:c4} for some integer $k>0$ we have $k,k+1\in N_\sigma([w],[w])$ in $X$, i.e. there are words $u,v$
	such that $|u|=|v|+1$ and $wuw,wvw\in L(X)$;

\item\label{mwm:c5} there is $N\leq n$ such that $\frac{|v|_1}{m}<\ep$ for every $v\in L_m(X)$ and $m\geq N$;

\item\label{mwm:c6} for every $v\in L(X)$ with $|v|\geq |w|$ we have $\frac{|v|_1}{|v|}\leq \frac{|w|_1}{|w|}$.

 \end{enumerate}
\end{lem}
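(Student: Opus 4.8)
The plan is to build $X$ by ``blowing up'' a fixed infinite minimal weakly mixing subshift along a long periodic scaffolding that carries the word $w$. It is classical that infinite minimal weakly mixing subshifts exist (for instance the Chacon subshift; alternatively such a $Y$ can itself be produced by a block construction using the $k,k+1$-criterion of Lemma~\ref{w.m.}(1)); fix one, $(Y,\sigma)$, over the two-letter alphabet $\{0,1\}$. Put $\theta=|w|_1/|w|$, fix a sufficiently large integer $D\geq 2|w|$ (of order $|w|_1/\ep$), and then a much larger integer $n$ so that $D+1\leq n-2|w|-D$. For $b\in\{0,1\}$ set
$$
B_b=w\,0^{D+b}\,w\,0^{n-2|w|-D-b}\in\{0,1\}^{n},
$$
define $\Phi\colon Y\to\{0,1\}^{\N}$ by $\Phi(y)=B_{y_1}B_{y_2}B_{y_3}\cdots$, and finally put $D_i=\sigma^{i}(\Phi(Y))$ for $0\leq i<n$ and $X=\bigcup_{i=0}^{n-1}D_i$. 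Thus every point of $X$ is, up to a shift by less than $n$, a concatenation of the two blocks $B_0,B_1$, and the only $1$'s occurring in $X$ lie inside the ``designed'' copies of $w$ at the two prescribed positions of each block.

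First I would dispatch the structural conditions. Because $\sigma^{n}\Phi(y)=\Phi(\sigma y)$, the set $X$ is closed, $\sigma$-invariant and satisfies $\sigma(X)=X$ and $\sigma(D_i)=D_{i+1}$ (indices mod $n$); and $\Phi$ is a homeomorphism of $Y$ onto $D_0$ intertwining $\sigma$ with $\sigma^{n}$ --- it is injective since the two copies of $w$ in a block carry all of that block's $1$'s and are separated by a gap $\geq D>|w|$, so the parameter (here $b$) is read off from the $1$-pattern of the block. Hence $(D_0,\sigma^{n})\cong(Y,\sigma)$ and each $(D_i,\sigma^{n})$ is a factor of it, giving (ii); $X$ is minimal and infinite because $Y$ is, giving (i); (iii) holds because every point of $D_0=\Phi(Y)$ begins with a block $B_{y_1}$, hence with $w$; and (iv) holds with $v=0^{D}$, $u=0^{D+1}$ and $k=|w|+D$, since $B_0$ has the prefix $w\,0^{D}\,w$ and $B_1$ the prefix $w\,0^{D+1}\,w$, so $wvw,wuw\in L(X)$.

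The remaining conditions (v) and (vi) are quantitative, and are controlled by the separation $D$. The key observations are: in any point of $X$ consecutive designed copies of $w$ are at distance at least $D$ (the gap inside a block is $D$ or $D+1$, and by the choice of $n$ the gap between blocks is $\geq D$ as well); and these are the \emph{only} occurrences of $w$ in $X$ --- a copy of $w$ straddling a designed copy and an adjacent run of $0$'s would, by a short argument about the position of the extreme $1$ of $w$, force $w$ to be a power of $0$. Hence a window $v\in L_m(X)$ meets at most $m/D+2$ designed copies, so $|v|_1\leq(m/D+2)\,|w|_1$; taking both $N$ and $D$ of order $|w|_1/\ep$ (which is compatible with $N\leq n$ since $n$ is huge) then makes $|v|_1/m<\ep$ for all $m\geq N$, which is (v). For (vi), if $v\in L_m(X)$ with $m\geq|w|$ meets $j$ designed copies, then for $j\leq1$ we have $|v|_1\leq|w|_1$ and $m\geq|w|$, so $|v|_1/m\leq\theta$; and for $j\geq2$ we have $|v|_1\leq j|w|_1$ while $m\geq(j-1)D+(j-2)|w|+2$, and $|v|_1/m\leq\theta$ reduces to $(j-1)D\geq 2|w|-2$, which holds since $D\geq 2|w|$.

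I expect the only genuinely delicate point to be (vi): one must guarantee that \emph{no} window of length $\geq|w|$ is denser in $1$'s than $w$ itself --- in particular short windows overlapping the boundary of a designed copy of $w$. This is exactly what dictates the two features of the construction: encoding a block's data in the \emph{position} of a second copy of $w$ (so that no stray $1$'s, which could make a denser short window, are introduced) and keeping consecutive copies of $w$ at distance $\geq 2|w|$. It is also what forbids the trivial choice $n=1$: since $D_0\subseteq[w]$ is only $\sigma^{n}$-invariant, $w$ must occur at a fixed residue modulo $n$, and an infinite minimal subshift with that much rigidity cannot have $\sigma$ weakly mixing --- which is precisely why the period-$n$ scaffolding and the fibre system $(D_0,\sigma^{n})$ appear in condition (ii). Everything else is routine bookkeeping with the parameters, fixed in the order $(Y,\sigma)$, then $D$, then $N$, then $n$.
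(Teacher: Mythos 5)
Your proof is correct, but it takes a genuinely different route from the paper's. The paper builds $X(w,\ep)$ from scratch by a single recursive block construction: starting from $u_0=w0^tw0^{t+1}w0^t$ and $v_0=w0^{3t+2s+1}$ of common length $n$, it iterates $u_{k+1}=u_kv_ku_ku_k$, $v_{k+1}=u_kv_kv_ku_k$ (so $|u_k|-|v_k|=n$ for all $k$), takes $z=\lim u_k$ and $X=\ol{\mathrm{orb}(z,\sigma)}$; minimality and the weak mixing of each $(D_i,\sigma^n)$ then both fall out of the recursion (the latter via the $k,k{+}1$ return-time criterion of Lemma~\ref{w.m.}(1) applied to the cylinders $[u_k]$), and the density bounds (v)--(vi) come from the prescribed runs of $0$'s, exactly as in your argument. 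You instead import an infinite minimal weakly mixing subshift $(Y,\sigma)$ as a black box and inflate it by the two-block substitution $b\mapsto B_b=w0^{D+b}w0^{n-2|w|-D-b}$, so that $(D_0,\sigma^n)$ is literally conjugate to $(Y,\sigma)$ and each $(D_i,\sigma^n)$ is a factor of it; the quantitative conditions then reduce to the same kind of gap-counting. Your approach is more modular and conceptually cleaner (weak mixing is inherited rather than re-proved, and condition (iv) is read off from $B_0,B_1$ both occurring, which follows since an infinite minimal binary subshift must use both symbols), at the cost of invoking an external example where the paper is self-contained. Two minor remarks: the injectivity of $\Phi$ does hold for the reason you give (the position of the second designed copy of $w$, which carries all the block's $1$'s, encodes $b$); and your auxiliary claim that the designed copies are the \emph{only} occurrences of $w$ in $X$ is true but is never actually needed --- conditions (v) and (vi) only use that every symbol $1$ in $X$ lies inside a designed copy, which is immediate from the construction.
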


\begin{proof}
We will present a construction of subshift $X=X(w,\ep)$. Let $s=|w|$ and let $t>3s$ be such that
$2s/(s+t)<\ep$. We also assume that $2s/t<|w|_1/|w|$. Put $u_0=w 0^t w 0^{t+1}w 0^t$ and
$v_0=w 0^{3t+2s+1}$ and note that $|u_0|=|v_0|$. Denote $n=|u_0|$. Set $u_1=u_0 v_0 u_0$
and $v_1=u_0 v_0$. Then we recursively define
$$u_{k+1}=u_k v_k u_k u_k\ \text{and}\ v_{k+1}=u_k v_k v_k u_k$$
for all $k\geq 1$. Observe that $|u_1|-|v_1|=n$
and therefore  $|u_{k+1}|-|v_{k+1}|=|u_{k}|-|v_{k}|=n$ for every $k\geq 1$.
Moreover, $n||u_k|$ and $n||v_k|$ for each $k\ge 1$.

Let $z=\lim_{k\to +\infty} u_k$ and $X=\ol{\textrm{orb}(z, \sigma)}$ , i.e.
each $u_k$ is a prefix of $z$. Note that each $u_{k+1}$ and $v_{k+1}$ is a
concatenation of $u_k$ and $v_k$ and both words appear at least once. Therefore,
since $z$ can be presented as an infinite concatenation of $u_{k+1}$ and $v_{k+1}$,
we immediately obtain that $N_\sigma(z,[u_k])$ is positively syndetic for every $k$.
This shows that $z$ is a minimal point, and hence $X$ is minimal.

Put $D_0=\ol{\textrm{orb}(z, \sigma^n)}$ and $D_i=\sigma^i(D_0)$ for $i=1,\dots,n-1$.
It is clear that each $(D_i,\sigma^n)$ is minimal and $\sigma$ permutes sets $D_0,\dots, D_{n-1}$
periodically. Note that $z$ is an infinite concatenation of $u_0$ and $v_0$. Furthermore,
both $u_0,v_0$ have length $n$ and both have word $w$ as a prefix. Therefore $\sigma^{ni}(z)\in [w]$
for every $i\geq 0$, in particular $D_0\subset [w]$ and so \eqref{mwm:c3} is satisfied.
Furthermore $u_k$ is both prefix and suffix of $u_{k+1}=u_k v_k u_k u_k$ and $v_{k+1}=u_k v_k v_k u_k$. This shows that
for some $m$ we have $m,m+1\in N_{\sigma^n}([u_k],[u_k])$ since $|u_{k}|-|v_{k}|=n$. As each $D_i$ is minimal,
by Lemma \ref{w.m.}(1) we have $(D_i,\sigma^n)$ is weakly mixing for all $i$.
This proves \eqref{mwm:c2}. By the definition of $u_0$ we also have $t+s,t+s+1\in N_\sigma([w],[w])$ which gives \eqref{mwm:c4}.
Additionally observe that $D_0$ has at least two points (starting with $u_0$ and $v_0$)
hence by weak mixing $(D_0,\sigma^n)$ is infinite, in particular $X$ is infinite showing \eqref{mwm:c1}.

Finally, put $N=s+t$ and fix any $v\in L_m(X)$ for some $m\geq N$. We can write $m=jN+r$ with $r<N$.
Clearly, $v$ must appear at some position in $z$. By the definition of $u_0,v_0$ every word of $N$
consecutive symbols in $z$ can have at most $|w|_1\leq s$ occurrences of symbol $1$. Then
$$\frac{|v|_1}{|v|}\leq \frac{(j+1)|w|_1}{j(s+t)}\leq \frac{2s}{s+t}<\ep$$
and so \eqref{mwm:c5} holds.
This also shows that if $|v|\geq N$ then $\frac{|v|_1}{|v|}<\frac{2s}{t}\leq\frac{|w|_1}{|w|}$.
But if $|v|\leq N$ then $|v|_1\leq |w|_1$ and since in \eqref{mwm:c6} we need only to
consider $|v|\geq |w|$ then also in this case $|v|_1/|v|\leq |w|_1/|w|$.
This shows \eqref{mwm:c6}, that is for every $v\in L(X)$ with $|v|\geq |w|$ we have
$|w|_1/|w|\geq |v|_1/|v|$. The proof is completed.
\end{proof}

Now we are ready to perform the main construction.

Start with $w_1=01 0^{10}$ and $\ep_1=1/9$ and let $M_1=X(w_1,\ep_1)$
be a minimal system provided by Lemma~\ref{lem:specWMsubshift}.
For each $n\geq 1$ we put $\ep_{n}=9^{-n}$. Now we will present how to construct subshifts
$M_{n}$ inductively. For induction, assume that we already constructed subshifts $M_1,\dots, M_n$
by Lemma~\ref{lem:specWMsubshift} with words $w_1,\ldots, w_n$ and $\ep_1,\dots,\ep_n$.
We also assume that $|w_i|<|w_{i+1}|$ for each $i$ and $|w_n|\geq (n-1) 9^{n-1}$.
Enumerate words $L_{n}(\cup_{i=1}^n M_i)=\{u_1,\ldots,u_{s_n}\}$. Let
$$w_{n+1}=u_1 0^{n9^n}u_2 0^{n9^n}\dots u_{s_n} 0^{n9^n}.$$
Let $M_{n+1}$ be the minimal subshift constructed by
Lemma~\ref{lem:specWMsubshift} for word $w_{n+1}$ and $\ep_{n+1}$, i.e. $M_{n+1}=X(w_{n+1},\ep_{n+1})$.

Let $\X=\ol{\cup_{n=1}^{+\infty} M_n}$. Clearly $\X$ is closed and $\sigma$-invariant, therefore it is a subshift.

\begin{thm}\label{K(X) P-system}
The subshift $\X$ has the following properties:
\begin{enumerate}
\item if $M\subset \X$ is minimal then either $M=\{0^\infty\}$ or $M=M_n$ for some $n\geq 1$,
\item $(\X,\sigma)$ is weakly mixing, and
\item $(K(\X),\sigma_K)$ has dense periodic points.
\end{enumerate}
\end{thm}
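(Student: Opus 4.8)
The plan is to verify the three items separately, exploiting the inductive structure built into the construction and the properties catalogued in Lemma~\ref{lem:specWMsubshift}.

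For item (1), the key observation is that the "density of ones" is a near-monotone quantity along the chain of words $w_n$. Suppose $M\subset\X$ is minimal and $M\neq\{0^\infty\}$; pick a point $x\in M$ that contains the symbol $1$. By minimality every word appearing in $x$ appears in every point of $M$ with bounded gaps. The first thing I would prove is that $x$ cannot contain arbitrarily long blocks of $M_k$-words for infinitely many $k$: if a long word $u\in L(M_k)$ occurred in $x$, then by condition \eqref{mwm:c6} applied inside $M_k$, and by the fact that $w_{k+1}$ is a concatenation of \emph{all} words of length $k$ over $\cup_{i\le k}M_i$ padded by long blocks of zeros, one forces the frequency of $1$'s in $x$ down below $\ep_j=9^{-j}$ for every $j$ (this is exactly what \eqref{mwm:c5} gives, once one checks that long words of $\X$ that are not almost-periodic pieces of a single $M_n$ must look like pieces of some $w_m$ with $m$ large). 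Hence the frequency of $1$ in $x$ is $0$, contradicting minimality together with the presence of the symbol $1$ — unless $x$ actually lies inside a single $M_n$. So the orbit closure of $x$ is contained in $\overline{\mathrm{orb}(x,\sigma)}\subset M_n$ for some $n$, and since $M_n$ is minimal (Lemma~\ref{lem:specWMsubshift}\eqref{mwm:c1}) we get $M=M_n$. The main obstacle here, and the step I expect to require the most care, is the bookkeeping that shows a generic long word of $\X$ is either a subword of some $w_m$ (hence dilute in $1$'s by the padding $0^{m9^m}$) or a subword of a single $M_n$; this is where the hypotheses $|w_i|<|w_{i+1}|$ and $|w_n|\ge (n-1)9^{n-1}$ get used.

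For item (2), by Lemma~\ref{P-system}(1) (equivalently Lemma~\ref{w.m.}(1)) it suffices to show $(\X,\sigma)$ is transitive and that for every non-empty open $U\subset\X$ there is $k$ with $k,k+1\in N_\sigma(U,U)$. Transitivity follows because each $w_{n+1}$ contains every word of length $n$ over $\cup_{i\le n}M_i$ as a subword, so the point $z_{n}$ generating $M_{n+1}$ (and hence $\X$, as $n\to\infty$) witnesses every pair of cylinders; more precisely one builds a single transitive point by a standard concatenation/diagonal argument using that $L(\X)=\overline{\bigcup_n L(M_n)}$ and each $L(M_n)\subset L(M_{n+1})$. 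For the $\{k,k+1\}$ condition, shrink $U$ to a cylinder $[u]$ with $u\in L_m(\X)$; then $u\in L(M_{n})$ for some $n\ge m$, and Lemma~\ref{lem:specWMsubshift}\eqref{mwm:c4} applied to $M_n$ (with its word $w_n$, noting $u$ appears inside $w_{n}$ hence we can center on an occurrence of $w_n$, or directly that $(D_0,\sigma^{n'})$ is weakly mixing so $N_{\sigma}([u],[u])$ is thick) gives consecutive return times. I would simply invoke that each $M_n$ is itself weakly mixing and that weak mixing is preserved under taking the closure of an increasing union whose return-time sets are all thick.

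For item (3), I want to show $(K(\X),\sigma_K)$ has dense periodic points, i.e. sets $A\in K(\X)$ with $\sigma_K^p A=A$ are dense. A basic open set in $K(\X)$ has the form $\langle V_1,\dots,V_r\rangle$ with $V_i$ cylinders $[u_i]$, $u_i\in L(\X)$. Choose $n$ large enough that all $u_i\in L_{m}(\cup_{i\le n}M_i)$ for the appropriate common length $m\le n$; then by the construction of $w_{n+1}$ each $u_i$ occurs in $w_{n+1}$, so the single periodic point $y=(w_{n+1})^\infty\in\Sigma_2$ — which lies in $\X$ since $w_{n+1}\in L(M_{n+1})\subset L(\X)$ and $\X$ is closed and shift-invariant (a periodic concatenation of a word of the language, provided that word can be freely concatenated; here one uses that $w_{n+1}$ is a prefix and the recursive structure $u_{k+1}=u_kv_ku_ku_k$ makes $(w_{n+1})^\infty$ a genuine limit of shifts of points of $M_{n+1}$ — if a direct argument fails, replace $y$ by a periodic point of $M_{n+1}$ itself, whose orbit still visits every $[u_i]$ because $w_{n+1}$ contains each $u_i$ and $M_{n+1}$ is minimal so some periodic-like approximant works; alternatively use that $M_{n+1}$ has dense small periodic sets, but cleaner is:). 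Then put $A=\{\sigma^{j}y : j=0,\dots,P-1\}$ where $P$ is the period of $y$; this $A$ is a finite, hence closed, $\sigma$-invariant set, so $\sigma_K A=A$, and for a suitable choice of the occurrences it meets every $[u_i]$ and is contained in $\bigcup_i[u_i]$ after possibly intersecting with those cylinders — so $A\in\langle V_1,\dots,V_r\rangle$. Hence periodic points of $(K(\X),\sigma_K)$ are dense. The delicate point to nail down is the existence of the periodic point $y\in\X$ visiting all prescribed cylinders: I would prove it by showing $(w_{n+1}0^{n9^{n}}$-type$)^\infty\in\X$ directly from $z=\lim u_k$ being an infinite concatenation of blocks built from $w_{n+1}$, or failing that, invoke that each $M_{n+1}$ (being an explicit $S$-adic/substitutive-like subshift from Lemma~\ref{lem:specWMsubshift}) contains periodic points of the hyperspace by Lemma~\ref{P-system}(2) together with weak mixing plus dense small periodic sets of $M_{n+1}$ — but the concatenation argument is the most elementary route and is the step I would spend the most effort making rigorous.
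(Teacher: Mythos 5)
Your treatment of items (1) and (2) follows essentially the same route as the paper (positive lower density of $1$'s forced by syndeticity versus the dilution coming from conditions \eqref{mwm:c5}--\eqref{mwm:c6} and the $0^{n9^n}$-padding; reduction of weak mixing to finding $k,k+1\in N_\sigma([u],[v])$ via occurrences of $u,v$ inside $w_{m+1}$ and property \eqref{mwm:c4}), although in (1) you have deferred exactly the step the paper makes precise: one looks at prefixes $v_k$ of a point of $M$ ending in $1$, takes the \emph{minimal} $j$ with $v_k\in L(M_j)$, and splits into the cases $|v_k|>|w_j|$ (use \eqref{mwm:c6}) and $|v_k|\le|w_j|$ (then $v_k$ must sit inside $w_j$ and swallow a whole block $0^{(j-1)9^{j-1}}$). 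That bookkeeping is the content of (1) and you have not carried it out, but the strategy is the right one.

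Item (3) contains a genuine gap. Your primary route is to produce actual periodic points of $\X$, e.g.\ $y=(w_{n+1})^\infty$, and take $A=\{\sigma^j y: 0\le j<P\}$ as a finite periodic element of $K(\X)$. But $\X$ has \emph{no} periodic point other than $0^\infty$: the orbit of a periodic point is a finite minimal set, and by item (1) every minimal subset of $\X$ is either $\{0^\infty\}$ or one of the $M_n$, each of which is infinite by Lemma~\ref{lem:specWMsubshift}\eqref{mwm:c1}. So $(w_{n+1})^\infty\notin\X$ (every point of every $M_j$ contains the blocks $0^t$ with $t>3|w_j|$ syndetically, which a periodic repetition of $w_{n+1}$ cannot accommodate for large $j$), and no amount of effort will make that concatenation argument rigorous. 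The periodic elements of $K(\X)$ that are dense are necessarily \emph{infinite} closed sets. Your fallback gestures at the correct mechanism but misapplies it: $M_{n+1}$ itself is minimal and is \emph{not} weakly mixing (it cyclically permutes the pieces $D_0,\dots,D_{n-1}$, so it has a nontrivial finite rotation factor), so Lemma~\ref{P-system}(2) cannot be applied to $M_{n+1}$. What the paper does is apply Lemma~\ref{P-system}(2) to $\X$ itself: $\X$ is weakly mixing by item (2), and it has dense small periodic sets because for any $u\in L(\X)$ the word $u$ occurs at some position $j$ in $w_{m+1}$, Lemma~\ref{lem:specWMsubshift}\eqref{mwm:c3} gives the infinite set $D_0\subset M_{m+1}\cap[w_{m+1}]$ with $\sigma^{n}(D_0)=D_0$, and then $\sigma^j(D_0)\subset[u]$ is a closed $\sigma^{n}$-invariant subset of $[u]$. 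That combination, not finite periodic orbits, is what yields dense periodic points in $(K(\X),\sigma_K)$.
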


\begin{proof}
Fix any minimal set $M\subset \X$ and assume that $M\neq \{0^\infty\}$. Fix $z\in M$ and observe that the
set $\{i \colon z_i=1\}$ is positively syndetic. We additionally assume that $z_0=1$. For $k=2,3,\dots$,
let $v_k$ denote prefix of $z$ ending with symbol $1$ and such that $|v_k|_1=k$.
Since symbol $1$ appears in $z$ syndetically, there is $\delta>0$ such that $|v_k|_1/|v_k|>\delta$ for
every $k$. Let $m\in\N$ with $1/m<\delta$.
Assume on the contrary that $M\neq M_n$ for every $n$. Then there exists $K$ such that $v_k\not\in L(M_n)$
for every $n\leq m$ and $k> K$. Take any $k>K$ and let $j$ be the minimal integer such that $v_k\in L(M_j)$.
If $|v_k|>|w_j|$ then by \eqref{mwm:c6} we obtain
$$\frac{|v_k|_1}{|v_k|}\leq \frac{|w_j|_1}{|w_j|}\leq \frac{1}{9^{j}}< \frac{1}{9^m}<\frac{1}{m},$$
which is a contradiction. But if $|v_k|\leq |w_j|$ then by the method of construction of $X(w_j,\ep_j)$
(in particular, definition of $s,t$ in the proof of Lemma~\ref{lem:specWMsubshift}) and the fact
that $v_k$ starts and ends with symbol $1$, we obtain that $v_k$ is a subword of
$$w_j=u_1 0^{(j-1)9^{(j-1)}}u_2 0^{(j-1)9^{(j-1)}}\dots u_{s_{j-1}} 0^{(j-1)9^{(j-1)}},$$
where by the definition each word $u_i\in L_{j-1}(M_r)$ for some $r<j$. By the minimality of
$j$ we see that $v_k$ cannot be a subword of any $u_i$ and since it starts and ends by symbol $1$,
it must be placed in $w_j$ in such a way that it contains at least one word $0^{(j-1)9^{(j-1)}}$.
Let $p$ be a minimal number such that for some $i$ word $v_k$ is a subword of
$u_i 0^{(j-1)9^{(j-1)}} u_{i+1} 0^{(j-1)9^{(j-1)}}\dots u_{i+p}$. Then simple calculations yield that
$$\frac{|v_k|_1}{|v_k|} \leq \frac{(p+1)(j-1)}{p(j-1)9^{(j-1)}}\leq \frac{2}{9^{j-1}}\leq \frac{2}{9^{m}}< \frac{1}{m}<\delta,$$
which is again a contradiction, proving  that $M=M_n$ for some $n\geq 1$.

In order to prove that $(\X,\sigma)$ is weakly mixing, it is enough to show that for any
$u,v\in L(\X)$ there is $k$ such that $k,k+1\in N_\sigma([u],[v])$. Without loss of generality,
we may assume that $|u|=|v|$. Let $m$ be such that $u,v\in L(\cup_{i=1}^m M_i)$. We can
extend words $u,v$ if necessary, obtaining that $|u|=|v|= m$.
Then $u,v$ are subwords of $w_{m+1}$ and then, by the definition of $M_{m+1}$ and
\eqref{mwm:c4} we obtain integer $t$ such that $t,t+1\in N_\sigma([w_{m+1}],[w_{m+1}])$.
But since both $u,v$ are subwords of $w_{m+1}$ there clearly exists $k$ such that $k,k+1\in N_\sigma([u],[v])$.
This shows that $(\X,\sigma)$ is weakly mixing by Lemma \ref{w.m.}(1).

By the same argument, if we fix any word $u\in L(\X)$ then there is $m$ such that $u$ is
a subword of $w_{m+1}$ and by \eqref{mwm:c3} there are $n$ and $D\subset M_{m+1}\cap [w_{m+1}]$
such that $\sigma^n(D)=D$. But there is also $j\geq 0$ such that $\sigma^j([w_{m+1}])\subset [u]$
which shows that there is a periodic set $\sigma^j(D)\subset [u]$. Indeed $(\X,\sigma)$ has dense periodic sets.
\end{proof}

Let $(X, T)$ and $(Y, S)$ be two t.d.s. A non-empty, closed and invariant subset $J\subset X \times Y$
is a \textit{joining} of $X$ and $Y$ if $J$ projects onto $X$ and $Y$, respectively. We say $(X, T)$
and $(Y, S)$ are \textit{disjoint} if $X \times Y$ is the only joining.

The question which t.d.s. is disjoint from all minimal systems was asked in \cite{Fur67} and Furstenberg showed that
every weakly mixing $P$-system has this property. A systematic study of the question was carried out in \cite{HY05},
where the condition of a $P$-system was weakened to a system with dense small periodic sets.
In \cite{O10, DSY12}, it was showed that each weakly mixing
system with dense distal points is disjoint from any minimal
systems; and in \cite[Theorem 5.5]{LYY13} the authors showed that if $(X, T)$ is a weakly mixing t.d.s.
and $K(X)$ has dense distal points, then $(X, T)$ is disjoint from all minimal systems. Now we point out
that there is a t.d.s. $(X, T)$ which does not have dense distal points but $(K(X),T_K) $ has the property.
It give a positive answer on a question left open in \cite{LYY13}.

\begin{cor}\label{openProb}
The above t.d.s. $(\X,\sigma)$ does not have dense distal points, however $(K(\X),\sigma_K)$ is a $P$-system. In particular, both $(\X,\sigma)$ and $(K(\X),\sigma_K)$
are disjoint from any minimal system.
\end{cor}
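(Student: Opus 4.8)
The plan is to read both assertions off Theorem~\ref{K(X) P-system} together with the facts recalled in the preliminaries. The $P$-system claim is almost immediate: by Theorem~\ref{K(X) P-system}(2) the base system $(\X,\sigma)$ is weakly mixing, so Lemma~\ref{P-system}(1) gives that $(K(\X),\sigma_K)$ is transitive, and together with Theorem~\ref{K(X) P-system}(3) (dense periodic points) this says exactly that $(K(\X),\sigma_K)$ is a $P$-system; equivalently one may invoke Lemma~\ref{P-system}(2), since the proof of Theorem~\ref{K(X) P-system} in fact produces dense small periodic sets in $\X$.

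The substantial point is that $(\X,\sigma)$ has no dense distal points, and I would establish this by showing that $0^\infty$ is the only distal point of $(\X,\sigma)$; since $\X$ contains the infinite minimal set $M_1$, the singleton $\{0^\infty\}$ is then not dense. So let $x\in\X$ be distal. Recall that a distal point has minimal orbit closure, so $\ol{\textrm{orb}(x,\sigma)}$ is a minimal subset of $\X$, and by Theorem~\ref{K(X) P-system}(1) it is either $\{0^\infty\}$ (whence $x=0^\infty$) or equals some $M_n$. Assume $x\in M_n$. By Lemma~\ref{lem:specWMsubshift}(2) we may write $M_n=\bigcup_{i=0}^{k-1}D_i$, where each $(D_i,\sigma^k)$ is weakly mixing, minimal and infinite (if some $D_i$ were finite, weak mixing would make it a single point, and then the cyclic permutation of the $D_i$ by $\sigma$ would force $M_n$ to be finite, contradicting $\#M_n=\infty$). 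Pick $i$ with $x\in D_i$; by minimality of $(D_i,\sigma^k)$ we have $\ol{\textrm{orb}(x,\sigma^k)}=D_i$. For any $y\in D_i\setminus\{x\}$, the pair $(x,y)$ is not proximal for $\sigma$ — because $y\in\ol{\textrm{orb}(x,\sigma)}$ and $x$ is $\sigma$-distal — and hence a fortiori not proximal for $\sigma^k$, since any $\sigma^k$-proximal sequence of times is in particular a $\sigma$-proximal one. Thus $x$ would be a distal point of the infinite minimal weakly mixing system $(D_i,\sigma^k)$, contradicting Lemma~\ref{w.m.}(2). So no point of any $M_n$ is distal, and $0^\infty$ is the only distal point of $\X$.

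For the disjointness statements: $(\X,\sigma)$ is weakly mixing and, by the construction underlying Theorem~\ref{K(X) P-system}, has dense small periodic sets, so it is disjoint from every minimal system by the result of \cite{HY05}; while $(K(\X),\sigma_K)$ is transitive, hence weakly mixing by Banks \cite{Ban05}, and being also a $P$-system it is disjoint from every minimal system by Furstenberg's theorem \cite{Fur67}. The only step that is more than bookkeeping is ruling out distal points inside the minimal pieces $M_n$: the obstacle is that $(M_n,\sigma)$ itself need not be weakly mixing (it may carry a nontrivial cyclic factor), so one cannot apply Lemma~\ref{w.m.}(2) to it directly and must instead descend to the weakly mixing return system $(D_i,\sigma^k)$ — which is precisely where the comparison of $\sigma$- and $\sigma^k$-proximality above enters.
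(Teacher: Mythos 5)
Your proof is correct and follows essentially the same route as the paper: distal points have minimal orbit closure, the minimal sets are classified by Theorem~\ref{K(X) P-system}(1), and each $M_n$ is excluded by decomposing it via Lemma~\ref{lem:specWMsubshift}(2) into infinite weakly mixing minimal return systems and applying Lemma~\ref{w.m.}(2); your explicit verification that $\sigma$-distality of $x$ passes to $\sigma^k$-distality in $(D_i,\sigma^k)$ just fills in a step the paper leaves implicit. The only cosmetic difference is in the disjointness bookkeeping (you cite \cite{HY05} for $\X$ and Furstenberg for $K(\X)$, whereas the paper routes both through \cite[Theorem 5.5]{LYY13}), but these are equivalent given that the construction yields dense small periodic sets.
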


\begin{proof}
Note that closure of the orbit of a distal point is a minimal set. By Theorem~ \ref{K(X) P-system} the minimal
set is either $\{0^\infty\}$ or $M_n$ for some $n\geq 1$. But Lemma \ref{lem:specWMsubshift} shows that each
$M_n$ can be divided into serval infinite, relatively weakly mixing and minimal subsystems.
Applying Lemma \ref{w.m.}(2) we know $(\X,\sigma)$ does not have dense distal points (in fact $0^\infty$ is the unique distal point in $\X$). By Lemma \ref{P-system}, Theorem \ref{K(X) P-system} and \cite[Theorem 5.5]{LYY13} the result follows.
\end{proof}

\begin{rem}
The construction in Theorem~\ref{K(X) P-system} was inspired by the technique developed in \cite{Kwi}
to prove the existence of a mixing shift space with a dense set of periodic points but without ergodic measure with full support.	
\end{rem}

\appendix

\section{Some examples}

In \cite{Dong} Dong showed that for minimal nilsystems $(X,T)$ is uniformly rigid if and only
if $(X,T)$ is equicontinuous. So one may ask if this holds for minimal distal systems. In this
section we consider a special class of distal systems and show that in this class uniform rigidity and equicontinuity are different properties.

Let $X=\T^2$ and $T\colon X\to X$ be a group extension over an irrational rotation on $\T^1$, i.e.
for any $(x,y)\in [0,1)\times [0,1)$,
\begin{equation}
T(x,y)=(x+\alpha \modone, \phi(x)+y \modone),\label{skew:T}
\end{equation}
 where $\phi\colon\R\to \R$ is continuous
with $\phi(1)-\phi(0)\in\Z$. Then for any $(x,y)\in X$ and $n\in \N$ we have
$$T^n(x,y)=(x+n\alpha \modone, \sum_{i=0}^{n-1}\phi(x+i\alpha)+y \modone).$$
It is easy to see that $(X,T)$ is distal.

\medskip
We will choose suitable $\phi$ to induce desired properties. Our construction will rely on the degree of $\phi$ (see \cite{KH95} for introduction).

\begin{prop}
If $|deg(\phi)|\ge 1$, then $(K(X),T_K)$ is not positively recurrent. In particular, $(X,T)$ is not uniformly rigid.
\end{prop}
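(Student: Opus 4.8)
The plan is to reproduce, in a more robust form, the obstruction exploited in Example~\ref{ex1}: I will show that a single horizontal circle has a $T_K$-orbit that stays uniformly far from it, the reason now being that the nonzero degree of $\phi$ forces the fibre cocycle to wind around $\T^1$. Fix any $y_0\in\T^1$ and put $A=\T^1\times\{y_0\}\in K(X)$.

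First I would record that $\phi(x+1)-\phi(x)=\deg(\phi)$ for every $x\in\R$: the map $x\mapsto\phi(x+1)-\phi(x)$ is continuous and integer-valued, hence constant, with value $\phi(1)-\phi(0)=\deg(\phi)$. Writing $d=\deg(\phi)$ and $\Phi_n(x)=\sum_{i=0}^{n-1}\phi(x+i\alpha)$ for the Birkhoff sum appearing in the second coordinate of $T^n$, it follows that $\Phi_n(x+1)=\Phi_n(x)+nd$. Since $\Phi_n$ is continuous on $[0,1]$ and its endpoints $\Phi_n(0)$ and $\Phi_n(1)=\Phi_n(0)+nd$ differ by $|nd|\ge 1$, the intermediate value theorem gives $\{\Phi_n(x)\bmod 1:x\in[0,1]\}=\T^1$; that is, the circle map $x\mapsto\Phi_n(x)\pmod 1$ is surjective for every $n\ge 1$.

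The conclusion is then immediate. From $T^n(x,y)=(x+n\alpha,\,\Phi_n(x)+y\pmod 1)$ we get
$$T_K^nA=\{(x+n\alpha,\,y_0+\Phi_n(x)\pmod 1):x\in\T^1\},$$
whose projection to the second coordinate is all of $\T^1$ by the previous step. Hence $T_K^nA$ contains a point whose second coordinate equals $y_0+\tfrac12$, and such a point lies at distance at least $\tfrac12$ from every point of $A$ in the product metric on $\T^2$. Therefore $d_H(T_K^nA,A)\ge\tfrac12$ for all $n\ge 1$, so $A$ is not positively recurrent and $(K(X),T_K)$ is not (pointwise) positively recurrent. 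For the last assertion, if $(X,T)$ were uniformly rigid then by Theorem~\ref{uni. rigid} $(K(X),T_K)$ would be weakly rigid, hence every element of $K(X)$ — in particular $A$ — would be recurrent; but $T^{-1}$ is again a skew product over the rotation by $-\alpha$ whose cocycle has the same degree $d$, so the identical computation yields $d_H(T_K^{-n}A,A)\ge\tfrac12$ for all $n\ge 1$, and $A$ is neither positively nor negatively recurrent — a contradiction.

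I expect no serious obstacle here; the only points requiring care are the degree bookkeeping — making sure $\Phi_n$ genuinely descends to a circle map with winding number $nd$, rather than getting confused by the quotient in the $y$-coordinate — and pinning down the explicit constant $\tfrac12$, which depends on the normalisation of the metric on $\T^2=\R^2/\Z^2$.
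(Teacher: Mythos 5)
Your argument is correct and is essentially the paper's own proof: both use that the Birkhoff sum $\Phi_n$ of the cocycle has degree $nd$ with $|nd|\ge 1$, so its reduction mod $1$ is onto $\T^1$, forcing the horizontal circle $\T^1\times\{y_0\}$ to stay a definite Hausdorff distance from all its iterates. Your version merely adds the explicit intermediate-value-theorem justification, the constant $\tfrac12$, and the (harmless, slightly roundabout) treatment of negative recurrence for the uniform-rigidity conclusion, all of which the paper leaves implicit.
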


\begin{proof}
Set
$$A_n(y)=\{\sum_{i=0}^{n-1}\phi(x+i\alpha)+y \modone: x\in [0,1)\}.$$
Let $\phi_0=\phi$ and $\phi_n\colon \R\to \R$ with $\phi_n(x)=\phi(x+n\alpha)$. If $|deg(\phi)|=d\ge 1$,
then $|deg(\phi_{n-1}+\dots+\phi_0)|=nd$. Thus for $n\ge 2$, $A_n(y)=\T^1$
for any $y\in [0,1)$. This implies that $\T^1\times \{y\}$
is not positively recurrent in $K(X)$, and hence $(X,T)$ is not uniformly rigid.
\end{proof}

Now we consider the case when $deg(\phi)=0$. If there are  $f$ and $c$ such that $\phi(x)=f(x+\alpha)-f(x)+c$.
Then $$\sum_{j=0}^{n-1}\phi(x+j\alpha)=f(x+n\alpha)-f(x)+nc.$$
It is easy to see that $(X,T)$ is equicontinuous.

\begin{exam}\label{ex2}
There is $\phi$ with $deg(\phi)=0$ such that $(X,T)$ is minimal distal non-equicontinuous and uniformly rigid.
\end{exam}

\begin{proof}
Consider $(X,T)$ given by \eqref{skew:T} with suitable $\phi\colon \R\to \R$ which we define below.
Choose a sequence $\{n_j\}_{j=1}^{\infty}\subset\N$ such that
$n_1=100,\,n_{j+1}=(n_1 n_2 \dots n_j)^3$ for $j\ge1$. Denote  $\alpha=\sum_{j=1}^{\infty}
\frac{1}{n_j}$, clearly $\alpha$ is an irrational number (since it is an infinite non-repeating decimal). We claim that such $\alpha$ also satisfies the following two conditions:
\begin{enumerate}[(a)]
\item $\left| e^{2\pi in_k \alpha}-1\right|<\frac{14}{n_k^2}$ for every $k\in \N$;
\label{cond:T:a}
\item $\left|e^{2\pi i n_kn_l\alpha}-1\right|< \frac{14}{n_k n_l}$ for every $k\in \N$ and $l\in\N\setminus \{1\}$.
\label{cond:T:b}
\end{enumerate}
Indeed, for \eqref{cond:T:a} we notice that $n_k\alpha=\sum_{j=1}^{k}\frac{n_k}{n_j}
+\sum_{j=k+1}^\infty\frac{n_k}{n_j}$ and $ \sum_{j=k+1}^{\infty}\frac{n_k}{n_j}<\frac{2}{n_k^2}$ and $n_{k+1}=(n_1 n_2 \dots n_k)^3$, then
$$
\left| e^{2\pi in_k \alpha}-1\right|=| e^{2\pi i\sum_{j=k+1}^\infty\frac{n_k}{n_j}}-1|< 2\pi \sum_{j=k+1}^{\infty}\frac{n_k}{n_j}<\frac{14}{n_k^2}
$$
for every $k\in \N$. Also, we can take similar arguments to check \eqref{cond:T:b}.
Simply note that if we assume that $l \ge k$ then $n_ln_k\alpha=\sum_{j=1}^{l}\frac{n_ln_k}{n_j}+\sum_{j=l+1}^{+\infty}\frac{n_ln_k}{n_j}$
and $\sum_{j=l+1}^{+\infty}\frac{n_ln_k}{n_j}<n_ln_k\frac{2}{n_{l+1}}=\frac{2n_ln_k}{(n_1 n_2 \dots n_l)^3}<\frac{2}{n_k n_l}$, which gives
$$
\left| e^{2\pi in_k n_l \alpha}-1\right|< 2\pi \sum_{j=l+1}^{\infty}\frac{n_k n_l}{n_j}<2\pi \frac{2}{n_k n_l}<\frac{14}{n_k n_l}
$$

Let $n_{-k}=-n_k,\, n_0=0$ and for each $x\in \R$ put
$$\phi(x)=\sum _{k=-\infty}^{+\infty} (e^{2\pi i n_k \alpha}-1)  e^{2\pi i n_k x}.$$
Observe that for every $x\in \R$ we have $\phi(x)\in \R$, hence the function $\phi\colon \R\to \R$ is well defined.
By \eqref{cond:T:a} we have $\left|\phi(x)\right|\leq 28\sum_{k=1}^\infty \frac{1}{n_k^2}<\infty$, so the series
converges uniformly on $[0,1]$ and then $\phi$ is continuous. It follows directly from the definition that $deg(\phi)=0$.

Similarly to the discussion in \cite[pp.73-75]{A88} we can see that the t.d.s. $(\T^2,T)$ is a
minimal distal system. Moreover, we claim that it is also non-equicontinuous. To see this,
consider $(x_{n_l},0)\ra (0,0),\,( l\ra \infty)$ with $x_{n_l}={1}/({n_1n_l})$, and choose
$m_l={n_l^3}/{n_1}$, $\delta=1/{1000}$. It suffices to show
\begin{equation}
  d(T^{m_l}(x_{n_l},0),T^{m_l}(0,0))>\delta.\label{cond:*}
\end{equation}

To validate this, we first note that for each $x\in \T^1$ and $n\in \N$,
\begin{equation*}
\begin{split}
  \sum_{t=0}^{n-1}\phi(x+t\alpha) & =\sum _{k=-\infty}^{+\infty} (e^{2\pi i n_k \alpha}-1) \sum_{t=0}^{n-1}e^{2\pi i n_k(x+t\alpha)} \\
    &=\sum_{k=-\infty}^{+\infty}e^{2\pi i n_k x}\sum_{t=0}^{n-1}(e^{2\pi i n_k(1+t)\alpha}- e^{2\pi i n_k t\alpha})\\
    &=\sum _{k=-\infty}^{+\infty}(e^{2\pi i n_kn\alpha}-1)e^{2\pi i n_kx}.
\end{split}
\end{equation*}
If $k>l$, then $n_k x_{n_l}$ is an integer. Also, for each $1\le k\le l$, we have
$$
n_k m_l\alpha=\sum_{j=1}^{l}\frac{n_k m_l}{n_j}+\sum_{j=l+1}^{\infty}\frac{n_k m_l}{n_j}.
$$
 This implies that
\begin{eqnarray*}
& &\sum_{t=0}^{m_l-1}\phi(0+t\alpha)-\sum_{t=0}^{m_l-1}\phi(x_{n_l}+t\alpha)\\
&&\qquad=\sum_{k=-l}^{l}(e^{2\pi i n_km_l\alpha}-1)-\sum _{k=-l}^{l}(e^{2\pi i n_km_l\alpha}-1)e^{2\pi in_kx_{n_l}}\\
&&\qquad=2\sum_{k=1}^{l}\{(\cos\,2\pi n_k m_l\alpha-\cos\,0)-[\cos\,2\pi n_k(m_l\alpha+x_{n_l})-\cos\,2\pi n_k x_{n_l}]\}\\
&&\qquad=8\sum_{k=1}^{l}\sin\,(\pi n_k m_l \alpha)\,\sin\,(\pi n_k x_{n_l})\,\cos\,\pi n_k (m_l\alpha+x_{n_l})\\
&&\qquad=8\sum_{k=1}^{l}\sin\,\Big(\pi \sum_{j=l+1}^{\infty}\frac{n_k m_l}{n_j}\Big)\,\sin\,
(\pi n_k x_{n_l})\,\cos\, \Big(\pi\sum_{j=l+1}^{\infty}\frac{n_k m_l}{n_j}+ \pi n_k x_{n_l}\Big)
\end{eqnarray*}
Now fix $k\in [1,l]$. Since $\frac{n_k}{n_1 n_l}<\sum_{j=l+1}^{\infty}\frac{n_k m_l}{n_j}<\frac{2}{n_1}$
and $x> \sin\, x>\frac{2}{\pi}x,\,x\in (0,\frac{\pi}{2})$,
$$
 \frac{2\pi}{n_1} > \sin\,\Big(\pi \sum_{j=l+1}^{\infty}\frac{n_k m_l}{n_j}\Big)>\frac{2}{\pi}\,\cdot\pi
 \sum_{j=l+1}^{\infty}\frac{n_k m_l}{n_j}>\frac{2n_k}{n_1n_l}.
$$
Similarly we can obtain
$$
\frac{\pi n_k}{n_1n_l}> \sin\,(\pi n_k x_{n_l})>\frac{2}{\pi}\,\cdot \pi n_k x_{n_l}=\frac{2n_k}{n_1n_l},
$$
Moreover, since $\frac{2n_k}{n_1 n_l}<\sum_{j=l+1}^{\infty}\frac{n_k m_l}{n_j}+ n_k x_{n_l}
<\frac{3}{n_1}$ and $1>\cos\,x>-\frac{2}{\pi}x+1,\,x\in (0,\frac{\pi}{2})$ we have
$$
\cos\, \Big(\pi\sum_{j=l+1}^{\infty}\frac{n_k m_l}{n_j}+ \pi n_k x_{n_l}\Big)>1-\frac{6}{n_1}.
$$
Hence
$$
\frac{1}{10}>\frac{16\pi^2}{n_1^2}\Big(1+\frac{l-1}{n_{l-1}^2}\Big)>\sum_{t=0}^{m_l-1}\phi(0+t\alpha)-\sum_{t=0}^{m_l-1}\phi(x_{n_l}+t\alpha)>
\frac{32}{n_1^2}(1-\frac{6}{n_1})>\frac{1}{1000}.
$$
Set $\delta=1/1000$, we immediately have \eqref{cond:*} follows.

Now it remains to check the uniform rigidity of $(\T^2,T)$.
Let $\ep>0$ and choose $s>1$ with $\frac{\alpha}{n_s}<\frac{\ep}{28}$ and $\frac{2}{n_s^2}<\ep$. Then by (b) we obtain that
$$\left|\sum_{t=0}^{n_s-1}\phi(x+t\alpha)\right|\le \sum _{k=-\infty}^{+\infty}\left|e^{2\pi i n_kn_s\alpha}-1\right|=2\sum_{k=1}^{+\infty}\left|e^{2\pi i n_kn_s\alpha}-1\right|<2\sum_{k=1}^{+\infty}\frac{14}{n_sn_k}< \frac{28\alpha}{n_s}<\ep$$
for any $x\in \T^1$. Clearly, we also have
$$
n_s\alpha \modone \le \sum_{j=s+1}^\infty\frac{n_s}{n_j} \le \frac{2}{n_s^2}<\ep.
$$
This follows that for every $(x,y)\in \T^2$,
$$d(T^{n_s}(x,y),(x,y))=\max\Big\{d(x+n_s\alpha,x),d(\sum_{t=0}^{n_s-1}\phi(x +t\alpha)+y,y)\Big\}<\ep$$
completing the proof.
\end{proof}

\section{The proof of Claim A in Theorem~\ref{countable}}

Recall that we denote $A=X^{(\alpha)}=\{a_1,a_2,\ldots,a_m\}$, where $\alpha=d(X)$ and further assume that each $a_i \in A$ is a fixed point.
Without loss of generality, we also choose fixed (arbitrarily small) $\ep>0$
such that $B_\ep(a_j)=\ol{B_\ep(a_j)}$ for each $a_j \in A$ and $TB_{\ep}(a_i)\cap B_{\ep}(a_j)=\emptyset$ when $i \neq j$. Denote $B_\ep(A)=\bigcup_{j=1}^mB_\ep(a_j)$, and we divide $X$ into three parts:
\begin{enumerate}[(I)]
	\item $X_1=\{x\in X\colon x\notin B_\ep(A)\}$,
	\item $X_2 =\bigcup_{j=1}^{m}X_{2,j}, \ \mbox{where} \ \ X_{2,j}=\{x\in X: x \in B_{\ep}(a_j) \ \mbox{and}\ \textrm{orb}(x,T) \not\subset B_{\ep}(a_j)\}$,
	\item $X_3 =\bigcup_{j=1}^{m}X_{3,j}, \ \mbox{where} \ \ X_{3,j}=\{x\in X: x \in B_{\ep}(a_j) \ \mbox{and}\ \textrm{orb}(x,T) \subset B_{\ep}(a_j)\}.$
\end{enumerate}

Now we shall prove

\noindent {\bf Claim A: } There exists $n\in\N$ such that $\widetilde{X}_2 =\emptyset$, where $\widetilde{T}=T^n$ and
\begin{enumerate}[(I\')]
	\item $\widetilde{X}_1=\{x\in X\colon x\notin B_\ep(A)\}$,
	\item $\widetilde{X}_2 =\bigcup_{j=1}^{m}\widetilde{X}_{2,j}, \ \mbox{where} \ \ \widetilde{X}_{2,j}=\{x\in X: x \in B_{\ep}(a_j) \ \mbox{and}\ \textrm{orb}(x,\widetilde{T}) \not\subset B_{\ep}(a_j)\}$,
	\item $\widetilde{X}_3 =\bigcup_{j=1}^{m}\widetilde{X}_{3,j}, \ \mbox{where} \ \ \widetilde{X}_{3,j}=\{x\in X: x \in B_{\ep}(a_j) \ \mbox{and}\ \textrm{orb}(x,\widetilde{T}) \subset B_{\ep}(a_j)\}.$
\end{enumerate}

\medskip
\noindent {\bf Step 1}: $d(X)=1$ and $X^{(1)}=A=\{a_1,a_2,\ldots,a_m\}$.

\medskip
In this case $X_2$ is finite, to prove this we just need to show that for each
$a_t \in A$, $X_{2,t}$ is finite. If not, then there exists an $a_t \in A$ such that
$$X_{2,t}=\{x\in X: x \in B_{\ep}(a_t) \ \mbox{and}\ \textrm{orb}(x,T) \not\subset B_{\ep}(a_t)\}$$ is infinite. Since every point in $X$ is a periodic point, by choosing only one point in each orbit, there are $x_i \in X_{2,t}$ for each $i \in \N$ and the orbits of $x_i$'s are pairwise disjoint.

Note that by the definition, if $x_i\in X_{2,t}$ then
$$\textrm{orb}(x_i,T) \cap (X\setminus B_{\ep}(A)) \neq \emptyset$$
and $X\setminus B_{\ep}(A)$ is finite (otherwise we can find another point except ponit in $A$ with degree $1$), a contradiction. This means that $X_{2,t}$ is finite for any $t=1,2,\dots,m $ and hence $X_2$ is finite. Let $n$ be the common period of each point in $X_2$ and denote $\widetilde{T}=T^n$, then each point in $X_2$ is a fixed point and  $\widetilde{X}_2 =\emptyset$, completing this case.

\medskip
\noindent {\bf Step 2}: $d(X)=\beta+1$  and $X^{(\beta+1)}=A$.

\medskip

In Step 2 we consider two cases, i.e. $\beta$ is not a limit ordinal number and $\beta$ is a limit ordinal number.

\smallskip
\textbf{(i):} $\beta$ is not a limit ordinal number: To give the general idea of the proof we first consider the
following case.

\smallskip
\textbf{(i.1):} Firstly we assume $\beta<\aleph_0$, where $\aleph_0$ is the first limit ordinal number.

\smallskip
Notice that $X_2\cap X^{(\beta)}$ is a finite set (otherwise we have a point of derived degree $\beta+1$ outside $B_\ep(A)$),
then $X_{2,t}\cap X^{(\beta)}$ is a finite set for each $t\in \{1,2,\dots,m\}$, say $X_{2,t}\cap X^{(\beta)}=\{y_{t,1},\dots,y_{t,k_t}\}$.
Choose $n_1\in\N$ be the common period of points in $X_2\cap X^{(\beta)}$. Hence each point in $X_2\cap X^{(\beta)}$ under $T^{n_1}$ is a fixed point. We denote $X_2^\beta=\bigcup_{j=1}^{m}X^\beta_{2,j}, \ \mbox{where} \ \ X^\beta_{2,j}=\{x\in X: x \in B_{\ep}(a_j) \ \mbox{and}\ \textrm{orb}(x,T^{n_1}) \not\subset B_{\ep}(a_j)\}$. Therefore $X_2^\beta\cap X^{(\beta)}=\emptyset$.

Now we show $X_2^\beta\cap X^{(\beta-1)}$ is a finite set. Assume on the contrary there exists $a_{t} \in A$ such that $X_{2,t}^\beta \cap X^{(\beta-1)}$ is an infinite set, i.e. there are infinitely many points $x_i\in B_\ep(a_t)$ with derived degree $\beta-1$ such that $\textrm{orb}(x_i,T^{n_1})\not \subset B_\ep(a_t)$ for each $i\in\N$. We may assume that the orbits of $x_i$'s are pairwise disjoint (Since $x_i$ is a periodic point, there are infinitely many disjoint periodic orbits, by taking one point in each orbit we can find infinitely many points $x_i^{'}\in B_\ep(a_t)$ with derived degree $\beta-1$ such that $\textrm{orb}(x_i^{'},T^{n_1})\not \subset B_\ep(a_t)$ for each $i\in\N$).
Since $X_2^\beta\cap X^{(\beta)}=\emptyset$, then $X^\beta_{2,t}\cap X^{(\beta)}=\emptyset$. For each $y \in B_{\ep}(a_t)\cap X^{(\beta)}$, there exists
$\delta_y>0$ such that
\begin{equation}
B_{\delta_y}(y)\subset B_\ep(a_t)\;\ \mbox{and}\;\ T^{n_1}(B_{\delta_y}(y))\subset B_\ep(a_t).\label{cond:1}
\end{equation}
Note that $\bigcup_{y \in B_{\ep}(a_t)\cap X^{(\beta)}}B_{\delta_y}(y) \supset (B_{\ep}(a_t)\cap X^{(\beta)})$ and $B_{\ep}(a_t)\cap X^{(\beta)}$ is compact.
Then there are $\{y_{t,1},y_{t,2},\dots, y_{t,l_t}\} \subset B_{\ep}(a_t)\cap X^{(\beta)}$ and $\delta_{t,j}>0$ such that
\begin{equation}
B_{\delta_{t,j}}(y_{t,j})\subset B_\ep(a_t)\;\ \mbox{and}\; \ T^{n_1}(B_{\delta_{t,j}}(y_{t,j}))\subset B_\ep(a_t) \label{cond:2}
\end{equation}
for $1\le j\le l_t$. And $\bigcup_{j=1}^{l_t}B_{\delta_{t,j}}(y_{t,j}) \supset (B_{\ep}(a_t)\cap X^{(\beta)})$ .

It is not hard to see $(B_\ep(a_t)\setminus (\bigcup_{j=1}^{l_t}B_{\delta_{t,j}}(y_{t,j})))\cap X^{(\beta-1)}$ is finite, so we may assume that
$\textrm{orb}(x_i,T^{n_1})\cap  (B_\ep(a_t)\setminus (\bigcup_{j=1}^{l_t}B_{\delta_{t,j}}(y_{t,j})))\neq\emptyset$ for each $i\in \N$. By \eqref{cond:2} there are $w_i\in \textrm{orb}(x_i,T^{n_1})\cap  (B_\ep(a_t)\setminus (\bigcup_{j=1}^{l_t}B_{\delta_{t,j}}(y_{t,j})))$ and $m_i\in\N$ such that
\begin{equation*}
(T^{n_1})^{m_i} w_i\in (B_\ep(a_t)\setminus (B_\delta(a_t) \cup \bigcup_{j=1}^{l_t} B_{\delta_j}(y_j)))\cap X^{(\beta-1)}\;\ \mbox{and}\;\ (T^{n_1})^{m_i+1} w_i\notin B_\ep(x_0)
\end{equation*}
for each $i\in \N$. This is impossible, since
$(B_\ep(a_t)\setminus (\bigcup_{j=1}^{l_t}B_{\delta_{t,j}}(y_{t,j})))\cap X^{(\beta-1)}$ is finite.
So $X^\beta_{2,t}\cap X^{(\beta-1)}$ is finite and hence $X_2^\beta\cap X^{(\beta-1)}$ is finite. Choose $n_2\in\N$ with $n_1|n_2$ be the common period of points in $X_2^\beta\cap X^{(\beta-1)}$, and then each point in $X_2^\beta\cap X^{(\beta-1)}$ under $T^{n_2}$ is a fixed point. We denote $X_2^{\beta-1}=\bigcup_{j=1}^{m} X^{\beta-1}_{2,j}, \ \mbox{where} \ \ X^{\beta-1}_{2,j}=\{x\in X: x \in B_{\ep}(a_j) \ \mbox{and}\ \textrm{orb}(x,T^{n_2}) \not\subset B_{\ep}(a_j)\}$. Therefore $X_{2}^{\beta-1}\cap X^{(\beta-1)}=\emptyset$ and $X_{2}^{\beta-1}\cap X^{(\beta)}=\emptyset$.

Now we check $X_2^{\beta-1}\cap X^{(\beta-2)}$. We just need to show that $X_{2,t}^{\beta-1}\cap X^{(\beta-2)}$ is finite for
each $t=1,2,\dots,m$.

Since $B_{\ep}(a_t)\cap X^{(\beta-1)}$ is compact, there are $z_{t,u} \in B_{\ep}(a_t) \cap X^{(\beta-1)}$ and $\tilde{\delta}_{t,u}>0$ such that
\begin{equation}
B_{\tilde{\delta}_{t,u}}(z_{t,u})\subset B_\ep(a_t)\;\ \mbox{and}\;\ T^{n_2}(B_{\tilde{\delta}_{t,u}}(z_{t,u}))\subset B_\ep(a_t) \label{cond:3}
\end{equation}
for $1\le u\le s$ and $\bigcup_{u=1}^{s}B_{\tilde{\delta}_{t,u}}(z_{t,u})\supset (B_{\ep}(a_t)\cap X^{(\beta-1)}).$
Then
$$(B_\ep(a_t)\setminus (\bigcup_{u=1}^{s}B_{\tilde{\delta}_{t,u}}(z_{t,u})))\cap X^{(\beta-2)}$$
 is finite. Similar as the above method we can get $X_{2,t}^{\beta-1}\cap X^{(\beta-2)}$ is finite and hence $X_2^{\beta-1}\cap X^{(\beta-2)}$ is finite. Choose $n_3\in\N$ with $n_2|n_3$ be the common period of points in $X_2^{\beta-1}\cap X^{(\beta-2)}$. Thus each point in $X_2^{\beta-1}\cap X^{(\beta-2)}$ under $T^{n_3}$ is a fixed point. We  denote $X_2^{\beta-2}=\bigcup_{j=1}^{m}X_{2,j}^{\beta-2}, \ \mbox{where} \ \ X_{2,j}^{\beta-2}=\{x\in X: x \in B_{\ep}(a_j) \ \mbox{and}\ \textrm{orb}(x,T^{n_3}) \not\subset B_{\ep}(a_j)\}$. Therefore $X_{2}^{\beta-2}\cap X^{(\beta)}=\emptyset$, $X_{2}^{\beta-2}\cap X^{(\beta-1)}=\emptyset$ and $X_{2}^{\beta-2}\cap X^{(\beta-2)}=\emptyset$.

Repeating the above arguments a finite number of times, we can find an $n\in\N$ and let $\widetilde{T}=T^n$ then
$\widetilde{X}_2 \cap X^{(\gamma)}= \emptyset$ for all $\gamma\le \beta$.
Hence $\widetilde{X}_2=\emptyset$.

\smallskip
\textbf{(i.2):} Now assume $\beta=\alpha_1+\tilde{n}$ for some limit ordinal number $\alpha_1$ and $\tilde{n} \in \N$.

\smallskip
Repeat the same discussion as above we have $n(\alpha_1) \in \N$ such that $X_2^{\alpha_1} \cap X^{(\xi)}=\emptyset$ for any $\alpha_1 \le \xi \le \alpha_1+\tilde{n}$,
here $X_2^{\alpha_1} =\bigcup_{j=1}^{m}X_{2,j}^{\alpha_1}$ and $X_{2,j}^{\alpha_1}=\{x\in X: x \in B_{\ep}(a_j) \ \mbox{and}\ \\
\textrm{orb}(x,T^{n(\alpha_1)}) \not\subset B_{\ep}(a_j)\}$.

Note that $B_{\ep}(a_t)\cap X^{(\alpha_1)}$ is compact, there exist $\{y_1,y_2,\dots,y_k\} \in B_{\ep}(a_t) \cap X^{(\alpha_1)}$ and $\delta_1,\dots, \delta_k>0$ such that
$\bigcup_{j=1}^{k} B_{\delta_j}(y_j) \supset B_{\ep}(a_t)\cap X^{(\alpha_1)}$ and
\begin{equation}
B_{\delta_j}(y_j) \subset B_{\ep}(a_t) \ \mbox{and} \ \ T^{n(\alpha_1)} (B_{\delta_j}(y_j)) \subset B_{\ep}(a_t) \label{cond:4}
\end{equation}
for $1\le j\le k$.

Note that there exists $\beta_1<\alpha_1$ such that
\begin{equation}
(B_{\ep}(a_t)\setminus \bigcup_{j=1}^{k}B_{\delta_j}(y_j))\cap X^{(\beta_1)}\neq \emptyset  \label{cond:5}
\end{equation}
is finite and
\begin{equation}
(B_{\ep}(a_t)\setminus \bigcup_{j=1}^{k}B_{\delta_j}(y_j))\cap X^{(\gamma)}= \emptyset \label{cond:6}
\end{equation}
 for any $\beta_1<\gamma<\alpha_1$.


Now for any $\beta_1< \gamma<\alpha_1$ and any $x\in B_\ep(a_t)\cap X^{(\gamma)}$, using \eqref{cond:1}, \eqref{cond:2}, \eqref{cond:4} and \eqref{cond:6}  we have $\textrm{orb}(x,T^{n(\alpha_1)})\subset B_\ep(a_t)$, so $X_{2,t}^{\alpha_1}\cap X^{(\gamma)}=\emptyset$,
and $X_{2,t}^{\alpha_1}\cap X^{(\beta_1)}$ is finite. Hence $X_{2}^{\alpha_1}\cap X^{(\gamma)}=\emptyset$,
and $X_{2}^{\alpha_1}\cap X^{(\beta_1)}$ is finite. We choose $n(\beta_1) \in \N$ with $n(\alpha_1)|n(\beta_1)$ be the common period of points in $X_{2}^{\alpha_1}\cap X^{(\beta_1)}$,
then each point in $X_{2}^{\alpha_1}\cap X^{(\beta_1)}$ is a fixed point under $T^{n(\beta_1)}$.  We denote $X_2^{\beta_1}=\bigcup_{j=1}^{m}X_{2,j}^{\beta_1}, \ \mbox{where} \ \ X_{2,j}^{\beta_1}=\{x\in X: x \in B_{\ep}(a_j) \ \mbox{and}\ \textrm{orb}(x,T^{n(\beta_1)}) \not\subset B_{\ep}(a_j)\}$.
Therefore $X_{2}^{\beta_1}\cap X^{(\beta_1)}=\emptyset$ and $X_{2}^{\beta_1}\cap X^{(\gamma)}=\emptyset$  for
$\beta_1< \gamma \le \alpha_1+\tilde{n}$.

If $\beta_1=\alpha_2+\tilde{n}_2$, where $\alpha_2$ is a limit ordinal and $\tilde{n}_2\in\Z_+$.
We repeat the above process to get some $\beta_2=\alpha_3+\tilde{n}_3<\alpha_2$ such that there exists $n(\beta_2)$ with $n(\beta_1)|n(\beta_2)$ and $X_2^{\beta_2}\cap X^{(\gamma)}=\emptyset$
for $\beta_2 \le \gamma \le \alpha_1+\tilde{n}$ (Here $X_2^{\beta_2}=\bigcup_{j=1}^{m}X_{2,j}^{\beta_2}, \ \mbox{and} \ \ X_{2,j}^{\beta_2}=\{x\in X: x \in B_{\ep}(a_j) \ \mbox{and}\ \textrm{orb}(x,T^{n(\beta_2)}) \not\subset B_{\ep}(a_j)\}$).
Then we continue in this manner to discuss $\alpha_3$. Since $d(X)$ is a countable ordinal number,
by finitely many repetitions of this procedure we may conclude that
there must be some $\beta_n<\aleph_0$ and $n(\beta_n) \in \N$
such that $X_{2}^{\beta_n}\cap X^{(\gamma)}=\emptyset$ for any $\beta_n \le \gamma \le \alpha_1+\tilde{n}$. And repeat the argument in~(i-1)
we can find $n \in \N$ and let $\widetilde{T}=T^n$ such that $\widetilde{X}_2 \cap X^{(\gamma)}=\emptyset$ for any $\gamma \le \alpha_1+\tilde{n}$.
Therefore we have $\widetilde{X}_2=\emptyset$.


\smallskip
\textbf{(ii):} $\beta$ is a limit ordinal number:

\smallskip
The proof is similar to the case {\bf (i.2)}.

\medskip
\noindent {\bf Step 3}: $d(X)=\alpha$ and $X^{(\alpha)}= A$, where $\alpha$ is a limit ordinal number and points in $A$ are fixed points.
The proof is similar to the case {\bf (i.2)}.

Now we have proved Claim A.

\section*{Acknowledgments}
We thank J. Auslander for sending us a useful references, and E. Glasner, W.~Huang and S.~Shao for very useful suggestions.

Research of
J. Li was supported by NNSF of China (grant no. 11371339), P.~Oprocha was supported by the European Regional Development
Fund in the IT4Innovations Center of Excellence project (CZ.1.05/1.1.00/02.0070), X.~Ye was supported by NNSF of China (grant no. 11371339 and 11431012) and R. Zhang
was supported by NNSF of China (grant no. 11001071 and 11171320).

\providecommand{\bysame}{\leavevmode\hbox to3em{\hrulefill}\thinspace}
\providecommand{\MR}{\relax\ifhmode\unskip\space\fi MR }
\providecommand{\MRhref}[2]{%
  \href{http://www.ams.org/mathscinet-getitem?mr=#1}{#2}
}
\providecommand{\href}[2]{#2}

\end{document}